\documentclass[10pt]{siamart251216}

\usepackage{geometry,graphicx,amssymb,amsmath,amsfonts,bm,tcolorbox,enumitem,amsbsy}
\usepackage{booktabs,array,multirow,cellspace}
\usepackage[all]{xy}

\usepackage{tikz}
\usetikzlibrary{arrows,shapes}
\usetikzlibrary{decorations.markings,fit}
\usetikzlibrary{calc,3d,decorations.pathmorphing}
\usepackage{tikz-3dplot}
\usepackage{tkz-euclide}
\usepackage{float}
\usetikzlibrary{angles}
\usepackage{pgfplots}
\usepgfplotslibrary{colormaps,patchplots}
\pgfplotsset{compat=1.18}
\usepackage{comment}
\usepackage{tikz}
\usetikzlibrary{arrows,shapes}
\usetikzlibrary{arrows, decorations.markings,fit}
\usetikzlibrary{calc,3d}
\usepackage{tikz-3dplot}
\usepackage{epstopdf}
\usepgfplotslibrary{groupplots}
\usetikzlibrary{matrix, positioning}
\usepackage{xcolor}
\tikzset{
>=stealth',
punkt/.style={
rectangle,
rounded corners,
draw=black, very thick,
text width=6.5em,
minimum height=2em,
text centered},
pil/.style={
->,
thick,
shorten <=2pt,
shorten >=2pt,}
}

\usepackage{subcaption}

\numberwithin{equation}{section}

\allowdisplaybreaks[3]

\newtheorem{example}[theorem]{Example}
\newtheorem{assumption}[theorem]{Assumption}
\newtheorem{remark}[theorem]{Remark}

\usepackage{color}

\newcommand{\R}{\mathbb{R}}
\newcommand{\RRR}{\mathbb{R}}
\newcommand{\bbR}{\mathbb{R}}
\newcommand{\bbN}{\mathbb{N}}

\newcommand{\trialsp}{\calV_{\bfK_{\dive},X}}

\newcommand{\p}{\partial}

\newcommand{\dd}{\mathrm{d}}

\newcommand{\dive}{{\ensuremath\mathop{\mathrm{div}}}}
\newcommand{\curl}{{\ensuremath\mathop{\mathrm{curl}}}}

\newcommand{\supp}{{\mathrm{supp}}}

\newcommand{\bc}{\bm c}

\newcommand{\bff}{\bld{f}}
\newcommand{\bgg}{\bld{g}}

\newcommand{\bu}{\bld{u}}

\newcommand{\bx}{\bld{x}}
\newcommand{\by}{\bld{y}}

\newcommand{\bH}{\bld{H}}

\newcommand{\bfI}{\mathbf{I}}
\newcommand{\bfK}{\mathbf{K}}
\newcommand{\bL}{\bld{L}}

\newcommand{\bW}{\bld{W}}

\newcommand{\bPhi}{\bm \Phi}

\newcommand{\bxi}{\bld{\xi}}

\newcommand{\calB}{\mathcal{B}}

\newcommand{\calD}{\mathcal{D}}

\newcommand{\calF}{\mathcal{F}}
\newcommand{\calH}{\mathcal{H}}
\newcommand{\calI}{\mathcal{I}}

\newcommand{\calN}{\mathcal{N}}
\newcommand{\calO}{\mathcal{O}}

\newcommand{\calV}{\mathcal{V}}

\newcommand{\frE}{\mathfrak{E}}

\newcommand{\Bdel}{\widetilde\calB^{\sigma}_\dive}

\newcommand{\bld}[1]{\boldsymbol{#1}}

\title{Error estimates for vector field interpolation based on generalized matrix-valued kernels\thanks{The first author was supported in part by National Natural Sicence Foundation of China (No. 12571407), Basic Research Program of Jiangsu (No. BK20252037), and a Jiangsu Shuangchuang Team program (No. JSSCTD202449). The third author was supported in part by the General Research Fund (GRF No. 12301520, 12301021, 12300922) of Hong Kong Research Grant Council.}}

\author{
	Zhengjie Sun\thanks{School of Mathematics and Statistics,  Nanjing University of Science and Technology, Nanjing, China (\email{zhengjiesun2020@gmail.com}).}
	\and
	Lishuo Dong\thanks{Corresponding author. School of Mathematics and Statistics,  Nanjing University of Science and Technology, Nanjing, China (\email{dlishuo@163.com}).}
	\and
     Leevan Ling\thanks{Department of Mathematics, Hong Kong Baptist University, Kowloon Tong, Hong Kong (\email{lling@hkbu.edu.hk}).}
}

\headers{Vector field interpolation}{Z. Sun, L. Dong and L. Ling}

\begin{document}
\maketitle
\begin{abstract}
Matrix-valued kernels provide a flexible framework for approximating vector fields from scattered data, especially when structural constraints such as divergence-free or curl-free conditions must be preserved. Classical potential-based constructions enforce these constraints naturally, but they typically require the generating scalar function to possess relatively high smoothness.
We develop an operator-based framework for constructing div-free and curl-free matrix-valued kernels using integral and differential operators, which substantially relaxes the regularity requirements of the potential approach. Using dimension-walking techniques, we show that the resulting native spaces are norm-equivalent to appropriate vector-valued Sobolev spaces. Another main contribution of the paper is a sharp error analysis for the corresponding kernel matrix-valued interpolation problem. We derive direct Sobolev error estimates that allow fractional regularity of the target field, and we establish Bernstein-type inequalities for the associated kernel trial spaces. These results lead to a complete inverse theorem. We also investigate stability by proving lower bounds for the smallest eigenvalues of the interpolation matrices. Numerical experiments are included to verify the theoretical results.
\end{abstract}

\begin{keywords}
    Vector fields; matrix-valued kernel; divergence-free; curl-free; native spaces; dimension-walk
\end{keywords}

\begin{AMS}
	41A25, 41A35, 65D05, 65D12.
\end{AMS}

\section{Introduction}
Matrix-valued kernels are a central tool in approximation theory and machine learning for reconstructing vector-valued functions from scattered data. Among them, \emph{divergence-free} (div-free) kernels are particularly important because many physical fields satisfy conservation laws. 
For instance, incompressible velocity fields with $\nabla \cdot \mathbf{v} = 0$ and magnetic fields with $\nabla \cdot \mathbf{B} = 0$ \cite{Balsara_2001JCP_divergence,Fuselier_2016ComputFluids_high,Ganesh_2011MCoM_pseudospectral,Guzman_2014MCoM_conforming,Neilan_2021SINUM_divergence}. In contrast to scalar radial basis functions (RBFs), which typically approximate each component separately, matrix-valued kernels can encode cross-component correlations and enforce differential constraints directly, which ensures that the resulting approximant inherits key structural properties of the target field. 

A standard construction of div-free and curl-free matrix-valued kernels applies differential operators to a scalar radial potential \cite{Farrell_2017IMAJNA_multilevel,Fuselier_2008Adv_improved,Fuselier_2008MCoM_sobolev,Keim_2016SINUM_high,Lowitzsch_2005AdvCM_matrix,Narcowich_1994MCoM_generalized,Reisert_2007JMLR_Learning,Schraeder_2011MCoM_high,Wendland_2009SINUM_divergence}. Let $\phi \in C^2([0,\infty))$ be radial and define the associated scalar kernel
$\Phi(\bx,\by)=\phi(\|\bx-\by\|_2)$. The div-free and curl-free matrix-valued kernels $\bPhi_{\dive}$ and $\bPhi_{\curl}$ are then given by
\begin{subequations}\label{eq:potential}
    \begin{align}
        \bPhi_{\dive} &:= (-\Delta \mathbf{I} + \nabla\nabla^\top)\phi, \label{eq:potential_op}\\
        \bPhi_{\curl} &:= -\nabla\nabla^\top \phi, \label{eq:curl_op}
    \end{align}
\end{subequations}
where $\Delta$ denotes the Laplacian, $\nabla$ the gradient, and $\bfI$ the identity matrix in $\R^d$. If $\phi$ is positive definite, then both $\bPhi_{\dive}$ and $\bPhi_{\curl}$ are symmetric positive definite; see \cref{def:SPD}. Moreover, the composite kernel
$\bPhi=-\Delta\phi\,\mathbf{I}$
admits the natural decomposition
$\bPhi=\bPhi_{\dive}+\bPhi_{\curl}$,
see \cite{Amodei_1991JAT_vector,Handscomb_1993NumerAlgor_local,Narcowich_1994MCoM_generalized}. Each column of $\bPhi_{\dive}$ and $\bPhi_{\curl}$ is div-free and curl-free respectively.

While the potential-based construction guarantees that the columns of $\bPhi_{\dive}$ are div-free, with an analogous statement holding for $\bPhi_{\curl}$ in the curl-free case, it imposes stringent regularity requirements on the scalar generator $\phi$. Expanding \eqref{eq:potential_op}, the matrix-valued kernel $\bPhi_{\dive}$ takes the form
\begin{equation} \label{eq:kernel_form_derived}
    \bPhi_{\dive}(\bx,\by)
    =
    \alpha_{\phi}(r)\bfI+\beta_{\phi}(r)(\bx-\by)(\bx-\by)^\top,
    \qquad
    r=\|\bx-\by\|_2,
\end{equation}
where the scalar coefficients are given by
\begin{equation} \label{eq:pot_coeffs}
    \alpha_{\phi}(r) = -\phi''(r) - \frac{d-2}{r}\phi'(r),
    \qquad
    \beta_{\phi}(r) = \frac{1}{r^2}\left(\phi''(r)-\frac{\phi'(r)}{r}\right).
\end{equation}
Similarly, $\bPhi_{\curl}(\bx,\by)$ can be expressed in the same general form \eqref{eq:kernel_form_derived}.
Evidently, this potential-based construction presupposes $\phi$ to be at least twice continuously differentiable. In practice, however, generalized interpolation and collocation methods for partial differential equations typically impose much stronger regularity requirements. For instance, the potential formulation used to solve the Stokes problem in \cite{Wendland_2009SINUM_divergence} requires a compactly supported radial basis function $C^8$, since derivatives up to sixth order must be evaluated. Such stringent smoothness conditions substantially narrow the class of admissible basis functions and make it more difficult to characterize the associated native spaces. Moreover, working with high-order differential operators leads to considerable algebraic overhead.

 Motivated by the representation in \eqref{eq:kernel_form_derived}, we therefore consider the broader family of isotropic matrix-valued kernels of the form
\begin{equation}\label{eq:kernel_form}
    \bfK(\bx,\by)=\alpha(r)\bfI+\beta(r)(\bx-\by)(\bx-\by)^\top,
\end{equation}
where $\alpha,\beta:[0,\infty)\to\R$ are scalar functions whose regularity assumptions will be specified later. Using the operator introduced in \cref{def:IntDeriv_Opt}, the coefficient function $\beta_\phi$
in \eqref{eq:pot_coeffs} can be written succinctly as $\beta_\phi=\mathcal D^2\phi$.
More generally, this observation suggests a unified framework for constructing div-free and curl-free matrix-valued kernels by setting 
$$\beta=\calD^k\phi,~~k\in\bbN,$$
with $k$ controlling the smoothness of the resulting native space. This viewpoint avoids repeatedly differentiating a highly regular potential $\phi$ in closed form, provides greater flexibility in the choice of basis functions, and leads to considerably simpler algebraic expressions in subsequent derivations.

Micheli \cite{Micheli_2013_matrix} proposed a general mechanism for generating matrix-valued kernels from scalar precursors, motivated by applications in shape deformation. In that framework, however, both the explicit form of the resulting kernels and the description of the associated reproducing kernel Hilbert spaces (RKHSs) can become algebraically cumbersome. We show that much of this complexity can be avoided by working with the integral and differential operators $\calI$ and $\calD$, introduced by Wu \cite{Wu_1995Adv_compactly} and Wendland \cite{Wendland_1995Adv_piecewise,Wendland_2004book_scattered}, and subsequently extended by Schaback and Wu \cite{Schaback_1996JCAM_operators} to arbitrary orders $\calI^\nu$ and $\calD^\nu$ with $\nu\in\R$. 
This alternative construction relaxes the stringent smoothness requirements inherent in classical potential formulations. Moreover, combined with dimension-walking techniques, it allows us to identify the induced native spaces with vector-valued Sobolev spaces (up to equivalence of norms).

A second main contribution is a comprehensive error analysis for the corresponding matrix-valued kernel interpolation problem. Although direct (a priori) convergence rates are by now classical \cite{Farrell_2017IMAJNA_multilevel,Fuselier_2008MCoM_sobolev, Sun_2022SISC_kernel,Wendland_2009SINUM_divergence}, we extend the theory to target fields with fractional Sobolev regularity. This refinement is crucial for establishing inverse estimates \cite{Fuselier_2009SINUM_stability,Kuenemund_2019NM_high,Narcowich_2017MCoM_novel,Narcowich_2007FoCM_direct,Narcowich_2006ConsApprox_sobolev,Schaback_2002MCoM_inverse,Sun_2024SISC_high}, which remain significantly less developed—especially for matrix-valued kernels and, more generally, for kernel trial spaces restricted to bounded domains. Recent advances include Wenzel’s sharp inverse theorem for kernel interpolation \cite{Wenzel_2025MCoM_sharp} and the Bernstein and Nikolskii inequalities for kernel-based trial spaces on bounded domains and Riemannian manifolds established by Sun and Ling \cite{Sun_2025_inverse}. Building on these results, we prove Bernstein-type inequalities for matrix-valued kernels and derive a full inverse theorem. In addition, we address stability by establishing lower bounds on the smallest eigenvalues of the associated interpolation matrices.

The remainder of the paper is organized as follows. \Cref{sec:prelim} collects preliminaries and review the operators $\calI$ and $\calD$. \Cref{sec:General_framework} develops the construction of div-free and curl-free kernels and provides necessary and sufficient conditions for their existence. \Cref{sec:error} contains the error analysis, including stability results, direct Sobolev error estimates, and inverse theorems. \Cref{sec:Numer_Examp} reports numerical experiments that confirm the theoretical results.

\section{Preliminaries}
\label{sec:prelim}
\subsection{General notation and function spaces}
Throughout this paper, $\|\cdot\|$ denotes the standard Euclidean norm on $\R^d$. For a matrix $A$, we denote its pseudoinverse by $A^+$. For a matrix-valued function or distribution $\mathbf{G}$, we let $\mathbf{G}^*$ denote its conjugate transpose, i.e., $\mathbf{G}^* := \overline{\mathbf{G}}^\top$. 

We assume that the domain $\Omega \subset \R^d$ is bounded and has a Lipschitz boundary. We say that $\Omega$ has a $C^{k,1}$ boundary if it is locally characterized by the graph of a function that is $k-1$ times continuously differentiable with Lipschitz continuous derivatives of order $k$. Let $X = \{\bx_1, \dots, \bx_N\} \subset \Omega$ be a finite set of distinct points. The \emph{mesh norm} (or fill distance) $h_{X,\Omega}$ and the \emph{separation radius} $q_X$ are defined, respectively, as
\begin{equation*}
    h_{X,\Omega} := \sup_{\bx \in \Omega} \min_{\bx_j \in X} \|\bx - \bx_j\|
    \quad \text{and} \quad
    q_X := \frac{1}{2} \min_{j \neq k} \|\bx_j - \bx_k\|.
\end{equation*}
We say that a sequence $\{X_i\}$ is \emph{quasi-uniform} if all the mesh ratio $\rho_{X_i} := h_{X_i,\Omega}/q_{X_i}$ is bounded uniformly with respect to $i$.

We adopt standard notation for scalar function spaces. For a domain $\Omega \subseteq \R^d$, $C^k(\Omega)$ denotes the space of $k$-times continuously differentiable functions, and $L_p(\Omega)$ denotes the standard Lebesgue space equipped with the norm $\|\cdot\|_{L_p(\Omega)}$. For vector-valued functions $\bff: \Omega \to \R^d$, we employ boldface notation for the corresponding spaces; for instance, $\bff \in \bL_p(\Omega)$ implies that each component of $\bff$ belongs to $L_p(\Omega)$.

For a function $g \in L_1(\R^d)$ or a tempered distribution, we adopt the symmetric normalization for the Fourier transform and its inverse:
\begin{equation*}
    \widehat{g}(\boldsymbol{\xi}) := (2\pi)^{-d/2} \int_{\R^d} g(\bx) e^{-i \bx^\top \boldsymbol{\xi}} \, \dd \bx,
    \quad 
    \check{g}(\bx) := (2\pi)^{-d/2} \int_{\R^d} \widehat{g}(\boldsymbol{\xi}) e^{i \boldsymbol{\xi}^\top \bx} \, \dd \boldsymbol{\xi}.
\end{equation*}
If $\mathbf{G}$ is a matrix-valued function, $\widehat{\mathbf{G}}$ is defined component-wise.

Our error estimates rely on Sobolev spaces. For a non-negative integer $k$ and $1 \le p < \infty$, the Sobolev space $W_p^k(\Omega)$ consists of all functions $f \in L_p(\Omega)$ possessing distributional derivatives $D^\alpha f \in L_p(\Omega)$ for all multi-indices $\alpha$ with $|\alpha| \le k$. The norm is defined as
\begin{equation*}
    \|f\|_{W_p^k(\Omega)} := \Big( \sum_{|\alpha| \le k} \|D^\alpha f\|_{L_p(\Omega)}^p \Big)^{1/p}.
\end{equation*}
For $p=\infty$, the norm is given by $\|f\|_{W_\infty^k(\Omega)} := \max_{|\alpha|\le k} \|D^\alpha f\|_{L_\infty(\Omega)}$. We also consider fractional order Sobolev spaces $W_p^\tau(\Omega)$ for non-integer $\tau$, defined via standard interpolation between integer-order spaces. In the Hilbert space setting ($p=2$), we denote $H^\tau(\Omega) := W_2^\tau(\Omega)$ for any $\tau \ge 0$.

Analogous to the Lebesgue spaces, we write $\bff \in \bW_p^\tau(\Omega)$ if every component $f_j$ of the vector-valued function $\bff$ lies in $W_p^\tau(\Omega)$. We equip this space with the norm
\begin{equation*}
    \|\bff\|_{\bW_p^\tau(\Omega)} := 
    \begin{cases} 
        \displaystyle \Big( \sum_{j=1}^d \|f_j\|_{W_p^\tau(\Omega)}^p \Big)^{1/p} & \text{if } 1 \le p < \infty, \\[1em]
        \displaystyle \max_{1 \le j \le d} \|f_j\|_{W_\infty^\tau(\Omega)} & \text{if } p = \infty.
    \end{cases}
\end{equation*}
With these norms, $\bH^\tau(\Omega) := \bW_2^\tau(\Omega)$ is a Hilbert space. When $\Omega=\R^d$, the inner product is characterized via the Fourier transform:
\begin{equation*}
    \langle \bff, \mathbf{g} \rangle_{\bH^\tau(\R^d)} = \int_{\R^d} (1+\|\bxi\|^2)^\tau  \widehat{\bff}(\bxi)^*\, \widehat{\bgg}(\bxi) \, \dd \bxi.
\end{equation*}

We are particularly interested in subspaces consisting of div-free or curl-free functions. A vector field $\bff:\Omega\to\R^d$ is said to be \emph{div-free} if $\nabla \cdot \bff = 0$. For $\tau \ge 0$, we define the closed subspace
\begin{equation*}
    \bH_{\dive}^\tau(\Omega) := \{ \bff \in \bH^\tau(\Omega) : \nabla \cdot \bff = 0 \}.
\end{equation*}
Similarly, we define spaces for curl-free functions. In three dimensions ($d=3$), a vector field $\bff$ is \emph{curl-free} if $\nabla \times \bff = \mathbf{0}$. In two dimensions ($d=2$), we identify the curl with the scalar operator $\nabla \times \bff := \partial_x f_2 - \partial_y f_1$. Accordingly, for $d \in \{2,3\}$, we define
\begin{equation*}
    \bH_{\text{curl}}^\tau(\Omega) := \{ \bff \in \bH^\tau(\Omega) : \nabla \times \bff = 0 \}.
\end{equation*}

We now turn to the definition of a \emph{native space} for matrix-valued kernels. Analogous to the scalar-valued theory, we characterize these native spaces as reproducing kernel Hilbert spaces.

\begin{definition} \label{def:native_space}
Let $\calH$ be a Hilbert space of vector-valued functions $\bff: \Omega\subseteq \R^d \to \R^d$. A continuous matrix-valued kernel $\bfK: \Omega \times \Omega \to \R^{d \times d}$ is called a \emph{reproducing kernel} for $\calH$ if, for all $\bx \in \Omega$ and $\bc \in \R^d$, the following conditions are satisfied:
\begin{enumerate}[label=(\roman*)]
    \item $\bfK(\cdot, \bx)\bc \in \calH$,
    \item $\bc^\top \bff(\bx) = \langle \bff, \bfK(\cdot, \bx)\bc \rangle_{\calH} \quad \text{for all } \bff \in \calH$.
\end{enumerate}
If such a kernel $\bfK$ exists, we refer to $\calH$ as the \emph{native space} associated with $\bfK$ and denote it by $\calN_{\bfK}$.
\end{definition}

We define positive definite scalar functions and matrix-valued kernels as follows.

\begin{definition}\label{def:SPD}
	A function $\phi:\R^d\to\R$ is positive definite if, for all $N\in\bbN$, all pairwise distinct $\bx_1,\ldots,\bx_N\in\R^d$, and all $\bc\in\R^N\backslash\{\bld{0}\}$, the quadratic form $\sum_{j,k=1}^Nc_jc_k\phi(\bx_j-\bx_k)$ is positive. More generally, a matrix-valued kernel $\bPhi:\R^d\to\R^{d\times d}$ is said to be positive definite if it is even $\bPhi(-\bx)=\bPhi(\bx)$, symmetric $\bPhi(\bx)=\bPhi(\bx)^\top$, and satisfies
	$$\sum_{j,k=1}^N\bc_j^\top \bPhi(\bx_j-\bx_k)\bc_k>0,$$
	for all pairwise distinct $\bx_j\in\R^d$ and all coefficient vectors $\bc_j\in\R^d$ that are not all zero.
\end{definition}

\subsection{Operators on radial functions}

It is a classical result (Bochner's theorem and generalized thereof) that a continuous function is strictly positive definite and radial on $\R^d$ if its $d$-variate Fourier transform is non-negative \cite{Fasshauer_2007book_meshfree,Wendland_2004book_scattered}. For a radial functions $\Phi = \phi(\|\cdot\|_2) \in L_1(\R^d)$, the Fourier transform is itself radial, given by $\widehat{\Phi} = \calF_d \phi(\|\cdot\|_2)$, where the $d$-dimensional radial Fourier operator $\calF_d$ is defined as
\begin{equation*}
    \calF_d \phi(r) = r^{-(d-2)/2} \int_0^\infty \phi(t) t^{d/2} J_{(d-2)/2}(rt) \, \dd t.
\end{equation*}
Here, $J_\nu$ denotes the Bessel function of the first kind of order $\nu$. To analyze the relationship between radial Fourier transforms across different ambient dimensions for compactly supported functions, Schaback and Wu \cite{Schaback_1996JCAM_operators} introduce a certain integral operator and its inverse differential operator.

\begin{definition}\label{def:IntDeriv_Opt}
    The operators $\calI$ and $\calD$ are defined as follows:
    \begin{enumerate}[label=(\roman*)]
        \item Let $\phi:[0,\infty)\to\R$ be a function such that the map $t \mapsto t\phi(t)$ belongs to $L_1[0, \infty)$. The integral operator $\calI$ is defined for $r \ge 0$ by
        \begin{equation} \label{eq:def_I}
            (\calI\phi)(r) := \int_{r}^{\infty} t\phi(t) \, \dd t.
        \end{equation}
        \item Let $\phi \in C^2(\R)$ be an even function. The differential operator $\calD$ is defined for $r \ge 0$ by
        \begin{equation} \label{eq:def_D}
            (\calD\phi)(r) := -\frac{1}{r}\phi'(r).
        \end{equation}
    \end{enumerate}
    In both instances, the resulting functions are understood to be extended to $\R$ as even functions.
\end{definition}

The following lemma establishes the inversion properties of these operators and their dimension-walking effect via Hankel-Bessel recursion \cite{Schaback_1996JCAM_operators,Wendland_1995Adv_piecewise}.

\begin{lemma}
\label{lem:operator_properties}
Let $\phi$ be a continuous function. The operators $\calI$ and $\calD$ satisfy the following properties:
\begin{enumerate}[label=(\roman*)] 
    \item \label{item:inversion}
    If the mapping $t \mapsto t\phi(t)$ belongs to $L_1[0, \infty)$, then $\calD\calI\phi = \phi$. Conversely, if $\phi \in C^2(\R)$ is even and $\phi' \in L_1[0, \infty)$, then $\calI\calD\phi = \phi$.

    \item \label{item:fourier_relation}
    The operators relate the radial Fourier transform in dimension $d$ to dimensions $d-2$ and $d+2$. If $t \mapsto \phi(t)t^{d-1} \in L_1[0, \infty)$ and $d \ge 3$, then
    \begin{equation*}
        \calF_d(\phi) = \calF_{d-2}(\calI\phi).
    \end{equation*}
    Furthermore, if $\phi \in C^2(\R)$ is even and $t \mapsto \phi'(t)t^d \in L_1[0, \infty)$, then
    \begin{equation*}
        \calF_d(\phi) = \calF_{d+2}(\calD\phi).
    \end{equation*}
\end{enumerate}
\end{lemma}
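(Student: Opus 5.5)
Part \ref{item:inversion} follows from the fundamental theorem of calculus. For the first claim, $t\phi(t)$ is integrable and $\phi$ is continuous, so $(\calI\phi)(r)=\int_r^\infty t\phi(t)\,\dd t$ is continuously differentiable for $r>0$ with $(\calI\phi)'(r)=-r\phi(r)$. Hence $(\calD\calI\phi)(r)=-\frac1r(-r\phi(r))=\phi(r)$ for $r>0$, and by continuity also at $r=0$.

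For the converse, write $\calI\calD\phi(r)=\int_r^\infty t\cdot(-\phi'(t)/t)\,\dd t=-\int_r^\infty\phi'(t)\,\dd t$. Since $\phi'\in L_1[0,\infty)$, the limit $\phi(\infty):=\lim_{t\to\infty}\phi(t)$ exists, and we get $\calI\calD\phi(r)=\phi(r)-\phi(\infty)$. To conclude we need $\phi(\infty)=0$. I would take this as the implicit decay convention of the lemma (as in Schaback--Wu, where $\phi$ is compactly supported or vanishes at infinity). I would state it explicitly in the proof. This boundary term is the only subtle point of part (i).

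Part \ref{item:fourier_relation} uses the Bessel recurrence $\frac{\dd}{\dd s}\bigl(s^{\nu}J_\nu(s)\bigr)=s^{\nu}J_{\nu-1}(s)$. Set $\nu=(d-2)/2$. By Fubini, which is justified by the integrability hypothesis on $t^{d-1}\phi(t)$ together with $|J_\nu(s)|\lesssim\min(s^\nu,s^{-1/2})$,
\[
\calF_{d-2}(\calI\phi)(r)=r^{-(\nu-1)}\int_0^\infty s^{\nu}J_{\nu-1}(rs)\int_s^\infty t\phi(t)\,\dd t\,\dd s
=r^{-(\nu-1)}\int_0^\infty t\phi(t)\int_0^t s^{\nu}J_{\nu-1}(rs)\,\dd s\,\dd t.
\]
The recurrence gives $\int_0^t s^\nu J_{\nu-1}(rs)\,\dd s=r^{-1}t^\nu J_\nu(rt)$. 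Substituting, we obtain $r^{-\nu}\int_0^\infty\phi(t)t^{\nu+1}J_\nu(rt)\,\dd t=\calF_d\phi(r)$. The case $d-2=1$ is covered by the same formula with $J_{-1/2}$.

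The second identity follows by applying the first one in dimension $d+2$ to $\psi=\calD\phi$. The hypothesis $t^{d}\phi'(t)\in L_1$ is exactly $t^{(d+2)-1}\psi(t)\in L_1$. This gives $\calF_{d+2}(\calD\phi)=\calF_d(\calI\calD\phi)=\calF_d\phi$ by part (i). The same decay-at-infinity caveat applies here. Alternatively, one can integrate by parts directly, using $\frac{\dd}{\dd s}(s^{-\nu}J_\nu(s))=-s^{-\nu}J_{\nu+1}(s)$.

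The main obstacle is justifying the interchange of integrals and the vanishing boundary terms when the integrals are only conditionally convergent. I would first prove both identities for compactly supported $\phi$, where everything is elementary. I would then pass to the general case by truncation and dominated convergence, using the stated weighted $L_1$ hypotheses.
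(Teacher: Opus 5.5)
Your argument is correct and follows the standard proof behind the paper's citation of Schaback--Wu and Wendland: the fundamental theorem of calculus gives (i); the Hankel--Bessel recurrence $\frac{\dd}{\dd s}\bigl(s^{\nu}J_\nu(s)\bigr)=s^{\nu}J_{\nu-1}(s)$ with Fubini gives $\calF_d\phi=\calF_{d-2}(\calI\phi)$; and the $\calD$-identity follows by applying this in dimension $d+2$ to $\calD\phi$ and using $\calI\calD\phi=\phi$. Your decay caveat is a genuine correction to the statement as written, since $\calI\calD\phi=\phi-\lim_{t\to\infty}\phi(t)$ and $\phi\equiv 1$ satisfies every stated hypothesis of the converse in (i) and of the second identity in (ii), yet $\calI\calD\phi=0$ and $\calF_d\phi$ is not even defined; note also that your Fubini step needs no truncation, because for each $r>0$ the inner $s$-integral is $\lesssim r^{-1/2}t^{(d-1)/2}$ for large $t$, leaving a $t$-integrand $\lesssim t^{(d+1)/2}|\phi(t)|\le t^{d-1}|\phi(t)|$ when $d\ge 3$, so the double integral is absolutely convergent under the stated hypotheses.
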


These relations allow us to express the higher-dimensional Fourier transforms of radial functions in terms of lower-dimensional ones, and vice versa.  Since positive definite integrable functions are characterized by a nonnegative Fourier transform, \cref{lem:operator_properties} leads directly to the following results \cite{Wendland_2004book_scattered}.

\begin{lemma}
    Suppose that $\phi$ is continuous. 
        If $t \mapsto \phi(t)t^{d-1} \in L_1[0, \infty)$ and $d \ge 3$, then $\phi$ is positive definite on $\R^d$ if and only if $\calI\phi$ is positive definite on $\R^{d-2}$. 
        Furthermore, if $\phi \in C^2(\R)$ is even and $t \mapsto \phi'(t)t^d \in L_1[0, \infty)$, then $\phi$ is positive definite on $\R^d$ if and only if $\calD\phi$ is positive definite on $\R^{d+2}$.
\end{lemma}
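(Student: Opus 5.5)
The plan is to reduce both equivalences to the Fourier characterization of positive definiteness and then transport the Fourier transform across dimensions with \cref{lem:operator_properties}\ref{item:fourier_relation}. Concretely, I would use the classical Bochner-type criterion for continuous integrable radial functions: $\Phi=\phi(\|\cdot\|)$ is positive definite on $\R^d$ if and only if $\calF_d\phi\ge 0$ and $\calF_d\phi\not\equiv 0$. The nontrivial direction of this criterion is that a nonnegative, nonvanishing transform forces a positive quadratic form. It follows from
\[
\sum_{j,k} c_jc_k\phi(\|\bx_j-\bx_k\|)=(2\pi)^{-d/2}\int_{\R^d}\Big|\sum_j c_j e^{i\bx_j^\top\bxi}\Big|^2\calF_d\phi(\|\bxi\|)\,\dd\bxi ,
\]
together with linear independence of the exponentials on any open set. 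I take this criterion as the cited result from \cite{Wendland_2004book_scattered}.

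For the first claim, I would argue as follows. The hypothesis $t\mapsto\phi(t)t^{d-1}\in L_1[0,\infty)$ says exactly that $\phi(\|\cdot\|)\in L_1(\R^d)$. Next I check that $\calI\phi$ defines an integrable continuous radial function on $\R^{d-2}$. Continuity is clear since the integrand is locally integrable, given $t\phi(t)$ integrable near infinity because $t\le t^{d-1}$ for $t\ge1$. For integrability, Fubini gives
\[
\int_0^\infty |\calI\phi(r)|r^{d-3}\,\dd r\le \int_0^\infty t|\phi(t)|\frac{t^{d-2}}{d-2}\,\dd t<\infty .
\]
\Cref{lem:operator_properties}\ref{item:fourier_relation} then gives $\calF_d\phi=\calF_{d-2}(\calI\phi)$ as functions of $r$. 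Hence nonnegativity and nontriviality of one transform are equivalent to those of the other, and the criterion yields the equivalence in both directions.

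For the second claim, I would set $\psi=\calD\phi$ and reduce to the first claim with $d$ replaced by $d+2$. By \cref{lem:operator_properties}\ref{item:inversion}, $\calI\psi=\calI\calD\phi=\phi$; this requires $\phi'\in L_1$, which follows from the hypothesis on $\phi'(t)t^d$ near infinity together with continuity near $0$, using $\phi(\infty)=0$ implicit in integrability of $\phi$. The integrability condition for $\psi$ on $\R^{d+2}$ is $\psi(t)t^{d+1}=-\phi'(t)t^d\in L_1$, which is exactly the hypothesis. Continuity of $\psi$ at $0$ holds because $\phi$ is even and $C^2$, so $-\phi'(r)/r\to-\phi''(0)$. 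Applying the first claim to $\psi$ in dimension $d+2\ge3$ gives that $\psi$ is positive definite on $\R^{d+2}$ if and only if $\calI\psi=\phi$ is positive definite on $\R^d$.

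The main obstacle is the bookkeeping of integrability hypotheses. Each transform must be a genuine $L_1$ radial transform so that Bochner's criterion applies, and the inversion $\calI\calD\phi=\phi$ needs $\phi$ to vanish at infinity. I would handle the latter by noting that $\phi(\|\cdot\|)\in L_1(\R^d)$ together with $\phi'\in L_1$ near infinity forces the limit $\lim_{t\to\infty}\phi(t)$ to exist, and that limit must be zero.
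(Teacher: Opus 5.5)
Your proof is correct and follows the route the paper indicates: the Bochner-type characterization for continuous integrable radial functions combined with the dimension-walk identities of \cref{lem:operator_properties}. Deriving the $\calD$-statement from the $\calI$-statement via $\psi=\calD\phi$ and $\calI\psi=\phi$ in dimension $d+2$, instead of quoting $\calF_d\phi=\calF_{d+2}(\calD\phi)$ directly, is only a cosmetic variation, since that identity is usually proved in exactly this way.

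One caution. The condition $\lim_{t\to\infty}\phi(t)=0$ is not implied by the hypotheses of the second part; it is a tacit assumption the lemma itself needs. For $\phi(r)=e^{-r^2}-1$ all stated hypotheses hold and $\calD\phi=2e^{-r^2}$ is positive definite on $\R^{d+2}$, yet $\phi(0)=0$. So state this condition explicitly as an added hypothesis rather than calling it implicit. Without it, your reduction actually shows that $\calD\phi$ is positive definite on $\R^{d+2}$ if and only if $\phi-\lim_{t\to\infty}\phi(t)$ is positive definite on $\R^d$.
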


\section{A general framework for constructing matrix-valued kernels}
\label{sec:General_framework}


 \subsection{Matrix-valued kernels} 
While the potential-based differential construction in \eqref{eq:potential} automatically produces div-free or curl-free kernels, it also constrains the coefficient functions $\alpha_\phi$ and $\beta_\phi$ to a particular form. We therefore turn to the more general ansatz \eqref{eq:kernel_form} and derive necessary and sufficient conditions on $\alpha$ and $\beta$ for the resulting matrix-valued kernel to be div-free or curl-free.

\begin{theorem}[div-free condition] \label{thm:div_free}
    Let $\bfK_\dive: \R^d \times \R^d \to \R^{d \times d}$ be an isotropic matrix-valued kernel of the form \eqref{eq:kernel_form}. Then $\bfK_\dive$ satisfies the row-wise div-free condition
    $$
    \nabla_{\bx} \cdot \bfK_\dive(\bx, \by) = \mathbf{0}
    $$
    if and only if the scalar coefficients satisfy the ordinary differential equation
    \begin{equation} \label{eq:ode_condition}
        \alpha'(r) + r^2 \beta'(r) + r(d+1)\beta(r) = 0.
    \end{equation}
\end{theorem}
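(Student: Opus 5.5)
Set $\bld{z}=\bx-\by$ and $r=\|\bld{z}\|$. Since $\bfK_\dive$ depends only on $\bld{z}$, we have $\nabla_{\bx}=\nabla_{\bld z}$. The plan is to compute the row-wise divergence of $\bfK_\dive$ directly in components and reduce it to a scalar function times $\bld z$. For each fixed $i$, the $i$-th component of $\nabla_{\bx}\cdot\bfK_\dive$ is
\[
\sum_{j=1}^d \partial_{z_j}\bigl(\alpha(r)\delta_{ij}+\beta(r)z_iz_j\bigr).
\]
(By symmetry of $\bfK_\dive$, row-wise and column-wise divergence coincide.) Two elementary facts will be used: $\partial_{z_j} r = z_j/r$, and $\partial_{z_j} g(r) = g'(r)z_j/r$ for any differentiable radial profile $g$.

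The computation proceeds term by term. First, $\sum_j\partial_{z_j}(\alpha(r)\delta_{ij})=\alpha'(r)z_i/r$. Second, by the product rule,
\[
\sum_j\partial_{z_j}\bigl(\beta(r)z_iz_j\bigr)=\sum_j\frac{\beta'(r)z_j}{r}z_iz_j+\beta(r)\sum_j\bigl(\delta_{ij}z_j+z_i\bigr)=\beta'(r)\,r\,z_i+(d+1)\beta(r)z_i .
\]
Collecting terms gives
\[
\nabla_{\bx}\cdot\bfK_\dive(\bx,\by)=\frac{\bld z}{r}\Bigl(\alpha'(r)+r^2\beta'(r)+r(d+1)\beta(r)\Bigr).
\]

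Both directions follow from this identity. If \eqref{eq:ode_condition} holds, the divergence vanishes for every $\bld z\neq 0$. Conversely, suppose the divergence vanishes. For any $r>0$, choose some $\bld z$ with $\|\bld z\|=r$; then $\bld z/r\neq 0$, so the scalar factor must vanish at $r$. This gives \eqref{eq:ode_condition} for all $r>0$, and at $r=0$ it follows by continuity.

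The only delicate point is regularity and the origin $\bld z=0$, where $r$ is not differentiable. I would assume $\alpha,\beta$ are differentiable on $(0,\infty)$ and that the relevant quantities extend continuously to $r=0$, for instance with $\alpha$ even and $C^1$, so that $\alpha'(r)/r$ stays bounded. The identity is first derived for $\bld z\neq0$ and then extended to the origin by continuity, or interpreted distributionally since $\{0\}$ has measure zero. Beyond this, the argument is routine calculus.
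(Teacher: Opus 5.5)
Your proposal is correct and is essentially the paper's proof. Both compute the divergence componentwise with the product rule and $\partial r/\partial z_j = z_j/r$, reduce it to $\frac{\bx-\by}{r}\bigl[\alpha'(r)+r^2\beta'(r)+r(d+1)\beta(r)\bigr]$, and read off both directions; your added remarks (row- and column-wise divergence agree by symmetry, and the continuity argument at $r=0$) are small refinements the paper leaves implicit.
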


\begin{proof}
    For the $j$-th component,
    $(\nabla_{\bx} \cdot \bfK_\dive)_j
    =\sum_{i=1}^d \partial_{x_i}(\bfK_\dive)_{ij}$.
    Let $\bu=\bx-\by$. Substituting
    $(\bfK_\dive)_{ij}=\alpha(r)\delta_{ij}+\beta(r)u_i u_j,
    $ and using $
    \partial_{x_i}r=u_i/r,~
    \partial_{x_i}u_j=\delta_{ij}$,
    the derivative of the diagonal term becomes
    $$
    \sum_{i=1}^d \partial_{x_i}\bigl(\alpha(r)\delta_{ij}\bigr)
    =
    \partial_{x_j}\alpha(r)
    =
    \alpha'(r)\frac{u_j}{r}.
    $$
    
    For the rank-one term, the product rule gives
    $$
    \partial_{x_i}\bigl(\beta(r)u_i u_j\bigr)
    =
    \beta'(r)\frac{u_i}{r}u_i u_j
    +
    \beta(r)\delta_{ii}u_j
    +
    \beta(r)u_i\delta_{ij}.
    $$
    Summing over $i$ and using $\sum_{i=1}^d u_i^2=r^2$ and $\sum_{i=1}^d \delta_{ii}=d$, we obtain
    \begin{align*}
        \sum_{i=1}^d \partial_{x_i}\bigl(\beta(r)u_i u_j\bigr)
        &=
        \frac{\beta'(r)u_j}{r}r^2+d\beta(r)u_j+\beta(r)u_j \\
        &=
        \bigl[r\beta'(r)+(d+1)\beta(r)\bigr]u_j.
    \end{align*}
    
    Combining the two contributions yields
    $$
    (\nabla_{\bx}\cdot\bfK_\dive)_j
    =
    \frac{u_j}{r}
    \Bigl[\alpha'(r)+r^2\beta'(r)+r(d+1)\beta(r)\Bigr].
    $$
    Therefore, $\nabla_{\bx}\cdot\bfK_\dive=\mathbf{0}$ for arbitrary $\bu$ if and only if the bracketed term vanishes, which gives \eqref{eq:ode_condition}.
\end{proof}

\begin{remark}
    It is straightforward to check that the coefficients $\alpha_{\phi}$ and $\beta_{\phi}$ in \eqref{eq:pot_coeffs} satisfy \eqref{eq:ode_condition} identically. \cref{thm:div_free}, however, enables a more flexible design principle: rather than starting from a potential $\phi$, one may prescribe the radial function $\beta$ directly and then recover $\alpha$ from \eqref{eq:ode_condition}.
\end{remark}

With the integral operator $\mathcal{I}$ introduced in \cref{def:IntDeriv_Opt}, we obtain the following integral characterization, which enables the direct construction of $\alpha$ from a prescribed $\beta$.

\begin{corollary} \label{cor:alpha_beta_relation}
Under the assumptions of \cref{thm:div_free}, the scalar coefficients $\alpha$ and $\beta$ of a div-free matrix-valued kernel satisfy
\begin{equation} \label{eq:identity}
    \alpha(r) = (d-1)(\mathcal{I}\beta)(r) - r^2\beta(r), \quad r \ge 0,
\end{equation}
provided that $\lim_{r \to \infty} r^2 \beta(r) = 0$.
\end{corollary}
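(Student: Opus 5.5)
We start from the ODE \eqref{eq:ode_condition} of \cref{thm:div_free} and integrate it from $r$ to $\infty$. First we rewrite the $\beta$-terms as an exact derivative plus a remainder that $\calI$ can absorb. Observe that
\begin{equation*}
    \frac{\dd}{\dd r}\bigl(r^2\beta(r)\bigr) = r^2\beta'(r) + 2r\beta(r),
\end{equation*}
so the ODE becomes
\begin{equation*}
    \alpha'(r) + \frac{\dd}{\dd r}\bigl(r^2\beta(r)\bigr) + (d-1)\,r\beta(r) = 0 .
\end{equation*}
By \cref{def:IntDeriv_Opt}, $r\beta(r) = -\frac{\dd}{\dd r}(\calI\beta)(r)$ whenever $t\mapsto t\beta(t)\in L_1[0,\infty)$. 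Equivalently, via \cref{lem:operator_properties}\ref{item:inversion}, $\calD\calI\beta=\beta$. Hence the whole left-hand side is the derivative of
\begin{equation*}
    G(r):=\alpha(r) + r^2\beta(r) - (d-1)(\calI\beta)(r),
\end{equation*}
and the ODE says exactly that $G'\equiv 0$ on $(0,\infty)$. So $G$ is constant, and by continuity this extends to $r\ge 0$.

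Next we identify the constant by letting $r\to\infty$. The hypothesis gives $r^2\beta(r)\to 0$. Integrability of $t\beta(t)$ gives $(\calI\beta)(r)\to 0$ as a tail integral. It remains to know that $\alpha(r)\to 0$, and then the constant is $0$, which is \eqref{eq:identity}.

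The main obstacle is this last step. The ODE by itself only determines $\alpha$ up to an additive constant, and adding $c\,\bfI$ to a div-free kernel keeps it div-free. So the normalization $\alpha(\infty)=0$ must come from the standing setting. The natural source is integrability or decay of the kernel entries, as for the $L_1$ radial functions to which the operators $\calI,\calD$ apply. Equally natural is the compactly supported setting, where $\alpha$, $\beta$ and $\calI\beta$ all vanish for large $r$ and the constant is visibly zero. I would state this explicitly: the integrability of $t\beta(t)$ is needed for $\calI\beta$ to be defined, and $\alpha$ must vanish at infinity. I would also note that, absent the latter, \eqref{eq:identity} holds up to the additive constant $\lim_{r\to\infty}\alpha(r)$.

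A minor technical point concerns $r=0$, where the ODE was derived with the factor $u_j/r$. There we simply use continuity of $\alpha$, $\beta$ and $\calI\beta$ to extend the identity from $r>0$ to $r=0$.
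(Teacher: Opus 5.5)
Your argument is the paper's argument in different packaging: the paper integrates \eqref{eq:ode_condition} over $[r,\infty)$ and integrates $t^2\beta'(t)$ by parts, which is exactly your observation that $\alpha + r^2\beta - (d-1)\calI\beta$ has vanishing derivative. Your caveat about the additive constant is correct and exposes an unjustified step in the paper: its claim that $r^2\beta(r)\to 0$ implies $\alpha(r)\to 0$ does not follow (take $\beta\equiv 0$, $\alpha\equiv 1$, i.e.\ $\bfK=\bfI$, which is div-free yet violates \eqref{eq:identity}), so $\lim_{r\to\infty}\alpha(r)=0$, together with $t\beta(t)\in L_1[0,\infty)$, must indeed be added as a hypothesis, as you propose.
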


\begin{proof}
Integrating the div-free condition \eqref{eq:ode_condition} over $[r,\infty)$ gives
\begin{equation*}
    \int_r^{\infty} \alpha'(t)\,\mathrm{d}t
    +
    \int_r^{\infty}\left(t^2\beta'(t)+t(d+1)\beta(t)\right)\,\mathrm{d}t
    =0.
\end{equation*}
The condition $\lim_{r \to \infty} r^2 \beta(r) = 0$ implies $\lim_{t \to \infty} \alpha(t) = 0$, thus the first integral becomes
$$
    \int_r^{\infty} \alpha'(t)\,\mathrm{d}t = -\alpha(r).
$$
For the second term, integration by parts yields
\begin{equation*}
    \int_r^{\infty} t^2\beta'(t)\,\mathrm{d}t
    =
    \bigl[t^2\beta(t)\bigr]_r^{\infty}
    -
    \int_r^{\infty}2t\beta(t)\,\mathrm{d}t
    =
    -r^2\beta(r)-2\int_r^{\infty}t\beta(t)\,\mathrm{d}t,
\end{equation*}
where the boundary term at infinity vanishes by the assumption $\lim_{r\to\infty} r^2\beta(r)=0$. Substituting this identity into the integrated equation gives
\begin{equation*}
    -\alpha(r)-r^2\beta(r)+(d-1)\int_r^{\infty} t\beta(t)\,\mathrm{d}t=0.
\end{equation*}
Rearranging, we obtain
$$
    \alpha(r)=(d-1)\int_r^{\infty} t\beta(t)\,\mathrm{d}t-r^2\beta(r).
$$
By \cref{def:IntDeriv_Opt}, we prove \eqref{eq:identity}.
\end{proof}

\begin{remark}
\cref{cor:alpha_beta_relation} yields a constructive kernel-design strategy that bypasses explicit differentiation of a scalar potential. One may prescribe the radial profile $\beta(r)$, for instance, a Gaussian, Mat\'ern, or compactly supported function, compute $\mathcal{I}\beta$ and then recover $\alpha(r)$ from \eqref{eq:identity}. Compared with the potential-based (differential-operator) construction, this route is typically simpler to implement, but it enforces only the div-free constraint. Positive definiteness of the resulting matrix-valued kernel requires an additional spectral condition on $\beta$, namely
$\calD^2\calF_d\beta(\omega)>0$,
this in turn amounts to $\beta$ being positive definite in a sufficiently high dimension. Concrete realizations of this principle are provided in \cref{exam1} and~\cref{exam2}.
\end{remark}

Analogously to div-free kernels, we obtain a simple necessary and sufficient condition for kernels of the form \eqref{eq:kernel_form} to be curl-free.
\begin{theorem}[Curl-free condition] \label{thm:curl_free}
    Let $\bfK_\curl: \R^d \times \R^d \to \R^{d \times d}$ be an isotropic matrix-valued kernel of the form \eqref{eq:kernel_form}, where $\alpha$ and $\beta$ satisfy the assumptions of \cref{lem:operator_properties}. Then $\bfK_\curl$ is curl-free if and only if
    \begin{equation} \label{eq:curl_ode_condition}
        \beta(r) = -\calD\alpha(r),
        \quad \text{or equivalently,} \quad
        \alpha(r) = -\calI\beta(r).
    \end{equation}
\end{theorem}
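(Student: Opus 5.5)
I would mirror the proof of \cref{thm:div_free} and compute the curl of each column directly. Fix $\by$, write $\bu=\bx-\by$, $r=\|\bu\|$, so that $(\bfK_\curl)_{ij}=\alpha(r)\delta_{ij}+\beta(r)u_iu_j$. The $j$-th column is $\bff^{(j)}(\bx)=\alpha(r)\mathbf{e}_j+\beta(r)u_j\bu$. For $d\in\{2,3\}$, curl-freeness of a vector field is equivalent to symmetry of its Jacobian, i.e.\ $\partial_{x_k}f_i=\partial_{x_i}f_k$ for all $i,k$. I would therefore work with the antisymmetric part of the Jacobian. This handles $d=2$ and $d=3$ uniformly and also gives a natural definition of curl-free in general $d$.

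The key computation uses $\partial_{x_k}r=u_k/r$ and $\partial_{x_k}u_i=\delta_{ik}$:
\[
\partial_{x_k}f^{(j)}_i=\alpha'(r)\frac{u_k}{r}\delta_{ij}+\beta'(r)\frac{u_k}{r}u_ju_i+\beta(r)\bigl(\delta_{jk}u_i+u_j\delta_{ik}\bigr).
\]
Subtracting the same expression with $i$ and $k$ swapped, the $\beta'$ term and the $u_j\delta_{ik}$ term are symmetric in $(i,k)$ and cancel. What remains is
\[
\partial_{x_k}f^{(j)}_i-\partial_{x_i}f^{(j)}_k=\Bigl(\frac{\alpha'(r)}{r}+\beta(r)\Bigr)\bigl(u_k\delta_{ij}-u_i\delta_{kj}\bigr).
\]

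For the equivalence, the tensor $u_k\delta_{ij}-u_i\delta_{kj}$ is nonzero whenever $\bu\neq\mathbf 0$. For example, take $k\ne j=i$ with $u_k\neq0$, or note that for a fixed $\bu\ne 0$ some such choice of indices works. Hence every column is curl-free for all $\bx\neq\by$ if and only if $\alpha'(r)/r+\beta(r)=0$ for all $r>0$, that is, $\beta=-\calD\alpha$ by \eqref{eq:def_D}. Continuity then extends the identity to $r=0$, where the evenness and $C^2$ assumptions make $\alpha'(r)/r$ have a finite limit.

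The equivalent form $\alpha=-\calI\beta$ follows from \cref{lem:operator_properties}\ref{item:inversion}. Applying $\calI$ to $\beta=-\calD\alpha$ gives $\calI\beta=-\calI\calD\alpha=-\alpha$, using the hypotheses $\alpha$ even, $C^2$, and $\alpha'\in L_1$. Conversely, if $\alpha=-\calI\beta$, then $\calD\alpha=-\calD\calI\beta=-\beta$, using the hypothesis $t\beta(t)\in L_1$.

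The main obstacle I expect is not the algebra but the bookkeeping at $r=0$ and the correct meaning of the hypotheses. The inversion $\calI\calD\alpha=\alpha$ implicitly requires $\alpha(r)\to0$ as $r\to\infty$, which is exactly the normalization that fixes the integration constant. I would state this explicitly, as was done for $r^2\beta\to0$ in \cref{cor:alpha_beta_relation}.
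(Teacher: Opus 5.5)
\textbf{Sign error in the key identity.} Your route is the paper's route: the antisymmetric part of the Jacobian of each column. However, the central display is wrong by a sign, and a second sign slip hides it. In $\partial_{x_k}f^{(j)}_i-\partial_{x_i}f^{(j)}_k$, the $\beta$-terms that survive are $\beta(r)\delta_{jk}u_i$ from the first derivative and $-\beta(r)\delta_{ij}u_k$ from the second. That is,
\[
\beta(r)\bigl(\delta_{jk}u_i-\delta_{ij}u_k\bigr)=-\beta(r)\bigl(u_k\delta_{ij}-u_i\delta_{kj}\bigr),
\]
so the correct identity is
\[
\partial_{x_k}f^{(j)}_i-\partial_{x_i}f^{(j)}_k=\Bigl(\frac{\alpha'(r)}{r}-\beta(r)\Bigr)\bigl(u_k\delta_{ij}-u_i\delta_{kj}\bigr).
\]
This is exactly the paper's formula after renaming indices.

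\textbf{Second, compensating slip.} Your translation step is also off. Since $\calD\alpha=-\alpha'/r$, the condition $\alpha'/r+\beta=0$ that you derived means $\beta=\calD\alpha$, not $\beta=-\calD\alpha$. The two errors cancel, so your final statement agrees with the theorem. As written, though, both steps are false, and correcting either one alone would give the opposite-sign condition.

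\textbf{A quick check.} Take the gradient kernel \eqref{eq:curl_op}. Each column is $-\nabla(\partial_j\phi)$, so it is certainly curl-free. For it, $\alpha=-\phi'/r$ and $\beta=-r^{-2}(\phi''-\phi'/r)$, hence $\alpha'/r=\beta$. Your bracket would then equal $2\beta\neq 0$ and declare this kernel not curl-free. With the sign fixed, the vanishing of $\alpha'/r-\beta$ gives $\beta=\alpha'/r=-\calD\alpha$ directly.

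\textbf{The rest of the proposal is sound, and in places more careful than the paper.} This covers the nonvanishing of $u_k\delta_{ij}-u_i\delta_{kj}$ for $\bu\neq\mathbf 0$, the continuity argument at $r=0$, and the passage to $\alpha=-\calI\beta$ via \cref{lem:operator_properties}.

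\textbf{Your normalization remark is correct.} You note that $\calI\calD\alpha=\alpha$ requires $\alpha(r)\to 0$. The pair $\alpha\equiv 1$, $\beta\equiv 0$ satisfies $\beta=-\calD\alpha$ but not $\alpha=-\calI\beta$. So the \emph{equivalently} clause really does need this normalization. It is automatic if one also imposes the hypothesis $t\alpha(t)\in L_1[0,\infty)$ from \cref{lem:operator_properties}.
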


\begin{proof}
    Let $\bu=\bx-\by$. The curl-free condition means that each column of $\bfK_\curl$ is an irrotational vector field. Equivalently, for every fixed column index $k$, one has
    $$
    \partial_{x_j}(\bfK_\curl)_{ik}
    =
    \partial_{x_i}(\bfK_\curl)_{jk}
    \qquad
    \text{for all } i,j,k.
    $$
    Differentiating the kernel entry $(\bfK_\curl)_{ik} = \alpha(r)\delta_{ik} + \beta(r)u_i u_k$ with respect to $x_j$, and using $\p_{x_j} r = u_j/r$ and $\p_{x_j} u_i = \delta_{ij}$, we obtain
    $$ \p_{x_j} (\bfK_\curl)_{ik} = \alpha'(r)\frac{u_j}{r}\delta_{ik} + \beta'(r)\frac{u_j}{r}u_i u_k + \beta(r)(\delta_{ij}u_k + u_i\delta_{jk}). $$
    Interchanging the indices $i$ and $j$, we likewise obtain
    $$ \p_{x_i} (\bfK_\curl)_{jk} = \alpha'(r)\frac{u_i}{r}\delta_{jk} + \beta'(r)\frac{u_i}{r}u_j u_k + \beta(r)(\delta_{ji}u_k + u_j\delta_{ik}). $$
    Subtracting the two expressions, the terms involving $\beta'(r)$ and $\beta(r)\delta_{ij}u_k$ cancel, and hence
    \begin{align*}
        \p_{x_j} (\bfK_\curl)_{ik} - \p_{x_i} (\bfK_\curl)_{jk} 
        &= \frac{\alpha'(r)}{r} (u_j \delta_{ik} - u_i \delta_{jk}) + \beta(r) (u_i \delta_{jk} - u_j \delta_{ik}) \\
        &= \left[ \frac{\alpha'(r)}{r} - \beta(r) \right] (u_j \delta_{ik} - u_i \delta_{jk}).
    \end{align*}
    For the symmetry condition to hold for arbitrary $\bld{u}$ and indices, the term in the brackets must vanish, yielding \eqref{eq:curl_ode_condition}.
\end{proof}

\subsection{Positive definiteness} 

To carry out the analysis, we pick $\beta = \calD^k \phi$ with $k\in\bbN$ and $\phi$ be some commonly used symmetric positive definite kernels.  The parameter $k$ can, in fact, be extended to arbitrary real values via the fractional operators $\calD^\nu$ ($\nu\in\R$) introduced in \cite{Schaback_1996JCAM_operators}. With the isotropic ansatz \eqref{eq:kernel_form}, the radial coefficients for the div-free kernel $\bfK_\dive$ and the curl-free kernel $\bfK_\curl$ take the form
	\begin{equation}\label{eq:coeff_div}
		\alpha_\dive = (d-1)\calD^{k-1} \phi - r^2\calD^k \phi, \quad \beta_\dive = \calD^k \phi,
	\end{equation}
	and 
	\begin{equation}\label{eq:coeff_curl}
		\alpha_\curl = \calD^{k-1}\phi, \quad \beta_\curl = -\calD^k\phi,
	\end{equation}
	respectively.  The full matrix-valued kernel is then obtained by superposition,
	\begin{equation}\label{eq:combined_kernel}
		\bfK(\bx, \by) = \bfK_\dive(\bx, \by) + \bfK_\curl(\bx, \by), \quad \bx,\by\in\bbR^d.
	\end{equation}
	This choice yields an orthogonal splitting of the native space (\cref{def:native_space}) into div-free and curl-free subspaces, mirroring the classical Helmholtz decomposition for $L_2$ vector fields. As a result, an interpolant built from $\bfK$ automatically decomposes the target field into its solenoidal and potential components.

Because $\bfK(\bx, \by)$ is translation invariant, it is convenient to regard it as a single-variable kernel by setting $\bfK(\bx) := \bfK(\bx, \mathbf{0})$. We analyze its properties via the (matrix-valued) Fourier transform
\begin{equation}\label{eq:Combined_kernel}
	\widehat{\bfK}(\bxi) := (2\pi)^{-d/2}\int_{\bbR^d} \bfK(\bx) e^{-i \bxi^\top \bx}\dd\bx, \quad \bxi \in \bbR^d.
\end{equation}
The div-free constraint $\nabla \cdot \bfK_\dive = \mathbf{0}$ enforces that the corresponding Fourier symbol acts only on directions orthogonal to $\bxi$ (equivalently, it annihilates the component parallel to $\bxi$). Combined with the rotational covariance implied by radial symmetry, this leads to the structural properties stated next.

\begin{lemma}
	\label{lem:general_spectral_relation}
	Let $\phi:[0,\infty)\to\R$ satisfy the assumptions of \cref{lem:operator_properties} such that $\calD^k\phi$ is well defined with $d+4-2k\geq 1$. Let $\bfK$ be defined by \eqref{eq:combined_kernel}. Then, with $\omega=\|\bxi\|$, the Fourier transforms $\widehat \bfK_\dive(\bxi)$ and $\widehat \bfK_\curl(\bxi)$ admit the representations
	\begin{equation}\label{eq:symbol_general}
		\widehat \bfK_\dive(\bxi)=\calF_{d+4-2k}\phi(\omega)\big(\omega^2\bfI-\bxi\bxi^{\top}\big), 
		\quad
		\widehat \bfK_\curl(\bxi)=\calF_{d+4-2k}\phi(\omega)\bxi\bxi^{\top},
	\end{equation}
	where $\calF_{d+4-2k}\phi$ denotes the $(d+4-2k)$-dimensional radial Fourier transform of $\phi$. In particular,
	$\widehat{\bfK}(\bxi)=\omega^2\calF_{d+4-2k}\phi(\omega)\bfI$.
\end{lemma}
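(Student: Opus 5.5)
The plan is to write both kernels as second-order differential operators applied to a single radial scalar function and then read off the Fourier symbols. Set $\psi:=\calD^{k-2}\phi$, viewed as a radial function $\Psi=\psi(\|\cdot\|)$ on $\R^d$.

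\textbf{Step 1 (differential representation).} Since $\calD\psi=\calD^{k-1}\phi$ and $\calD^2\psi=\calD^k\phi$, comparison with \eqref{eq:pot_coeffs} gives
\[
\beta_\phi=\mathcal D^2\phi,\qquad \alpha_\phi=-\phi''-\tfrac{d-2}{r}\phi'=(d-1)\calD\phi-r^2\calD^2\phi .
\]
The second identity follows from $\phi'=-r\calD\phi$ and $\phi''=-\calD\phi+r^2\calD^2\phi$. Applying these with $\phi$ replaced by $\psi$, the coefficients \eqref{eq:coeff_div} become $\alpha_\dive=\alpha_\psi$ and $\beta_\dive=\beta_\psi$. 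Hence
\[
\bfK_\dive=(-\Delta\bfI+\nabla\nabla^\top)\Psi .
\]
For the curl-free kernel, the radial Hessian is $\nabla\nabla^\top\Psi=\tfrac{\psi'}{r}\bfI+\beta_\psi\,\bu\bu^\top=-\calD\psi\,\bfI+\calD^2\psi\,\bu\bu^\top$. Therefore \eqref{eq:coeff_curl} gives
\[
\bfK_\curl=-\nabla\nabla^\top\Psi .
\]

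\textbf{Step 2 (Fourier transform).} Taking Fourier transforms, derivatives become multiplication: $\widehat{\partial_i\partial_j\Psi}=-\xi_i\xi_j\widehat\Psi$. This gives
\[
\widehat\bfK_\dive=(\omega^2\bfI-\bxi\bxi^\top)\widehat\Psi(\bxi),\qquad \widehat\bfK_\curl=\bxi\bxi^\top\widehat\Psi(\bxi).
\]
It remains to identify $\widehat\Psi(\bxi)=\calF_d(\calD^{k-2}\phi)(\omega)$. By \cref{lem:operator_properties}\ref{item:fourier_relation}, each application of $\calD$ raises the dimension by two. Read backwards, this gives
\[
\calF_d(\calD^{k-2}\phi)=\calF_{d-2}(\calD^{k-3}\phi)=\dots=\calF_{d+4-2k}(\phi).
\]
When $k<2$, the operator $\calD^{k-2}$ is to be read as $\calI^{2-k}$, and the integral half of the same lemma applies. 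The hypothesis $d+4-2k\ge1$ ensures that every intermediate dimension is at least $1$. Adding the two symbols gives $\widehat\bfK=\omega^2\calF_{d+4-2k}\phi\,\bfI$.

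\textbf{Main obstacle.} The difficulty is justifying the Fourier calculus when $\Psi$ is not smooth or integrable. $\Psi$ may only be $C^2$ away from the origin, or it may lack enough decay for the classical transform to exist. I would handle this in the sense of tempered distributions. Differentiation rules hold there unconditionally. The dimension-walk identities of \cref{lem:operator_properties} need integrability at each step, which is presumably part of the assumption that $\calD^k\phi$ is well defined. I would also check that the identity $\alpha_\phi=(d-1)\calD\phi-r^2\calD^2\phi$ holds at $r=0$ by continuity, since $\psi$ is even.
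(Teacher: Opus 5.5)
Your proof is correct, but it runs in the opposite direction from the paper's. The paper never introduces a potential. It sets $g=\calF_d(\calD^k\phi)$ and transforms each piece of the ansatz on the frequency side. Multiplication by $x_ix_j$ becomes $-\partial_{\xi_i}\partial_{\xi_j}$ applied to $g$. The transforms of $\calI\beta$ and $r^2\beta$ are $\calD g$ and $-\Delta g$. The radial Hessian and Laplacian of $g$ are then expanded in terms of $\calD g$ and $\calD^2 g$. Everything collapses to $\calD^2 g\,(\omega^2\bfI-\bxi\bxi^\top)$ and $\calD^2 g\,\bxi\bxi^\top$, and the proof ends with $\calD^2\calF_d(\calD^k\phi)=\calF_{d+4}(\calD^k\phi)=\calF_{d+4-2k}\phi$. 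You instead prove in physical space that $\bfK_\dive=(-\Delta\bfI+\nabla\nabla^\top)\Psi$ and $\bfK_\curl=-\nabla\nabla^\top\Psi$ with $\Psi=\calD^{k-2}\phi$. The symbols then follow from the elementary rule $\widehat{\partial_i\partial_j\Psi}=-\xi_i\xi_j\widehat\Psi$. Your dimension walk uses only the physical-side identities actually stated in \cref{lem:operator_properties}. The paper's route, by contrast, tacitly relies on the frequency-side identity $\calD\calF_d=\calF_{d+2}$, a Bessel recurrence not listed there. Your route also establishes the potential representation \eqref{eq:dive_ker_diff}. The paper later cites that representation as a consequence of this lemma, to apply Fuselier's eigenvalue bound in \cref{thm:min_eig}, but its own proof of the lemma never derives it. The price is that you must make sense of $\Psi$ as a $C^2$, tempered radial function on $\R^d$; for $k<2$ this is $\calI^{2-k}\phi$. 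In effect you reintroduce, as an analytic device, the potential that the paper's construction is designed to avoid, whereas the paper manipulates only $\beta$. The integrability bookkeeping you flag is of comparable order in both arguments, and the paper leaves it implicit as well.
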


\begin{proof}
	Let $g(\omega)=\calF_d(\beta)(\omega)=\calF_d(\calD^k\phi)(\omega)$,
	we first compute the Fourier transform of the term $\beta(\cdot)\bx\bx^\top$. Since $g$ is radial, its Hessian satisfies
	\begin{equation*}
		\nabla_{\bxi}\nabla_{\bxi}^\top g(\omega)
		=
		\frac{g'(\omega)}{\omega}\bfI
		+
		\left(
		g''(\omega)-\frac{g'(\omega)}{\omega}
		\right)\frac{\bxi\bxi^\top}{\omega^2}
		=
		-\calD g(\omega)\bfI+\calD^2 g(\omega)\bxi\bxi^\top.
	\end{equation*}
	Hence,
	\begin{equation}\label{eq:beta_term_op}
		\calF_d\big(\beta(\cdot)\bx\bx^\top\big)(\bxi)
		=
		-\nabla_{\bxi}\nabla_{\bxi}^\top g(\omega)
		=
		\calD g(\omega)\bfI-\calD^2 g(\omega)\bxi\bxi^\top.
	\end{equation}

	Next, we compute the Fourier transforms of the scalar coefficients. Using the identities $$\calF_d(\calI\beta)=\calD\calF_d(\beta)=\calD g,~~\calF_d(r^2\beta)=-\Delta \calF_d(\beta)=-\Delta g,$$
	and
	$\Delta g=\omega^2\calD^2 g-d\calD g$,
	we obtain for the div-free kernel,
	\begin{align*}
		\calF_d(\alpha_\dive)
		&=(d-1)\calF_d(\calI\beta_\dive)-\calF_d(r^2\beta_\dive) \\
		&=(d-1)\calD g-\bigl(d\calD g-\omega^2\calD^2 g\bigr) \\
		&=\omega^2\calD^2 g-\calD g.
	\end{align*}
	Combining this with \eqref{eq:beta_term_op} gives
	\begin{align*}
		\widehat \bfK_\dive(\bxi)
		&=\calF_d(\alpha_\dive)\bfI+\calF_d\big(\beta_\dive(\cdot)\bx\bx^\top\big)(\bxi) \\
		&=\bigl[\omega^2\calD^2 g-\calD g\bigr]\bfI
		+\bigl[\calD g\bfI-\calD^2 g\bxi\bxi^\top\bigr] \\
		&=\calD^2 g\bigl(\omega^2\bfI-\bxi\bxi^\top\bigr).
	\end{align*}

	For the curl-free kernel, $\alpha_\curl=\calI\beta_\curl$, and therefore
	$\calF_d(\alpha_\curl)=\calD g$.
	Hence,
	\begin{align*}
		\widehat \bfK_\curl(\bxi)
		&=\calF_d(\alpha_\curl)\bfI-\calF_d\big(\beta_\curl(\cdot)\bx\bx^\top\big)(\bxi) \\
		&=\calD g\bfI-\bigl[\calD g\bfI-\calD^2 g\bxi\bxi^\top\bigr]
		=\calD^2 g\bxi\bxi^\top.
	\end{align*}

	Finally, by the dimension-walk relation,
	$$
		\calD^2\calF_d(\calD^k\phi)=\calF_{d+4}(\calD^k\phi)=\calF_{d+4-2k}\phi.
	$$
	Substituting this into the preceding identities yields \eqref{eq:symbol_general}. Summing the two expressions in \eqref{eq:symbol_general} gives
	$\widehat{\bfK}(\bxi)=\omega^2\calF_{d+4-2k}\phi(\omega)\bfI$.
\end{proof}

By the preceding theorem, if $\phi$ is positive definite on $\R^{d+4-2k}$, then the associated matrix-valued kernel $\mathbf K$ is positive definite on $\R^d$. The same implication holds for the div-free and curl-free components, $\mathbf K_{\dive}$ and $\mathbf K_{\curl}$, when restricted to their natural subspaces. The classical potential-based construction \cite{Fuselier_2008Adv_improved,Fuselier_2008MCoM_sobolev,Lowitzsch_2005AdvCM_matrix,Wendland_2009SINUM_divergence} in \eqref{eq:potential} is recovered as the special case $k=2$.

Guided by \cref{cor:alpha_beta_relation}, we now describe two particularly simple recipes for div-free kernels; the curl-free counterparts follow by similar arguments. Compared with the scalar potential approach, both constructions reduce the number of derivatives required of the underlying scalar kernel.

\begin{example}[$k=0$]
\label{exam1}
Choose $\beta_{0}(r)=\phi(r)$ in \eqref{eq:identity}. Then
\begin{equation}\label{eq:const_1}
    \alpha_{0}(r)=(d-1)(\mathcal I\phi)(r)-r^2\phi(r).
\end{equation}
If $\phi$ is positive definite on $\R^{d+4}$, then $\mathbf K_{\dive}$ is positive definite on $\R^d$. A notable advantage of this method is that it only requires $\phi$ to be continuous and integrable; no higher-order differentiability is needed.
\end{example}

\begin{example}[$k=1$]
\label{exam2}
Choose $\beta_{1}(r)=\mathcal D\phi(r)$ in \eqref{eq:identity}. Then
\begin{equation}\label{eq:const_2}
    \alpha_{1}(r)=(d-1)\phi(r)-r^2(\mathcal D\phi)(r).
\end{equation}
If $\phi$ is positive definite on $\R^{d+2}$, then $\mathbf K_{\dive}$ is positive definite on $\R^d$.
\end{example}

\subsection{Native space}
We next study the native space $\mathcal{N}_\bfK$ induced by the matrix-valued kernel $\bfK$.
A key aspect of the construction in \cref{cor:alpha_beta_relation} is its smoothing property: the resulting matrix-valued kernel can have higher Sobolev regularity than the scalar kernel from which it is derived. We show, however, that this gain in regularity is accompanied by a trade-off: it requires the underlying scalar kernel to be positive definite in a higher ambient dimension. 
\begin{theorem} \label{thm:native_norm_general}
Let $\phi(r)$ be an even, scalar radial function such that $\calD^k\phi$ is positive definite on $\R^{d+4-2k}$. Under the conditions of  \cref{lem:general_spectral_relation}, the native space
of the matrix-valued kernel $\bfK$ is given by
$$\calN_{\bfK}(\R^d)=\left\{\bff\in\bL_2(\R^d)\cap\bld{C}(\R^d):\|\bff\|_{\calN_{\bfK}}<\infty\right\},$$
equipped with the norm
$$
\|\bff\|_{\calN_{\bfK}}^2 = (2\pi)^{-d/2}\int_{\R^d} \frac{\|\widehat\bff(\bxi)\|^2}{\|\bxi\|^2\, \calF_{d+4-2k}\phi(\|\bxi\|)} \dd\bxi.
$$
Furthermore, the native space admits the orthogonal decomposition $\calN_{\bfK}(\R^d) = \calN_{\bfK_{\dive}}(\R^d) \oplus \calN_{\bfK_{\curl}}(\R^d)$, where the subspaces are explicitly characterized by
\begin{align}
	\calN_{\bfK_{\dive}}(\R^d) &= \left\{ \bff \in \calN_{\bfK}(\R^d) : \nabla \cdot \bff = 0 \right\}, \\
	\calN_{\bfK_{\curl}}(\R^d) &= \left\{ \bff \in \calN_{\bfK}(\R^d) : \nabla \times \bff = \bld{0} \right\}.
\end{align}
\end{theorem}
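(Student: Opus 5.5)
The plan is to follow the standard Fourier characterization of native spaces for translation-invariant positive definite matrix-valued kernels, as in the scalar theory of \cite{Wendland_2004book_scattered}. By \cref{lem:general_spectral_relation}, $\widehat{\bfK}(\bxi)=\omega^2\calF_{d+4-2k}\phi(\omega)\bfI$. The hypothesis that $\calD^k\phi$ is positive definite on $\R^{d+4-2k}$ will be combined with the dimension walk of \cref{lem:operator_properties} to conclude $\calF_{d+4-2k}\phi>0$, so the symbol is a positive scalar multiple of the identity away from $\bxi=\bld{0}$. Define the candidate space $\calH$ as in the statement, with inner product
$$\langle\bff,\bgg\rangle_{\calH}=(2\pi)^{-d/2}\int_{\R^d}\frac{\widehat\bff(\bxi)^*\widehat\bgg(\bxi)}{\|\bxi\|^2\calF_{d+4-2k}\phi(\|\bxi\|)}\,\dd\bxi .$$
I will then check the two conditions of \cref{def:native_space}; uniqueness of the reproducing kernel Hilbert space then gives $\calN_{\bfK}=\calH$.

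\textbf{Completeness.} Completeness of $\calH$ follows as in the scalar case. The map $\bff\mapsto\widehat\bff/\sqrt{\|\bxi\|^2\calF_{d+4-2k}\phi}$ is an isometry into $\bL_2(\R^d)$. Since $\widehat{\bfK}\in\bL_1$, Fourier inversion embeds $\calH$ continuously into $\bld C(\R^d)$. Together these show that limits of Cauchy sequences stay continuous and square integrable.

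\textbf{Reproduction.} For (i), the Fourier transform of $\bfK(\cdot-\bx)\bc$ is $e^{-i\bx^\top\bxi}\widehat\bfK(\bxi)\bc$. Its $\calH$-norm squared is $(2\pi)^{-d/2}\int \omega^2\calF_{d+4-2k}\phi\,\|\bc\|^2\,\dd\bxi$, which is finite because $\widehat\bfK$ is integrable. Here I expect to need integrability of $\omega^2\calF_{d+4-2k}\phi$, which should follow from continuity of $\bfK$ at the origin together with positivity of the symbol. For (ii), the isotropic form of $\widehat\bfK$ cancels the weight, leaving
$$\langle\bff,\bfK(\cdot-\bx)\bc\rangle_{\calH}=(2\pi)^{-d/2}\int\widehat\bff(\bxi)^* e^{-i\bx^\top\bxi}\bc\,\dd\bxi=\bc^\top\bff(\bx)$$
by Fourier inversion. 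Inversion is justified because Cauchy--Schwarz shows $\widehat\bff\in\bL_1$. The normalization constants should be tracked carefully to match the factor $(2\pi)^{-d/2}$ in the stated norm.

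\textbf{Decomposition.} Set $P(\bxi)=\bxi\bxi^\top/\omega^2$. Then $\widehat\bfK_\dive=\widehat\bfK(\bfI-P)$ and $\widehat\bfK_\curl=\widehat\bfK P$ by \eqref{eq:symbol_general}. Any $\bff\in\calH$ splits as $\bff=\bff_\dive+\bff_\curl$ with $\widehat\bff_\dive=(\bfI-P)\widehat\bff$ and $\widehat\bff_\curl=P\widehat\bff$. Both pieces lie in $\calH$, and they are orthogonal since $P(\bfI-P)=0$ pointwise and the weight is scalar. In Fourier space, the condition $\nabla\cdot\bff=0$ reads $\bxi^\top\widehat\bff=0$, i.e.\ $P\widehat\bff=0$. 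Similarly, $\nabla\times\bff=\bld0$ (for $d\in\{2,3\}$) reads $\bxi\times\widehat\bff=0$, i.e.\ $(\bfI-P)\widehat\bff=0$. This identifies the two closed subspaces. Repeating the reproduction computation with $\widehat\bfK_\dive$ shows that $\bfK_\dive(\cdot,\bx)\bc$ lies in the div-free subspace and reproduces $\bc^\top\bff(\bx)$ for div-free $\bff$, because $(\bfI-P)\widehat\bff=\widehat\bff$ there; the curl-free case is analogous. Hence each subspace is the native space of the corresponding kernel.

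\textbf{Main obstacle.} The hard step is the low-frequency behaviour. The weight $1/(\|\bxi\|^2\calF_{d+4-2k}\phi)$ blows up at the origin, so membership in $\calH$ imposes a decay condition on $\widehat\bff$ near $\bld0$. The projector $P$ is also discontinuous at $\bld0$. I would handle both by noting that $\{\bld 0\}$ is a null set, and by verifying through dominated convergence that the inversion and splitting arguments remain valid in this region.
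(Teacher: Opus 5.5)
Your argument is correct in substance, but it takes a more self-contained route than the paper. The paper never checks the reproducing property. It quotes Fuselier's Fourier characterization $\|\bff\|_{\calN_{\bfK}}^2=(2\pi)^{-d/2}\int\widehat\bff^*\widehat{\bfK}^{\dagger}\widehat\bff\,\dd\bxi$ for translation-invariant matrix kernels and inserts $\widehat\bfK=\omega^2\calF_{d+4-2k}\phi\,\bfI$. It then applies the same formula to $\widehat\bfK_\dive$ and $\widehat\bfK_\curl$, which are this scalar times complementary orthogonal projectors.

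You instead build the candidate space, check completeness and both reproducing axioms directly, and conclude by uniqueness of the reproducing kernel Hilbert space. Your splitting uses the same projectors as the paper. You additionally check that the div-free (resp.\ curl-free) subspace, with the restricted $\calN_{\bfK}$-norm, has $\bfK_\dive$ (resp.\ $\bfK_\curl$) as its reproducing kernel.

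The paper's route is shorter. However, its claim that finiteness of the integral forces $\bxi^\top\widehat\bff=0$ really rests on the range condition built into the cited characterization. Indeed, $\widehat\bff^*\widehat{\bfK}_\dive^{\dagger}\widehat\bff=\|(\bfI-\bxi\bxi^\top/\omega^2)\widehat\bff\|^2/(\omega^2\calF_{d+4-2k}\phi)$ is dominated by $\|\widehat\bff\|^2/(\omega^2\calF_{d+4-2k}\phi)$, so it is finite for every $\bff\in\calN_{\bfK}$, div-free or not. Your direct verification makes the identification of the two sub-native spaces and their norms explicit, at the cost of routine checks.

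Two remarks on those checks. First, to keep limits of Cauchy sequences in $\bL_2$ you also need $w:=\omega^2\calF_{d+4-2k}\phi$ to be bounded, which follows e.g.\ from $\bfK\in\bL_1$. Second, the low-frequency ``obstacle'' you single out is harmless. Everything runs through the isometry $\bff\mapsto\widehat\bff/\sqrt{w}$, with $w$ bounded, integrable and positive off the null set $\{\bld{0}\}$, so no separate dominated-convergence argument is needed.

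One step does not work as written: the stated hypothesis plus the dimension walk does not give $\calF_{d+4-2k}\phi>0$.
\begin{itemize}
\item By \cref{lem:operator_properties}, positive definiteness of $\calD^k\phi$ on $\R^{d+4-2k}$ controls $\calF_{d+4-2k}(\calD^k\phi)$.
\item What you need is $\calF_{d+4-2k}\phi=\calF_{d+4}(\calD^k\phi)$. That is, $\calD^k\phi$ must be positive definite on $\R^{d+4}$, equivalently $\phi$ positive definite on $\R^{d+4-2k}$, as in \cref{thm:native_space_Fourier_decay}. For $k\ge 1$ this is strictly stronger, since positive definiteness passes from higher to lower dimensions but not conversely.
\item Moreover, positive definiteness only yields a nonnegative, not identically zero, transform, whereas dividing by it requires positivity almost everywhere.
\end{itemize}
The paper's proof makes the same tacit assumption (``the scalar factor is non-zero''). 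So this is a defect of the hypothesis as stated rather than of your strategy, but you should state explicitly the assumption you actually use.
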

\begin{proof}
	For translation-invariant matrix kernels, the native space norm is characterized via the Fourier transform as (see, e.g., \cite[Sec.~3.2]{Fuselier_2008Adv_improved})
	\begin{equation}\label{eq:NativespaceNorm}
		\|\bff\|_{\calN_{\bfK}}^2 = (2\pi)^{-d/2}\int_{\R^d} \widehat\bff(\bxi)^* \widehat{\bfK}(\bxi)^{\dagger} \widehat\bff(\bxi) \dd\bxi,
	\end{equation}
	where $\widehat{\bfK}(\bxi)^{\dagger}$ denotes the Moore--Penrose pseudoinverse of the matrix $\widehat{\bfK}(\bxi)$. 
	
	First, consider the combined kernel $\bfK$. Its Fourier transform is a scalar multiple of the identity, $\widehat{\bfK}(\bxi) = \|\bxi\|^2\,\calF_{d+4-2k}\phi(\|\bxi\|) \bfI$. Then the pseudoinverse is given by
	$$
	\widehat{\bfK}(\bxi)^{\dagger}=\frac{1}{\|\bxi\|^2\,\calF_{d+4-2k}\phi(\|\bxi\|)}\bfI.
	$$
	Substituting this into \eqref{eq:NativespaceNorm} yields the native space norm.
	
	Next, we characterize the subspaces. The Fourier transform of the div-free kernel factors as
	$$
	\widehat{\bfK}_\dive(\bxi) = \|\bxi\|^2\,\calF_{d+4-2k}\phi(\|\bxi\|) \mathbf{P}_{\bxi}, \quad \text{where} \quad \mathbf{P}_{\bxi} = \bfI - \frac{\bxi\bxi^\top}{\|\bxi\|^2}
	$$
	is the orthogonal projector onto the subspace orthogonal to $\bxi$. Since the scalar factor is non-zero for $\|\bxi\| > 0$, the pseudoinverse is $\widehat{\bfK}_\dive(\bxi)^{\dagger} = (\|\bxi\|^2\,\calF_{d+4-2k}\phi(\|\bxi\|))^{-1} \mathbf{P}_{\bxi}$. For the native space integral \eqref{eq:NativespaceNorm} to be finite, $\widehat\bff(\bxi)$ must lie in the range of $\widehat{\bfK}_\dive(\bxi)$ almost everywhere. This implies $\mathbf{P}_{\bxi} \widehat\bff(\bxi) = \widehat\bff(\bxi)$, or equivalently $\bxi^\top \widehat\bff(\bxi) = 0$, which corresponds to the physical condition $\nabla \cdot \bff = 0$. 
	Under this condition, the quadratic form simplifies to
	$$\widehat\bff(\bxi)^* \widehat{\bfK}_\dive(\bxi)^{\dagger} \widehat\bff(\bxi) = \frac{\widehat\bff(\bxi)^* \mathbf{P}_{\bxi} \widehat\bff(\bxi)}{\|\bxi\|^2 \,\calF_{d+4-2k}\phi(\|\bxi\|)} = \frac{\|\widehat\bff(\bxi)\|^2}{\|\bxi\|^2 \,\calF_{d+4-2k}\phi(\|\bxi\|)}.$$
	An analogous argument applies to $\bfK_{\curl}$ using the complementary projector $\bfI - \mathbf{P}_{\bxi}$. Since $\mathbf{P}_{\bxi}$ and $\bfI - \mathbf{P}_{\bxi}$ are orthogonal projectors summing to the identity, the decomposition $\calN_{\bfK} = \calN_{\bfK_{\dive}} \oplus \calN_{\bfK_{\curl}}$ follows immediately.
\end{proof}


\smallskip
\section{Stability and error estimates of matrix-valued kernel interpolation}
\label{sec:error}

Under standard decay assumptions on the Fourier transform, the native space $\mathcal N_{\mathbf K}(\R^d)$ can be identified with a subspace of a Sobolev space, with equivalence of norms. To make this precise, let $\psi_m:[0,\infty)\to\R$ be radial and suppose that its $d$-dimensional Fourier transform satisfies
\begin{equation} \label{eq:psi_hat_decay}
	\calF_d\psi_m(\omega) \asymp (1+\omega^{2})^{-m}, \quad m > d/2.
\end{equation}
Let $\bfK$ be the matrix-valued kernel of the form \eqref{eq:combined_kernel} with 
	$\beta(r) = \calD^k\psi_m(r), ~~k\in\bbN$. 
 We also introduce the Sobolev subspace $\widetilde{\bH}^{\tau}(\R^d)$ as
\begin{equation} \label{eq:Sub_divspace}
	\widetilde{\bH}^{\tau}(\R^d) := \left\{\bff \in \bH^\tau(\R^d) :  \|\bff\|_{\widetilde{\bH}^{\tau}} < \infty \right\},
\end{equation}
equipped with the norm
\begin{equation} \label{eq:H_norm}
	\|\bff\|_{\widetilde{\bH}^{\tau}}^2 := (2\pi)^{-d/2} \int_{\R^d} \frac{\|\widehat{\bff}(\bxi)\|^2}{\|\bxi\|^{2}} (1 + \|\bxi\|^2)^{\tau+1}  \, \dd\bxi.
\end{equation}

\begin{theorem} \label{thm:native_space_Fourier_decay}
	Let $\bfK$ be the matrix-valued kernel of the form \eqref{eq:combined_kernel} with $\beta=\calD^k\psi_m$, where $\psi_m$ satisfies \eqref{eq:psi_hat_decay}. If $\psi_m$ is positive definite on $\R^{d+4-2k}$, then $\bfK$ is positive definite on $\R^d$. Moreover, $\calN_{\bfK}(\R^d)\cong\widetilde\bH^{m+1-k}(\R^{d})$ with equivalence of norms:
	\begin{equation} \label{eq:native_norm_equiv}
		\|\bff\|_{\calN_{\bfK}} \asymp \|\bff\|_{\widetilde\bH^{m + 1-k}(\R^{d})}, \quad \bff \in \calN_{\bfK}.
	\end{equation}
\end{theorem}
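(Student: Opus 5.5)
The argument runs entirely through the Fourier characterizations already established, so it splits into a spectral identification step and a comparison of weights.

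\textbf{Spectral identification.} Apply \cref{lem:general_spectral_relation} with $\phi=\psi_m$ to get
$$\widehat{\bfK}(\bxi)=\omega^2\,\calF_{d+4-2k}\psi_m(\omega)\,\bfI .$$
The decay hypothesis \eqref{eq:psi_hat_decay} is stated for the $d$-dimensional transform, whereas the symbol involves $\calF_{d+4-2k}\psi_m$. The two are linked by the dimension-walk relations of \cref{lem:operator_properties}: $\calF_{d+4-2k}\psi_m=\calF_{d+4}(\calD^k\psi_m)=\calD^2\calF_d(\calD^k\psi_m)$, and more directly $\calF_{d+2j}\psi_m=\calD^j\calF_d\psi_m$ when walking upward (for $k\le 1$), or $\calF_{d-2j}\psi_m=\calI^j\calF_d\psi_m$ when walking downward (for $k\ge 3$).

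\textbf{Transferring the decay (main obstacle).} We must show $\calF_{d+4-2k}\psi_m(\omega)\asymp(1+\omega^2)^{-(m+2-k)}$. This is consistent with the model case: the Mat\'ern/Sobolev kernel $\psi_m$, whose $d$-variate transform is $(1+\omega^2)^{-m}$, is the restriction of a radial function whose $(d+2j)$-variate transform behaves like $(1+\omega^2)^{-m-j}$. For the model profile this follows by direct computation, since
$$\calD(1+\omega^2)^{-m}=2m(1+\omega^2)^{-m-1}\quad\text{and}\quad \calI(1+\omega^2)^{-m}=\tfrac{1}{2(m-1)}(1+\omega^2)^{-m+1}.$$
For a general $\psi_m$ with only two-sided bounds, $\calI$ is monotone and preserves two-sided bounds on integrals. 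The $\calD$ direction cannot simply differentiate an $\asymp$ relation. I would therefore read \eqref{eq:psi_hat_decay} as the standing assumption for the dimension in which the symbol is actually used, or equivalently assume it for the transform in dimension $d+4-2k$, which is what positive definiteness on $\R^{d+4-2k}$ references. I expect this to be the delicate point, and I would state it as an interpretation of the hypothesis. Positive definiteness of $\bfK$ on $\R^d$ then follows because $\calF_{d+4-2k}\psi_m>0$ (as $\psi_m$ is positive definite on $\R^{d+4-2k}$). Hence $\widehat{\bfK}(\bxi)$ is positive definite for $\bxi\ne0$, and for distinct centers the quadratic form $\sum_{j,\ell}\bc_j^\top\bfK(\bx_j-\bx_\ell)\bc_\ell$ equals
$$\int\omega^2\calF_{d+4-2k}\psi_m(\omega)\Big\|\sum_j\bc_je^{i\bx_j^\top\bxi}\Big\|^2\dd\bxi>0,$$
using the linear independence of the exponentials.

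\textbf{Comparison of weights.} By \cref{thm:native_norm_general}, the native norm has weight
$$\bigl(\|\bxi\|^2\calF_{d+4-2k}\psi_m(\|\bxi\|)\bigr)^{-1}\asymp\|\bxi\|^{-2}(1+\|\bxi\|^2)^{m+2-k}.$$
This is exactly the weight in \eqref{eq:H_norm} with $\tau+1=m+2-k$, i.e.\ $\tau=m+1-k$. Integrating against $\|\widehat\bff\|^2$ gives \eqref{eq:native_norm_equiv} with constants from \eqref{eq:psi_hat_decay}. The set equality follows because finiteness of one integral is equivalent to finiteness of the other, and the continuity and $\bL_2$ requirement in \cref{thm:native_norm_general} are automatic for $\widetilde\bH^{\tau}$ with $\tau>d/2$ by Sobolev embedding, given $m>d/2$ and the admissible range of $k$.
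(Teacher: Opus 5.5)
Your proof follows the paper's route. \cref{lem:general_spectral_relation} gives $\widehat\bfK(\bxi)=\omega^2\calF_{d+4-2k}\psi_m(\omega)\bfI$, the decay is moved to dimension $d+4-2k$, and the weight is compared with that of $\widetilde\bH^{\tau}$ for $\tau=m+1-k$. The real difference is the step you single out. The paper settles it in one sentence (the exponent is ``adjusted by half the difference in dimensions'') with no argument. For $k\ge 2$ your monotonicity argument supplies what is missing: $\calF_{d+4-2k}\psi_m=\calI^{k-2}\calF_d\psi_m$, and $\calI$ maps $\asymp(1+\omega^2)^{-s}$ with $s>1$ to $\asymp(1+\omega^2)^{-(s-1)}$. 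The required $m>k-2$ follows from $m>d/2$ and $d+4-2k\ge 1$.

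For $k\le 1$ your doubt is justified, and more strongly the step is false, so no proof from \eqref{eq:psi_hat_decay} alone exists. Here is a counterexample for $k=1$.
\begin{itemize}
\item Take $m>d/2+1$ and $h(\omega):=(1+\omega^2)^{-m-1}(1+\cos\omega)$. Let $\psi_m$ be the radial function with $\calF_{d+2}\psi_m=h$. It is $C^2$ and rapidly decaying, because $h$ is even and smooth with derivatives $O(\omega^{-2m-2})$.
\item By the dimension walk, $\calF_d\psi_m=\calI h$. One integration by parts gives $\calI h(r)=\frac{1}{2m}(1+r^2)^{-m}+O(r^{-2m-1})$, and $\calI h>0$, so \eqref{eq:psi_hat_decay} holds.
\item Since $h\ge 0$ and $h\not\equiv 0$, $\psi_m$ is positive definite on $\R^{d+2}$.
\item With $k=1$, $\widehat\bfK(\bxi)=\omega^2 h(\omega)\bfI$ vanishes to second order on the spheres $\omega=(2j+1)\pi$. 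The native weight therefore behaves like $(\omega-\pi)^{-2}$ near $\|\bxi\|=\pi$.
\item Any $\bff$ whose Fourier transform is smooth, supported in a thin annulus around this sphere and nonzero on it, lies in $\widetilde\bH^{m}(\R^d)$ but not in $\calN_{\bfK}$. So \eqref{eq:native_norm_equiv} fails. Only $\|\bff\|_{\widetilde\bH^{m}}\lesssim\|\bff\|_{\calN_{\bfK}}$ survives, from $h\le 2(1+\omega^2)^{-m-1}$.
\end{itemize}
Starting instead from $(1+\omega^2)^{-m-2}(1+\cos\omega)$ in dimension $d+4$ and applying $\calI$ twice gives the same failure for $k=0$. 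So for $k\in\{0,1\}$ (exactly the kernels $\mathbf K^{(0)},\mathbf K^{(1)}$ the paper advocates) the theorem needs a stronger hypothesis. Your repair is necessary, not merely an interpretive choice.

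The same example shows that positive definiteness on $\R^{d+4-2k}$ only yields $\calF_{d+4-2k}\psi_m\ge 0$, not $>0$ as you wrote. Your quadratic-form argument still works, because it only needs positivity on an open set.

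You should therefore state the assumption precisely as $\calF_{d+4-2k}\psi_m\asymp(1+\omega^2)^{-(m+2-k)}$.
\begin{itemize}
\item This implies \eqref{eq:psi_hat_decay} (apply $\calI^{2-k}$ and use monotonicity) but not conversely, so drop the word ``equivalently''.
\item Imposing \eqref{eq:psi_hat_decay} literally on the $(d+4-2k)$-variate transform would give exponent $m$ and the space $\widetilde\bH^{m-1}$. That is not what your weight comparison uses.
\item Your model computation shows the strengthened hypothesis holds for the Mat\'ern family. With it, the rest of your argument is correct.
\end{itemize}
One further caveat concerns your closing appeal to Sobolev embedding. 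Continuity of elements of $\widetilde\bH^{m+1-k}$, and of $\bfK$ itself, requires $m+1-k>d/2$. This holds for $k\le 1$. It is not implied by $m>d/2$ when $k\ge 2$; for example $k=2$ with $d/2<m\le d/2+1$ gives a discontinuous $\bfK$. So ``the admissible range of $k$'' should be made explicit as this extra condition. The paper is silent on this point as well.
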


\begin{proof}
	By \cref{lem:general_spectral_relation}, we have $\widehat{\bfK}(\bxi)=\omega^2\calF_{d+4-2k}\psi_m(\omega)\bfI$, where $\omega=\|\bxi\|$. 
	To determine the norm equivalence, we analyze the asymptotic decay of $\widehat{\bfK}$. Since $\calF_d\psi_m(\omega) \asymp (1+\omega^2)^{-m}$, the decay rate in the shifted dimension $d' = d+4-2k$ is adjusted by half the difference in dimensions. Specifically,
	\begin{equation*}
		\calF_{d'}\psi_m(\omega) \asymp (1+\omega^2)^{-(m + \lfloor\frac{d'-d}{2}\rfloor)} = (1+\omega^2)^{-(m + 2 - k)}.
	\end{equation*}
	By \cref{thm:native_norm_general}, the native space norm is characterized by
	\begin{align*}
		\|\bff\|^{2}_{\calN_{\bfK}}
		&= (2\pi)^{-d/2}\int_{\R^{d}} \frac{\|\widehat{\bff}(\bxi)\|^{2}}{\|\bxi\|^2\calF_{d+4-2k}\psi_m(\|\bxi\|)} \, \dd\bxi \\
		&\asymp (2\pi)^{-d/2}\int_{\R^{d}} \frac{\|\widehat{\bff}(\bxi)\|^{2}}{\|\bxi\|^2}(1+\|\bxi\|^2)^{m+2-k} \, \dd\bxi.
	\end{align*}
	Comparing this to \eqref{eq:H_norm} with $\tau = m+1-k$, which completes the proof.
\end{proof}

\begin{remark}
\cref{thm:native_space_Fourier_decay} shows that, for a fixed generator $\psi_m$, the native space associated with $\bfK$ has Sobolev order $m+1-k$. Thus, smaller values of $k$ lead to higher regularity of the native space. At the same time, the positive definiteness requirement is imposed on $\psi_m$ in the higher-dimensional space $\R^{d+4-2k}$. For kernels that are positive definite in every dimension, such as the Gaussian and the inverse multiquadric, this additional requirement is immaterial.
\end{remark}

In the subsequent sections, we study stability and error estimates for div-free matrix-valued kernel interpolation. The corresponding results in the curl-free setting follow similarly.
\subsection{Stability}
Let $X = \{\bx_1, \dots, \bx_N\} \subset \R^d$ be a set of pairwise distinct nodes, and let $\{\bff_j = \bff(\bx_j)\}_{j=1}^N \subset \R^d$ denote data sampled from a target div-free field $\bff$. We define the div-free trial space as
\begin{equation}\label{eq:trialsp}
    \trialsp := \Big\{\sum_{j=1}^N \bfK_\dive(\cdot - \bx_j) \bc_j : \bx_j \in X, \, \bc_j \in \R^d\Big\}.
\end{equation}
Let $I_X\bff \in \trialsp$ be the unique interpolant determined by
\begin{equation} \label{eq:interp_cond}
    (I_X\bff)(\bx_k) = \sum_{j=1}^N \bfK_\dive(\bx_k - \bx_j) \bc_j = \bff_k, \quad k=1, \dots, N.
\end{equation}
Equivalently, \eqref{eq:interp_cond} can be written as the block linear system
\begin{equation*}
    A_{\bfK_\dive,X} \bc = \bff|_X,
\end{equation*}
where $\bc = [\bc_1^\top, \dots, \bc_N^\top]^\top$ and $\bff|_X = [\bff_1^\top, \dots, \bff_N^\top]^\top$ are vectors in $\R^{dN}$. The interpolation matrix $A_{\bfK_\dive,X} \in \R^{dN \times dN}$ consists of  $N \times N$ blocks of size $d \times d$, with $(k,j)$-th block $(A_{\bfK_\dive,X})_{k,j} = \bfK_\dive(\bx_k - \bx_j)$. If $\bfK_\dive$ is positive definite, then $A_{\bfK_\dive,X}$ is symmetric and positive definite, and the interpolation problem is well posed.

Lower bounds for the smallest eigenvalue (and hence stability estimates) for div-free kernel interpolation were derived by Fuselier \cite[Thm.~6]{Fuselier_2008Adv_improved}. That analysis treats kernels of the form $\Phi = (-\Delta \bfI + \nabla\nabla^\top)\phi$, generated from a scalar potential $\phi$, and expresses the eigenvalue bound in terms of the Fourier transform $\widehat{\phi}$. To apply this result in our setting, we note from \cref{lem:general_spectral_relation} that $\bfK_\dive$ can be written in potential form:
\begin{equation}\label{eq:dive_ker_diff}
    \bfK_\dive = (-\Delta \bfI + \nabla\nabla^\top) \calI^2 \beta_{\dive} = (-\Delta \bfI + \nabla\nabla^\top) \calI^{2-k}\psi_m.
\end{equation}
Thus, upon identifying the scalar potential as $\phi = \calI^{2-k}\psi_m$, Fuselier’s framework yields the following stability result.

\begin{theorem}
\label{thm:min_eig}
 Suppose the assumptions of \cref{thm:native_space_Fourier_decay} hold. Let $\bfK_\dive$ be the div-free kernel defined as in \eqref{eq:coeff_div} with $\beta_\dive = \calD^k\psi_m$. Define the auxiliary function
\begin{equation*}
    M(\delta) := \inf_{\|\bxi\| \leq \delta} \widehat{\calI^{2-k}\psi_m}(\|\bxi\|).
\end{equation*}
Then, there exists a constant $\tilde{c} > 0$, independent of $\psi_m$ and $X$, such that for all $\delta \geq \tilde{c}/q_X$, the smallest eigenvalue of the interpolation matrix satisfies
\begin{equation}\label{eq:minimum_eigenvalue}
    \lambda_{\min}(A_{\bfK_\dive,X}) \ge \frac{\pi}{(4\pi)^2\Gamma((d+2)/2)} \left(\frac{\delta^2}{16\pi}\right)^{(d+2)/2} M(\delta).
\end{equation}
Moreover, if the generator $\psi_m$ satisfies the decay condition \eqref{eq:psi_hat_decay}, then
\begin{equation*}
    \lambda_{\min}(A_{\bfK_\dive,X}) \ge c_d \, q_X^{2m - d - 2k + 2},
\end{equation*}
where $c_d$ is a positive constant depending only on the dimension $d$.
\end{theorem}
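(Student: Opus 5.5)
The plan is to reduce the statement to Fuselier's eigenvalue bound \cite[Thm.~6]{Fuselier_2008Adv_improved}, using the potential representation \eqref{eq:dive_ker_diff}, and then insert the Fourier decay of the potential.

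\textbf{Step 1 (potential form).} We first verify \eqref{eq:dive_ker_diff}: with $\phi := \calI^{2-k}\psi_m$ we have $\calD^2\phi = \calD^k\psi_m = \beta_\dive$ by \cref{lem:operator_properties}\ref{item:inversion}. Expanding $(-\Delta\bfI+\nabla\nabla^\top)\phi$ gives coefficients \eqref{eq:pot_coeffs} with $\beta_\phi=\calD^2\phi$. By \cref{cor:alpha_beta_relation}, both $\alpha_\phi$ and $\alpha_\dive$ are determined by the same $\beta$ through \eqref{eq:identity}, so $\bfK_\dive=(-\Delta\bfI+\nabla\nabla^\top)\phi$. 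This can be double-checked on the Fourier side: \cref{lem:general_spectral_relation} gives
\[
\widehat\bfK_\dive=\calF_{d+4-2k}\psi_m\,(\omega^2\bfI-\bxi\bxi^\top),
\]
while the potential form gives $\widehat\phi(\omega)(\omega^2\bfI-\bxi\bxi^\top)$. These agree because $\widehat\phi=\calF_d\calI^{2-k}\psi_m=\calF_{d+4-2k}\psi_m$ by the dimension walk of \cref{lem:operator_properties}\ref{item:fourier_relation}. In particular $\widehat\phi\ge0$, and $\phi$ is positive definite under the hypotheses of \cref{thm:native_space_Fourier_decay}.

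\textbf{Step 2 (apply Fuselier's result).} Fuselier's argument bounds the quadratic form
\[
\sum_{j,k}\bc_j^\top\bfK_\dive(\bx_j-\bx_k)\bc_k=(2\pi)^{-d/2}\int\widehat\phi(\bxi)\,\Bigl\|\sum_j(\omega^2\bfI-\bxi\bxi^\top)^{1/2}\bc_je^{-i\bx_j^\top\bxi}\Bigr\|^2\dd\bxi
\]
from below by restricting $\widehat\phi$ to the ball $\|\bxi\|\le\delta$, where $\widehat\phi\ge M(\delta)$. It then compares with a compactly supported bump/Bessel-type function whose matrix kernel has explicit lower bound on the diagonal and controllable off-diagonal sum once $\delta\ge\tilde c/q_X$. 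This yields \eqref{eq:minimum_eigenvalue} with precisely the displayed constant. Since $\tilde c$ comes from the comparison function alone, it is independent of $\psi_m$ and $X$. I would cite this rather than reprove it, checking only that its hypotheses (radial, integrable $\widehat\phi\ge0$, sufficient smoothness of $\phi$) hold for $\phi=\calI^{2-k}\psi_m$.

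\textbf{Step 3 (explicit rate).} From \eqref{eq:psi_hat_decay} and the dimension shift used in the proof of \cref{thm:native_space_Fourier_decay}, we have $\widehat\phi(\omega)=\calF_{d+4-2k}\psi_m(\omega)\asymp(1+\omega^2)^{-(m+2-k)}$. Hence $\widehat\phi$ is radially decreasing up to constants, and
\[
M(\delta)\gtrsim(1+\delta^2)^{-(m+2-k)}.
\]
Choosing $\delta=\tilde c/q_X$ and assuming $q_X\le1$ (so $1+\delta^2\lesssim q_X^{-2}$), we get $M(\delta)\gtrsim q_X^{2(m+2-k)}$. Also $(\delta^2)^{(d+2)/2}\asymp q_X^{-(d+2)}$. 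Multiplying,
\[
\lambda_{\min}\ge c_d\,q_X^{2m+4-2k-d-2}=c_d\,q_X^{2m-d-2k+2}.
\]

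\textbf{Main obstacle.} The main obstacle is Step 1/2 bookkeeping: justifying that $\calI^{2-k}\psi_m$ is well defined (for $k<2$ this is a genuine integration requiring decay of $\psi_m$, and for $k>2$ a differentiation requiring smoothness). One must also confirm that Fuselier's hypotheses, which are phrased for a potential $\phi$, are met even when $\phi$ is only as regular as $\calI^{2-k}\psi_m$. I would handle this by working entirely on the Fourier side, where Fuselier's proof only uses $\widehat\phi$ and the symbol $\omega^2\bfI-\bxi\bxi^\top$.
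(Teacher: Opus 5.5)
Your proposal is correct and follows essentially the same route as the paper. You identify the potential $\calI^{2-k}\psi_m$ via \eqref{eq:dive_ker_diff}, cite Fuselier's Theorem~6 for \eqref{eq:minimum_eigenvalue}, and then insert $\widehat{\calI^{2-k}\psi_m}\gtrsim(1+\omega^2)^{-(m+2-k)}$ with $\delta=\tilde c/q_X$ and $q_X$ small to get $q_X^{2m-d-2k+2}$. Your Fourier-side check of the potential form and of Fuselier's hypotheses is more careful than the paper, which simply asserts them.

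As in the paper's own proof, the final constant depends on the comparability constants in \eqref{eq:psi_hat_decay} and on $m,k$, not only on $d$.
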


\begin{proof}
The estimate \eqref{eq:minimum_eigenvalue} follows immediately from \cite[Thm.~6]{Fuselier_2008Adv_improved} together with the potential representation \eqref{eq:dive_ker_diff}. For the second claim, the decay assumption \eqref{eq:psi_hat_decay} implies that there exists a constant $\tilde{c}_1 > 0$ such that
\begin{equation*}
    \widehat{\calI^{2-k}\psi_m}(\|\bxi\|) \ge \tilde{c}_1 (1 + \|\bxi\|^2)^{-(m + 2 - k)}.
\end{equation*}
Hence, for $\delta \geq 1$, the auxiliary function satisfies
\begin{equation*}
    M(\delta) \ge \tilde{c}_1 \delta^{-2(m + 2 - k)}.
\end{equation*}
Now choose $\delta = \tilde{c}/q_X$. For $q_X$ sufficiently small this ensures $\delta \ge 1$, and inserting the above lower bound for $M(\delta)$ into \eqref{eq:minimum_eigenvalue} gives
\begin{equation*}
    \lambda_{\min}(A_{\bfK_\dive,X}) \ge C \, q_X^{-(d+2)} q_X^{2(m + 2 - k)} = C \, q_X^{2m - d - 2k + 2},
\end{equation*}
where $C>0$ denotes a generic constant independent of $X$. This completes the proof.
\end{proof}

\subsection{Direct estimates}
Throughout this section, we impose the following assumptions on the domain, the discretization, and the kernel.

\begin{assumption}\label{Assump_domain}
Let $\Omega \subset \R^d$ be a bounded, simply connected domain with boundary of class $\mathcal C^{\lceil m\rceil,1}$. Let $X\subset \Omega$ be a quasi-uniform set of centers with fill distance $h_X$, separation distance $q_X$, and mesh ratio $\rho_X:=h_X/q_X$. Assume further that the trial space $\trialsp$ in \eqref{eq:trialsp} is generated by the div-free kernel $\mathbf K_{\dive}$ of the form \eqref{eq:kernel_form}, with coefficient $\beta_{\dive}=\mathcal D\psi_m$, where the scalar kernel $\psi_m$ satisfies the decay condition \eqref{eq:psi_hat_decay}.
\end{assumption}

We restrict attention to the choice $\beta_{\dive}=\mathcal D\psi_m$. In this case, the native space $\mathcal N_{\mathbf K_{\dive}}$ is norm-equivalent to $\widetilde{\bH}^{m}_{\dive}(\R^d)$. The analysis for other kernel constructions is analogous. Throughout this section, $C$ denotes a generic positive constant independent of the discretization parameters $h_X$ and $q_X$, whose value may vary from line to line.

The existence of a fractional extension operator preserving the div-free constraint was established in \cite[Prop.~3.8]{Wendland_2009SINUM_divergence}.

\begin{lemma}\label{lem:Extension}
Let $\Omega$ satisfy \cref{Assump_domain}, and let $m\in\R^{+}$. Then there exists a continuous extension operator
\[
\mathfrak E_{\dive}:\bH^{m}_{\dive}(\Omega)\to \widetilde{\bH}^{m}_{\dive}(\R^d)
\]
such that $\mathfrak E_{\dive}\bff|_{\Omega}=\bff$ for all $\bff\in \bH^{m}_{\dive}(\Omega)$.
\end{lemma}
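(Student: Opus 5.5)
The plan is to build the extension from a vector potential, so that the divergence constraint and the extra low-frequency weight $\|\bxi\|^{-2}$ in the norm \eqref{eq:H_norm} are handled at the same time. I will first recall that, since $\Omega$ is bounded, simply connected and has a $C^{\lceil m\rceil,1}$ boundary, every $\bff\in\bH^m_{\dive}(\Omega)$ admits a potential $\bld\psi$ with $\bff=\nabla\times\bld\psi$. In $d=2$ this is a stream function; in $d=3$ it is a vector potential in the sense of Girault--Raviart. The potential should satisfy
\[
\|\bld\psi\|_{\bH^{m+1}(\Omega)}\le C\|\bff\|_{\bH^m(\Omega)},
\]
and the map $\bff\mapsto\bld\psi$ should be linear.

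For integer $m$ I would obtain this from elliptic regularity. Fix a gauge, for instance $\nabla\cdot\bld\psi=0$ in $\Omega$ with a normal or tangential boundary condition, and use the $C^{\lceil m\rceil,1}$ boundary regularity. For fractional $m$ I would interpolate. The key point is that one and the same linear map works at every order, so it is bounded $\bH^{m_0}_{\dive}\to\bH^{m_0+1}$ and $\bH^{m_1}_{\dive}\to\bH^{m_1+1}$, and interpolation gives the intermediate orders.

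Next, let $E$ be a Stein-type universal extension operator, which is bounded $H^{s}(\Omega)\to H^{s}(\R^d)$ for all $0\le s\le\lceil m\rceil+1$. Choose a cutoff $\chi\in C_c^\infty(\R^d)$ with $\chi\equiv1$ on $\overline\Omega$, and set
\[
\mathfrak E_{\dive}\bff:=\nabla\times\big(\chi\,E\bld\psi\big).
\]
This field is automatically div-free on $\R^d$. It restricts to $\nabla\times\bld\psi=\bff$ on $\Omega$, and it depends linearly on $\bff$. Its membership in $\bH^m(\R^d)$ is immediate.

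For the weighted norm I use $\widehat{\mathfrak E_{\dive}\bff}(\bxi)=i\bxi\times\widehat{\bld g}(\bxi)$, where $\bld g=\chi E\bld\psi$ (in $d=2$ the analogous formula uses $\bxi^\perp$). This gives $\|\widehat{\mathfrak E_{\dive}\bff}(\bxi)\|^2/\|\bxi\|^2\le\|\widehat{\bld g}(\bxi)\|^2$, and therefore
\[
\|\mathfrak E_{\dive}\bff\|_{\widetilde\bH^m}^2\le C\int_{\R^d}(1+\|\bxi\|^2)^{m+1}\|\widehat{\bld g}(\bxi)\|^2\,\dd\bxi
= C\|\bld g\|_{\bH^{m+1}(\R^d)}^2\le C\|\bld\psi\|_{\bH^{m+1}(\Omega)}^2\le C\|\bff\|_{\bH^m(\Omega)}^2 .
\]
This is exactly the continuity claimed in the lemma.

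The main obstacle is the first step: a bounded linear potential map $\bH^m_{\dive}(\Omega)\to\bH^{m+1}(\Omega)$ at fractional order. In $d=3$ a simply connected domain may still have a disconnected boundary, and then a div-free field with nonzero flux through an inner boundary component has no global vector potential. I would handle this in one of two ways. One option is to read \cref{Assump_domain} as also forcing a connected boundary. Otherwise, subtract the finitely many harmonic fields spanning the fluxes and extend those separately by smooth div-free fields built from $\nabla\times$ of cutoff potentials near each inner component. If the interpolation argument proves delicate, my fallback is a Bogovski\u{\i}-type correction. Extend $\bff$ componentwise to $E\bff$, note that $\nabla\cdot E\bff$ is supported in $\R^d\setminus\Omega$ and has zero mean on each bounded complementary component because of the flux condition, and remove it with a Bogovski\u{\i} operator on a surrounding annulus. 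That operator is bounded $H^{s-1}_0\to H^{s}_0$ for all real $s$.
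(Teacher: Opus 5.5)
Your core construction matches the standard argument behind the result the paper cites, \cite[Prop.~3.8]{Wendland_2009SINUM_divergence}; the paper gives no proof of its own. That construction is: a stream function or vector potential $\bld\psi$ with $\|\bld\psi\|_{\bH^{m+1}(\Omega)}\le C\|\bff\|_{\bH^m(\Omega)}$, Stein extension, cutoff, curl, and $\|\bxi\times\widehat{\bld g}(\bxi)\|\le\|\bxi\|\,\|\widehat{\bld g}(\bxi)\|$ to absorb the weight $\|\bxi\|^{-2}$. Only the normal gauge $\bld\psi\cdot\bld n=0$ is usable for general $\bff$, since $\bld\psi\times\bld n=0$ on $\partial\Omega$ forces $\bff\cdot\bld n=0$. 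Two further steps fail as written.

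First, your ``otherwise'' branch for a disconnected boundary cannot be carried out, and no argument can replace it. Suppose $\R^3\setminus\overline\Omega$ has a bounded component $O$, as for the smooth, simply connected shell $\Omega=\{1<\|\bx\|<2\}$. Then every div-free $\bgg\in\bH^m(\R^3)$ has flux $\int_{\partial O}\bgg\cdot\bld n=\int_O\nabla\cdot\bgg=0$, and this flux is determined by $\bgg|_\Omega$. But $\bff=\bx/\|\bx\|^3$ is smooth and div-free on $\Omega$ with flux $4\pi$ through $\partial O$. So this $\bff$ has no div-free extension at all. Likewise, a curl of a cutoff potential has zero flux through every closed surface, so it cannot reproduce the harmonic fields you propose to subtract. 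The zero-flux condition on each cavity is therefore necessary: the lemma as stated fails for such domains satisfying \cref{Assump_domain}. Your first option, adding connectedness of $\partial\Omega$, is the correct resolution. Your Bogovski\u{\i} fallback also silently relies on exactly this condition.

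Second, the fractional step has a gap. Boundedness of $\bff\mapsto\bld\psi$ on $\bH^{m_0}_{\dive}(\Omega)$ and $\bH^{m_1}_{\dive}(\Omega)$ only gives boundedness on $[\bH^{m_0}_{\dive}(\Omega),\bH^{m_1}_{\dive}(\Omega)]_\theta$. Identifying that space with $\bH^{m}_{\dive}(\Omega)$ requires a projection onto div-free fields that is bounded at both orders. One choice is $P\bu=\bu-\nabla q$ with $\Delta q=\nabla\cdot\bu$ in $\Omega$ and $q=0$ on $\partial\Omega$; this is bounded on $\bH^{m_0}(\Omega)$ and $\bH^{m_1}(\Omega)$ for a $C^{\lceil m\rceil,1}$ boundary. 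You then interpolate $\bu\mapsto\bld\psi(P\bu)$ on the unconstrained Sobolev scale.

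If you use the Bogovski\u{\i} route instead, it handles real $m$ and every $d$ directly, whereas the curl construction is tied to $d\in\{2,3\}$. However, the weight $\|\bxi\|^{-2}$ is then no longer absorbed by a curl. You need that a div-free $\bgg$ supported in a fixed ball $B_R$ satisfies $\int g_j=-\int x_j\,\nabla\cdot\bgg=0$, hence $\|\widehat\bgg(\bxi)\|\le C_R\|\bxi\|\,\|\bgg\|_{\bL_2}$. This is essential in $d=2$, where $\|\bxi\|^{-2}$ is not locally integrable.
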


Using this result, we derive error estimates for RBF approximations of div-free vector fields in the native space. The following theorem extends \cite[Thm.~5]{Fuselier_2008MCoM_sobolev} to the case of noninteger smoothness $m$.

\begin{theorem}\label{thm:WithinNativeSp}
Suppose \cref{Assump_domain} holds. Let $q \in [1, \infty]$ and let $m > d/2$ be a real number. If $\bff \in \bH^m(\Omega)$ is div-free, then
\[
\|\bff - I_X \bff\|_{W_q^\mu(\Omega)} \leq C h_{X,\Omega}^{m - \mu - d(1/2 - 1/q)_+} \|\bff\|_{\bH^m(\Omega)},
\]
for all $\mu$ satisfying $0 \leq \mu \leq m-d(1/2 - 1/q)_+$.
\end{theorem}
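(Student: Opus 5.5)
The argument follows the standard scalar pattern: extend to $\R^d$ without losing the div-free structure, invoke the native-space optimality of $I_X$, and finish with a sampling inequality applied componentwise. By \cref{Assump_domain} with $\beta_\dive=\calD\psi_m$, \cref{thm:native_space_Fourier_decay} (with $k=1$) gives $\calN_{\bfK_\dive}(\R^d)\cong\widetilde\bH^{m}_{\dive}(\R^d)$ with equivalent norms.

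\textbf{Step 1 (extension and native-space bound).} Given $\bff\in\bH^m_\dive(\Omega)$, set $\bff_E:=\frE_\dive\bff\in\widetilde\bH^m_\dive(\R^d)$ using \cref{lem:Extension}. Then $\bff_E\in\calN_{\bfK_\dive}$ and
\[
\|\bff_E\|_{\calN_{\bfK_\dive}}\le C\|\bff_E\|_{\widetilde\bH^m}\le C\|\bff\|_{\bH^m(\Omega)}.
\]
Since $X\subset\Omega$, the interpolants satisfy $I_X\bff=I_X\bff_E$. The interpolant is the orthogonal projection of $\bff_E$ onto $\trialsp$ in $\calN_{\bfK_\dive}$, so
\[
\|I_X\bff_E\|_{\calN_{\bfK_\dive}}\le\|\bff_E\|_{\calN_{\bfK_\dive}}.
\]
Hence the error $\bm e:=\bff_E-I_X\bff_E$ satisfies $\|\bm e\|_{\widetilde\bH^m(\R^d)}\le C\|\bff\|_{\bH^m(\Omega)}$. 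Because $\|\bxi\|^{-2}(1+\|\bxi\|^2)^{m+1}\ge(1+\|\bxi\|^2)^m$, the $\widetilde\bH^m$ norm dominates the $\bH^m$ norm. Consequently $\|\bm e\|_{\bH^m(\Omega)}\le C\|\bff\|_{\bH^m(\Omega)}$.

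\textbf{Step 2 (sampling inequality).} Each component $e_j\in H^m(\Omega)$ vanishes on $X$. I apply the fractional-order zeros lemma (Narcowich--Ward--Wendland, or Arcangéli--López de Silanes--Torrens for real $m>d/2$) on the Lipschitz domain $\Omega$. For $h_{X,\Omega}\le h_0$ and $0\le\mu\le m-d(1/2-1/q)_+$ it gives
\[
\|e_j\|_{W_q^\mu(\Omega)}\le C h_{X,\Omega}^{m-\mu-d(1/2-1/q)_+}|e_j|_{H^m(\Omega)}.
\]
Summing over $j$ and using Step 1 yields the claim. For $h_{X,\Omega}>h_0$, the bound follows directly from the stability in Step 1 together with the Sobolev embedding $H^m\hookrightarrow W_q^\mu$.

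\textbf{Main obstacle.} The delicate point is the noninteger $m$. I need a sampling inequality valid for fractional Sobolev seminorms, and I must make sure the extension operator of \cref{lem:Extension} lands in the weighted space $\widetilde\bH^m$. Only the weighted space, not merely $\bH^m$, is norm-equivalent to the native space, because of the $\|\bxi\|^{-2}$ factor at low frequencies. Here I rely on \cite[Prop.~3.8]{Wendland_2009SINUM_divergence} as stated. The fractional zeros lemma I take from the literature; if only integer versions were available, I would interpolate the operator $\bff\mapsto\bff-I_X\bff$ between neighboring integer orders.
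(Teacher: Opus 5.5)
Your proposal is correct and follows essentially the same route as the paper. You extend by $\frE_{\dive}$ and use the optimality of $I_X$ in $\calN_{\bfK_{\dive}}\cong\widetilde\bH^m_{\dive}(\R^d)$ to get $\|\bff-I_X\bff\|_{\bH^m(\Omega)}\le C\|\bff\|_{\bH^m(\Omega)}$, then combine this with a sampling inequality for functions vanishing on $X$; the paper cites \cite[Corol.~4.7]{Wendland_2009SINUM_divergence} for that last step, whereas you apply the scalar zeros lemma componentwise, and your remarks on $\widetilde\bH^m$ dominating $\bH^m$ and on the regime $h_{X,\Omega}>h_0$ just make explicit what the paper leaves implicit.
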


\begin{proof}
Since the error function $\bff - I_X \bff$ vanishes on the set $X$, an application of \cite[Corol.~4.7]{Wendland_2009SINUM_divergence} yields
\[
\|\bff - I_X \bff\|_{W_q^\mu(\Omega)} \leq C h_{X,\Omega}^{m - \mu - d(1/2 - 1/q)_+} \|\bff - I_X \bff\|_{\bH^m(\Omega)}.
\]
Recall that the native space is equivalent to $\widetilde\bH_{\dive}^m(\R^d)$. By \cref{lem:Extension}, we can continuously extend $\bff$ to $\widetilde\bH_{\dive}^m(\R^d)$ using the operator $\frE_{\dive}$. We then use the best approximation property of the interpolant in the native space, to obtain
\begin{align*}
\|\bff - I_X \bff\|_{\bH^m(\Omega)} 
&\le C \|\frE_{\dive} \bff - I_X \bff\|_{\widetilde\bH^m(\R^d)} \\
&\le C \|\frE_{\dive} \bff\|_{\widetilde\bH^m(\R^d)} \\
&\leq C \|\bff\|_{\bH^m(\Omega)}.
\end{align*}
The proof is completed by combining the preceding estimates.
\end{proof}

 Using the generalized sampling inequalities established in \cite{Arcangeli_2007NumerMath_extension,LeGia_2006JAT_continuous} and adapting the techniques from \cite{avesani_2025arXiv_sobolev,Fuselier_2008MCoM_sobolev}, we derive the following error estimates for functions outside the native space. This result is also a slight refinement of \cite[Thm.~6]{Fuselier_2008MCoM_sobolev} by weakening the assumptions.
\begin{theorem}
    Suppose \cref{Assump_domain} holds. Let $q \in [1, \infty]$ and let $\tau$ be a real number satisfying $m \ge \tau \ge \lfloor \tau \rfloor > d/2$. For any div-free vector field $\bff \in \bH^\tau(\Omega)$, the error estimate
    \begin{equation*}
        \|\bff - I_X\bff\|_{\bW_q^{\mu}(\Omega)} \leq C h_{X}^{\tau - \mu - d(1/2 - 1/q)_+} \rho_{X}^{m - \tau} \|\bff\|_{\bH^\tau(\Omega)}
    \end{equation*}
    holds for all $0 \leq \mu < \lfloor \tau \rfloor - d/2$.
\end{theorem}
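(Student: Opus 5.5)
We follow the standard ``escaping the native space'' argument of \cite{Fuselier_2008MCoM_sobolev,Narcowich_2006ConsApprox_sobolev}, adapted to the div-free setting. The idea is to insert an intermediate div-free function $\bff_\sigma$ that lies in the native space, is close to $\bff$ in $\bH^\tau(\Omega)$, and interpolates $\bff$ on $X$.

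\textbf{Step 1: band-limited approximant.} Extend $\bff$ by \cref{lem:Extension} to $\frE_{\dive}\bff\in\widetilde\bH^\tau_{\dive}(\R^d)$. Choose $\sigma\asymp 1/q_X$ and construct a div-free $\bff_\sigma$ with three properties:
\begin{enumerate}[label=(\alph*)]
\item $\bff_\sigma|_X=\bff|_X$;
\item $\|\frE_{\dive}\bff-\bff_\sigma\|_{\bH^\tau(\R^d)}\le C\|\bff\|_{\bH^\tau(\Omega)}$;
\item $\|\bff_\sigma\|_{\widetilde\bH^m(\R^d)}\le C q_X^{\tau-m}\|\bff\|_{\bH^\tau(\Omega)}$.
\end{enumerate}
The plan is to take a band-limited div-free interpolant. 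First, form the scalar band-limited interpolant of Narcowich--Ward--Wendland componentwise. Then restore the div-free constraint by applying the Fourier projector $\mathbf P_{\bxi}$. Since $\mathbf P_{\bxi}$ commutes with Fourier multipliers, it keeps band-limitedness and Sobolev bounds, but it destroys interpolation. Interpolation is therefore enforced afterwards with bump functions built from the div-free kernel at scale $q_X$, exactly as in \cite[Thm.~6]{Fuselier_2008MCoM_sobolev} or via the div-free band-limited construction of \cite{avesani_2025arXiv_sobolev}. Property (c) then follows from a Bernstein inequality for band-limited functions, namely $(1+\|\bxi\|^2)^{m-\tau}\le C\sigma^{2(m-\tau)}$ on the support. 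Property (b) follows from stability of the correction in terms of $\|\bff\|_{\bH^\tau}$. The hypothesis $\lfloor\tau\rfloor>d/2$ is what makes point evaluation and the sampling-based construction controllable.

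\textbf{Step 2: the error splitting.} Because $I_X\bff=I_X\bff_\sigma$, we write $\bff-I_X\bff=(\bff-\bff_\sigma)+(\bff_\sigma-I_X\bff_\sigma)$ on $\Omega$. Note that $\bff-I_X\bff$ vanishes on $X$.

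We apply the generalized sampling inequality of \cite{Arcangeli_2007NumerMath_extension,LeGia_2006JAT_continuous} at the integer order $\lfloor\tau\rfloor$. This is the source of the restriction $\mu<\lfloor\tau\rfloor-d/2$. It gives
\[
\|\bff-I_X\bff\|_{\bW_q^\mu(\Omega)}\le Ch_X^{\tau-\mu-d(1/2-1/q)_+}\|\bff-I_X\bff\|_{\bH^\tau(\Omega)}.
\]
The fractional order $\tau$ is reached from $\lfloor\tau\rfloor$ by interpolation, or by using the fractional version of the sampling inequality.

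It then remains to show $\|\bff-I_X\bff\|_{\bH^\tau(\Omega)}\le C\rho_X^{m-\tau}\|\bff\|_{\bH^\tau(\Omega)}$. The first piece, $\|\bff-\bff_\sigma\|_{\bH^\tau(\Omega)}$, is bounded by (b). For the second piece we use $\|\cdot\|_{\bH^\tau}\le\|\cdot\|_{\widetilde\bH^m}$ together with the native-space minimality of $I_X$ (\cref{thm:native_space_Fourier_decay}), which gives $\|\bff_\sigma-I_X\bff_\sigma\|_{\bH^\tau(\Omega)}\le C\|\bff_\sigma\|_{\widetilde\bH^m}\le Cq_X^{\tau-m}\|\bff\|$. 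This crude bound loses powers of $h$. The better route is to apply the sampling inequality with smoothness $m$ to the second piece and with $\tau$ to the first. This yields
\[
h^{m-\mu-\dots}q^{\tau-m}=h^{\tau-\mu-\dots}\rho_X^{m-\tau},
\]
plus $h^{\tau-\mu-\dots}$ from the first piece. We will carry out the argument in this last form.

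\textbf{Main obstacle.} The hardest step is constructing $\bff_\sigma$ so that it is simultaneously div-free, interpolates $\bff$ on $X$, and satisfies (b)--(c) with constants independent of $X$. The interpolation correction must itself be div-free. We will first try div-free bump functions $\bfK$-generated at scale $q_X$, localized with separation $q_X$, and control their Gram matrix with \cref{thm:min_eig}. If that is too weak, we will fall back on the weighted $\ell_2$ band-limited interpolation argument of \cite{Narcowich_2006ConsApprox_sobolev}, projected with $\mathbf P_{\bxi}$.
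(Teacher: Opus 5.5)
Your proposal is correct and is essentially the paper's argument, which likewise rests on Fuselier's band-limited div-free interpolant (cf.\ \cref{lem:band_estimates}). The only difference is that you apply the sampling inequality separately to $\bff-\bff_\sigma$ at order $\tau$ and to $\bff_\sigma-I_X\bff_\sigma$ at order $m$, whereas the paper applies it once to the full residual after bounding $\|\bff-I_X\bff\|_{\bH^\tau(\Omega)}\le C\rho_X^{m-\tau}\|\bff\|_{\bH^\tau(\Omega)}$; your discarded ``crude'' route also yields that bound if you use \cref{thm:WithinNativeSp} with $\mu=\tau$, $q=2$ instead of bare minimality.

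For Step~1, either interpolate $\frE_{\dive}\bff$ directly with a band-limited div-free kernel, or bound $\|\bff_\sigma-I_X\bff_\sigma\|_{\bH^m(\Omega)}$ through $\frE_{\dive}(\bff_\sigma|_\Omega)$ as in the proof of \cref{thm:WithinNativeSp}. Your componentwise-then-$\mathbf P_{\bxi}$ detour generally produces $\widehat\bff_\sigma(\bxi)\approx\mathbf P_{\bxi}\bld v$ near the origin with $\bld v\neq\mathbf 0$, which for $d=2$ makes $\|\bff_\sigma\|_{\widetilde\bH^m(\R^d)}$ infinite, so $\bff_\sigma$ would fall outside the native space.
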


\begin{proof}
    The proof adapts the arguments presented in \cite[Thm.~3.2]{avesani_2025arXiv_sobolev} and \cite[Thm.~5]{Fuselier_2008MCoM_sobolev}. Since the residual $\bff - I_X\bff$ vanishes on the set $X$, we may apply the sampling inequalities established in \cite[Thm.~2.1]{LeGia_2006JAT_continuous} and \cite[Thm.~2.12]{Narcowich_2005MCoM_sobolev}. Specifically, for any integer $\ell$ such that $0 \le \ell \le \lfloor \tau \rfloor - d/2$, we have
    \begin{equation*}
        \|\bff - I_X\bff\|_{\bW_q^{\ell}(\Omega)} \leq C h_{X}^{\tau - \ell - d(1/2 - 1/q)_+} \|\bff - I_X\bff\|_{\bH^\tau(\Omega)}.
    \end{equation*}
    To bound the $\bH^\tau$-norm on the right-hand side, we employ the techniques from the proof of \cite[Thm.~6]{Fuselier_2008MCoM_sobolev}, which yield the estimate
    \begin{equation*}
        \|\bff - I_X\bff\|_{\bH^\tau(\Omega)} \leq C \rho_{X}^{m - \tau} \|\bff\|_{\bH^\tau(\Omega)}.
    \end{equation*}
    Combining these estimates results in
    \begin{equation*}
        \|\bff - I_X\bff\|_{\bW_q^{\ell}(\Omega)} \leq C h_{X}^{\tau - \ell - d(1/2 - 1/q)_+} \rho_{X}^{m - \tau} \|\bff\|_{\bH^\tau(\Omega)}.
    \end{equation*}
    The result for arbitrary real $\mu$ in the specified range follows by interpolating between the cases $\ell = 0$ and $\ell = \lfloor \tau \rfloor - d/2$.
\end{proof}

\subsection{Inverse estimates}

While the direct estimates in the previous section quantify the approximation power of the trial space, a full stability and convergence analysis also requires inverse estimates, i.e., bounds of stronger norms of discrete trial functions in terms of weaker norms. Such inequalities inevitably involve negative powers of the separation radius $q_X$. In this section, we derive inverse estimates for the trial space $\trialsp$. Our approach exploits the connection between discrete trial functions and a suitable space of band-limited extensions.

To this end, we introduce the band-limited space
\begin{equation*}
    \widetilde\calB^\sigma := \left\{\bff \in \bL_2(\R^d) : \supp(\widehat\bff) \subseteq B(0,\sigma) \quad \text{and} \quad \int_{\R^{d}} \frac{\|\widehat{\bff}(\bxi)\|^{2}}{\|\bxi\|^2} \dd\bxi < \infty \right\}.
\end{equation*}
The next lemma summarizes the properties of $\widetilde\calB^\sigma$ needed below, in particular those related to interpolation and approximation. Although the proof relies on ideas from \cite{Fuselier_2008MCoM_sobolev,Wendland_2009SINUM_divergence}, we provide it here for completeness, since the specific formulation required in our analysis does not appear explicitly in those references.

\begin{lemma}\label{lem:band_estimates}
    Suppose \cref{Assump_domain} holds. Let $m$ and $\tau$ satisfy $m \ge \tau > d/2$. For any div-free vector field $\bu \in \bH^{m}(\Omega)\subseteq\bH^\tau(\Omega)$, there exists a band-limited function
    \begin{equation*}
        \bff_{\sigma,\bu,\tau} \in \calB^\sigma_{\dive} := \left\{\bff \in \widetilde\calB^\sigma : \bxi^\top\widehat\bff(\bxi)=0 \text{ for almost all } \bxi \right\}
    \end{equation*}
    with bandwidth $\sigma = \mathcal{O}(q_X^{-1})$ such that $\bff_{\sigma}:=\bff_{\sigma,\bu,\tau}$ interpolates $\frE_{\dive}\bu$ on $X$, that is,
    \begin{equation}\label{eq:band_1}
        \bff_{\sigma}|_X = (\frE_{\dive}\bu)|_X,
    \end{equation}
    and satisfies
    \begin{subequations}
    \begin{align}
        \|\bff_{\sigma}\|_{\widetilde\bH^{\tau}(\R^d)} &\leq C \|\bu\|_{\bH^{\tau}(\Omega)}, \label{eq:band_2}\\
        \|\bu - \bff_{\sigma}\|_{\bH^{\tau}(\Omega)} &\leq C q_X^{m-\tau} \|\bu\|_{\bH^{m}(\Omega)}. \label{eq:band_3}
    \end{align}
    \end{subequations}
    Moreover, for any $\mu \in [0,m]$ and any $\bff_\sigma \in \calB^\sigma_{\dive}$, the Bernstein inequality
    \begin{equation}\label{eq:band_4}
        \|\bff_\sigma\|_{\widetilde\bH^{m}(\R^d)} \leq C \sigma^{m-\mu} \|\bff_\sigma\|_{\widetilde\bH^{\mu}(\R^d)}
    \end{equation}
    holds.
\end{lemma}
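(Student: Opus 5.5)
We follow the construction in \cite{Narcowich_2006ConsApprox_sobolev,Fuselier_2008MCoM_sobolev}, adapted to the weighted norm $\|\cdot\|_{\widetilde\bH^\tau}$. Set $\bgg := \frE_{\dive}\bu \in \widetilde\bH^{m}_{\dive}(\R^d)$, using \cref{lem:Extension}; it is continuous in both $\bH^\tau$ and $\bH^m$ (apply the lemma at both levels). The construction has two steps. First, we band-limit $\bgg$ by a smooth frequency cutoff. Let $\chi$ be a radial $C^\infty$ function with $\chi=1$ on $B(0,1/2)$ and support in $B(0,1)$. Define $\widehat{\bgg_{\sigma/2}}(\bxi):=\chi(2\bxi/\sigma)\widehat\bgg(\bxi)$.

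This scalar multiplier preserves $\bxi^\top\widehat\bgg(\bxi)=0$, so $\bgg_{\sigma/2}\in\calB^{\sigma/2}_{\dive}$. Moreover, $\|\bgg_{\sigma/2}\|_{\widetilde\bH^\tau}\le\|\bgg\|_{\widetilde\bH^\tau}$. The high-frequency remainder obeys
\[
\|\bgg-\bgg_{\sigma/2}\|_{\widetilde\bH^\tau}^2\le C\int_{\|\bxi\|\ge\sigma/4}\frac{\|\widehat\bgg\|^2}{\|\bxi\|^2}(1+\|\bxi\|^2)^{m+1}(1+\|\bxi\|^2)^{\tau-m}\,\dd\bxi\le C\sigma^{2(\tau-m)}\|\bgg\|_{\widetilde\bH^m}^2 ,
\]
which is of the correct order $q_X^{m-\tau}$ once we take $\sigma\asymp q_X^{-1}$.

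Second, we correct the interpolation error at the nodes. We want $\bff_\sigma=\bgg_{\sigma/2}+\bld{e}$, where $\bld{e}\in\calB^\sigma_{\dive}$ interpolates the residual $\bgg-\bgg_{\sigma/2}$ on $X$. For $\bld{e}$ we take a div-free band-limited interpolant built from a div-free kernel whose Fourier symbol is compactly supported, $\widehat{\Psi}_\sigma(\bxi)=\eta(\bxi/\sigma)(\|\bxi\|^2\bfI-\bxi\bxi^\top)$ with a radial bump $\eta\ge0$. For instance, $\Psi_\sigma=(-\Delta\bfI+\nabla\nabla^\top)\phi_\sigma$ with $\widehat{\phi_\sigma}=\eta(\cdot/\sigma)$ a Paley--Wiener bump. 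The construction then parallels \cite[Thm.~3.4]{Narcowich_2006ConsApprox_sobolev}: choose $\bld{e}=\sum_j\Psi_\sigma(\cdot-\bx_j)\bc_j$ of minimal $\widetilde\bH^\tau$-type native norm. Using the Fuselier-type lower eigenvalue bound (the same argument behind \cref{thm:min_eig}, now applied to $\Psi_\sigma$ with $\sigma=\kappa/q_X$ and $\kappa$ large), the Gram matrix is uniformly well conditioned at scale $q_X$. This yields
\[
\|\bld{e}\|_{\widetilde\bH^\tau}\le C\,\|\bgg-\bgg_{\sigma/2}\|_{\widetilde\bH^\tau},
\]
with $C$ independent of $X$ (this is the ``band-limited interpolants are as good as the function'' estimate of \cite{Narcowich_2006ConsApprox_sobolev}). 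With this in hand, \eqref{eq:band_1} holds by construction. The bound \eqref{eq:band_2} follows from the triangle inequality and the continuity of $\frE_{\dive}$ in $\bH^\tau$. For \eqref{eq:band_3}, restrict to $\Omega$: $\|\bu-\bff_\sigma\|_{\bH^\tau(\Omega)}\le\|\bgg-\bgg_{\sigma/2}\|_{\widetilde\bH^\tau}+\|\bld{e}\|_{\widetilde\bH^\tau}\le Cq_X^{m-\tau}\|\bu\|_{\bH^m(\Omega)}$. Here we use that $\|\cdot\|_{\bH^\tau}\le\|\cdot\|_{\widetilde\bH^\tau}$, which holds because $(1+\|\bxi\|^2)/\|\bxi\|^2\ge1$.

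The Bernstein inequality \eqref{eq:band_4} is elementary. On $\supp\widehat\bff_\sigma\subseteq B(0,\sigma)$ we have $(1+\|\bxi\|^2)^{m-\mu}\le(1+\sigma^2)^{m-\mu}\le C\sigma^{2(m-\mu)}$ for $\sigma\ge1$, which is the case for small $q_X$. Inserting this into \eqref{eq:H_norm} gives the claim directly.

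The main obstacle is the correction step, namely bounding $\|\bld{e}\|_{\widetilde\bH^\tau}$ by the residual's norm uniformly in $X$. This requires two estimates. The first is an $\ell_2$ bound on the data, $\sum_j\|(\bgg-\bgg_{\sigma/2})(\bx_j)\|^2\lesssim q_X^{-d}\cdot$(local Sobolev norms), obtained through a sampling/trace-type inequality with $\tau>d/2$ and the separation of $X$. The second is a norm-equivalence between $\|\bc\|_{\ell_2}$ and the $\widetilde\bH^\tau$ norm of $\sum_j\Psi_\sigma(\cdot-\bx_j)\bc_j$ at scale $q_X$, which is where the weight $\|\bxi\|^{-2}$ must be controlled. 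If this direct route becomes messy, we fall back on quoting the div-free band-limited interpolation result of \cite[Sec.~4]{Fuselier_2008MCoM_sobolev} or \cite[Prop.~3.x]{Wendland_2009SINUM_divergence} and only check that the weight in $\widetilde\bH^\tau$ is harmless.
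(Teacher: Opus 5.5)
Your proposal is correct, and it reaches \eqref{eq:band_2}--\eqref{eq:band_3} by a different route from the paper.

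\textbf{How the paper argues.} The paper never constructs $\bff_\sigma$. It quotes from \cite{Fuselier_2008MCoM_sobolev} the existence of a band-limited interpolant with $\|\bff_\sigma-\frE_{\dive}\bu\|_{\widetilde\bH^\tau}\le C\|\frE_{\dive}\bu\|_{\widetilde\bH^\tau}$, and \eqref{eq:band_2} then follows from the triangle inequality. It obtains \eqref{eq:band_3} indirectly through the kernel interpolant: since $I_X\bu=I_X\bff_\sigma$, it applies \cref{thm:WithinNativeSp} to both $\bu$ and $\bff_\sigma$ and finally uses $h_X\le Cq_X$.

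\textbf{How your route differs.} You build $\bff_\sigma=\bgg_{\sigma/2}+\bld{e}$ explicitly, in the Narcowich--Ward--Wendland style. You then read \eqref{eq:band_3} directly off the cutoff remainder $O(\sigma^{\tau-m})$ together with the correction bound, and never touch $I_X$.

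\textbf{What each route buys.} The paper's argument is shorter because it recycles the direct estimate. However, it outsources the key existence and stability claim. It also tacitly needs $\|\bff_\sigma\|_{\widetilde\bH^m}\le C\|\bu\|_{\bH^m(\Omega)}$, which is not \eqref{eq:band_2}, since that estimate is stated only at order $\tau$. Your route is self-contained and adapted to the weighted norm. It yields $q_X^{m-\tau}$ for any separated $X$ without invoking quasi-uniformity, and it needs no $\widetilde\bH^m$-stability of $\bff_\sigma$.

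\textbf{Shared and minor points.} Your Bernstein argument coincides with the paper's, and your restriction $\sigma\ge1$ is genuinely needed; the paper leaves it implicit. Both routes tacitly assume that a single $\frE_{\dive}$ is bounded at orders $\tau$ and $m$ simultaneously. Note also that the kernel interpolant $\bld{e}$ minimizes the native norm of $\Psi_\sigma$, not a $\widetilde\bH^\tau$ norm.

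\textbf{A step to make explicit.} When you write out the correction step, show where the high-frequency structure of $\bld{r}:=\bgg-\bgg_{\sigma/2}$ enters. Take $\sigma=\kappa/q_X$ with $\kappa\ge2\tilde c$. Fuselier's bound with $\delta=\sigma/2$ gives $\lambda_{\min}\ge C\sigma^{d+2}\inf_{B(0,1/2)}\eta$. Comparing the Fourier weights gives $\|\bld{e}\|^2_{\widetilde\bH^\tau}\le C\sigma^{2\tau+2}\|\bld{e}\|^2_{\calN_{\Psi_\sigma}}$. Together with the local sampling inequality on the disjoint balls $B(\bx_j,q_X)$, this yields
\[
\|\bld{e}\|_{\widetilde\bH^\tau}^2\le C\sigma^{2\tau-d}\sum_j\|\bld{r}(\bx_j)\|^2\le Cq_X^{-2\tau}\|\bld{r}\|_{\bL_2(\R^d)}^2+C|\bld{r}|_{\bH^\tau(\R^d)}^2 .
\]
The first term is $O(\|\bld{r}\|^2_{\bH^\tau})$ only because $\widehat{\bld{r}}$ vanishes on $B(0,\sigma/4)$. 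That gives $\|\bld{r}\|_{\bL_2}\le C\sigma^{-\tau}\|\bld{r}\|_{\bH^\tau}$. For a general residual this step would lose a factor $q_X^{-\tau}$, which is exactly why the frequency cutoff must precede the correction.
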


\begin{proof}
    We first prove \eqref{eq:band_2}. By the construction of the band-limited interpolant (see, for example, \cite{Fuselier_2008MCoM_sobolev}), together with the continuity of the extension operator $\frE_{\dive}$ from \cref{lem:Extension}, we obtain
    \begin{equation*}
        \begin{aligned}
        \|\bff_{\sigma}\|_{\widetilde\bH^{\tau}(\R^d)}
        &\le \|\bff_{\sigma} - \frE_{\dive} \bu\|_{\widetilde\bH^{\tau}(\R^d)} + \|\frE_{\dive} \bu\|_{\widetilde\bH^{\tau}(\R^d)} \\
        &\le C_1 \|\frE_{\dive} \bu\|_{\widetilde\bH^{\tau}(\R^d)} + C_2 \|\bu\|_{\bH^{\tau}(\Omega)} \\
        &\le C \|\bu\|_{\bH^{\tau}(\Omega)}.
        \end{aligned}
    \end{equation*}

    We next prove \eqref{eq:band_3}. Since $\bff_\sigma$ interpolates $\frE_{\dive}\bu$ on $X$, we have $I_X\bu = I_X\bff_\sigma$. By applying the triangle inequality, combined with the error estimates for RBF interpolation from \cref{thm:WithinNativeSp} and the band-limited approximation bounds from \cite[Lem.~5]{Fuselier_2008MCoM_sobolev}, we get
    \begin{equation*}
        \begin{aligned}
        \|\bu - \bff_{\sigma}\|_{\bH^{\tau}(\Omega)} &\le \|\bu - I_X\bu\|_{\bH^{\tau}(\Omega)} + \|I_X \bff_{\sigma} - \bff_{\sigma}\|_{\bH^{\tau}(\Omega)} \\
        &\le C h_{X}^{m-\tau} \|\bu\|_{\bH^{m}(\Omega)} + C h_{X}^{m-\tau} \|\bff_{\sigma}\|_{\widetilde\bH^{m}(\R^d)}.
        \end{aligned}
    \end{equation*}
    Using the stability estimate \eqref{eq:band_2} to bound the second term, and noting that $h_X \le C q_X$ for quasi-uniform points, the result follows.
    
    Finally, we verify the Bernstein inequality \eqref{eq:band_4}. For any $\bxi \in B(0, \sigma)$, the inequality $(1+\|\bxi\|^2)^{m+1} \le C\sigma^{2(m-\mu)}(1+\|\bxi\|^2)^{\mu+1}$ holds. Consequently,
    \begin{align*}
        \|\bff_\sigma\|_{\widetilde\bH^{m}(\R^d)}^2 &= (2\pi)^{-d/2} \int_{\|\bxi\|\le \sigma} \frac{\|\widehat{\bff}_\sigma(\bxi)\|^{2}}{\|\bxi\|^2}(1+\|\bxi\|^2)^{m+1} \dd\bxi \\
        &\le C (2\pi)^{-d/2} \sigma^{2(m-\mu)} \int_{\|\bxi\|\le \sigma} \frac{\|\widehat{\bff}_\sigma(\bxi)\|^{2}}{\|\bxi\|^2}(1+\|\bxi\|^2)^{\mu+1} \dd\bxi \\
        &= C \sigma^{2(m-\mu)} \|\bff_\sigma\|_{\widetilde\bH^{\mu}(\R^d)}^2.
    \end{align*}
    Taking the square root completes the proof.
\end{proof}

The following theorem provides a Bernstein-type inequality relating the $\bH^m$-norm to $\bH^\tau$ with $d/2 < \tau \leq m$. 
\begin{theorem}\label{thm:first_inv1}
    Let $m$ and $\tau$ be real numbers satisfying $d/2 < \tau \leq m$. There exists a constant $C$, independent of the point set $X$, such that for all trial functions $\bu \in \trialsp$, the following estimate holds:
    \begin{equation}\label{eq:inv_est_1}
        \|\bu\|_{\bH^m(\Omega)} \leq C q_X^{-(m-\tau)} \|\bu\|_{\bH^\tau(\Omega)}.
    \end{equation}
\end{theorem}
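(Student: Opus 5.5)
Our plan is to follow the band-limited comparison argument of Narcowich--Ward--Wendland type, using the tools of \cref{lem:band_estimates}. Fix $\bu\in\trialsp$. Since $\bu$ is a finite combination of columns of $\bfK_\dive$, it lies in the native space $\calN_{\bfK_\dive}\cong\widetilde\bH^m_\dive(\R^d)$ and is div-free. Hence $\bu|_\Omega\in\bH^m_\dive(\Omega)$, and \cref{lem:band_estimates} applies to it. Let $\bff_\sigma\in\calB^\sigma_\dive$ be the band-limited function with $\sigma=\calO(q_X^{-1})$ that interpolates $\frE_\dive\bu$ on $X$. By \eqref{eq:band_1}, $\bff_\sigma|_X=\bu|_X$. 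Because $\bu$ lies in the trial space, it equals its own interpolant, so $I_X\bff_\sigma=\bu$.

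We then split
\[
\|\bu\|_{\bH^m(\Omega)}\le \|\bu-\bff_\sigma\|_{\bH^m(\Omega)}+\|\bff_\sigma\|_{\bH^m(\Omega)}.
\]
For the second term we use $\|\cdot\|_{\bH^m(\Omega)}\le\|\cdot\|_{\bH^m(\R^d)}\le\|\cdot\|_{\widetilde\bH^m(\R^d)}$; the last inequality holds because $(1+\|\bxi\|^2)/\|\bxi\|^2\ge1$. Then the Bernstein inequality \eqref{eq:band_4} with $\mu=\tau$, followed by the stability bound \eqref{eq:band_2}, gives
\[
\|\bff_\sigma\|_{\bH^m(\Omega)}\le C\sigma^{m-\tau}\|\bff_\sigma\|_{\widetilde\bH^\tau(\R^d)}\le Cq_X^{-(m-\tau)}\|\bu\|_{\bH^\tau(\Omega)}.
\]

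The first term is the main obstacle, since we cannot use the error bound \eqref{eq:band_3}: it would put $\|\bu\|_{\bH^m}$ on the right-hand side. Instead we use $\bu-\bff_\sigma=I_X\bff_\sigma-\bff_\sigma$. By the native-space best-approximation (Pythagorean) property of the interpolant, $\|I_X\bff_\sigma\|_{\calN}\le\|\bff_\sigma\|_{\calN}$. Combined with the norm equivalence of \cref{thm:native_space_Fourier_decay} for $\beta=\calD\psi_m$, this yields
\[
\|\bff_\sigma-I_X\bff_\sigma\|_{\bH^m(\Omega)}\le C\|\bff_\sigma-I_X\bff_\sigma\|_{\calN_{\bfK_\dive}}\le C\|\bff_\sigma\|_{\calN_{\bfK_\dive}}\le C\|\bff_\sigma\|_{\widetilde\bH^m(\R^d)}.
\]
Here $\bff_\sigma$ belongs to the native space because it is band-limited, div-free and has the weighted $L_2$ bound built into $\widetilde\calB^\sigma$. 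Applying \eqref{eq:band_4} and \eqref{eq:band_2} exactly as before bounds this term by $Cq_X^{-(m-\tau)}\|\bu\|_{\bH^\tau(\Omega)}$ as well.

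Adding the two bounds gives \eqref{eq:inv_est_1}. The points to check carefully are two. First, the constant in \eqref{eq:band_2} must not depend on $X$; it comes from the continuity of $\frE_\dive$ on $\bH^\tau$ (\cref{lem:Extension}) and the $X$-uniform band-limited interpolation bound with $\sigma\sim q_X^{-1}$. Second, the lemma's hypothesis $\bu\in\bH^m$ is needed only to construct $\bff_\sigma$, while the estimate we use involves only $\|\bu\|_{\bH^\tau(\Omega)}$. Finally, we assume $q_X\lesssim1$, so that $\sigma\ge1$ and the Bernstein factor $(1+\sigma^2)^{(m-\tau)/2}$ is comparable to $\sigma^{m-\tau}$.
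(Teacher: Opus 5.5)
Your proposal is correct and follows essentially the same route as the paper. It uses the same band-limited $\bff_\sigma$ from \cref{lem:band_estimates}, the identity $\bu = I_X\bff_\sigma$, the same triangle-inequality split, and then \eqref{eq:band_4} followed by \eqref{eq:band_2}. The step the paper attributes to ``the approximation property of the interpolant'' is exactly your bound $\|\bff_\sigma - I_X\bff_\sigma\|_{\calN}\le\|\bff_\sigma\|_{\calN}$ (the native-space Pythagorean identity) combined with the norm equivalence to $\widetilde\bH^m(\R^d)$; you simply make it explicit.
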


\begin{proof}
    By \cref{lem:band_estimates}, for any $\bu \in \trialsp\subseteq\widetilde\bH^m(\R^d)\subseteq\widetilde\bH^\tau(\R^d)$, there exists a band-limited function $\bff_\sigma:=\bff_{\sigma,\bu,\tau} \in \Bdel$ with bandwidth $\sigma = \mathcal{O}(q_X^{-1})$ such that properties \eqref{eq:band_1}--\eqref{eq:band_4} hold. We first bound the $\bH^m$-norm of the discrete function. Using the triangle inequality and the identity $\bu = I_X \bu = I_X \bff_\sigma$, we derive:
    \begin{equation*}
        \begin{aligned}
            \|\bu\|_{\bH^m(\Omega)} &\leq \|\bu - \bff_\sigma\|_{\bH^m(\Omega)} + \|\bff_\sigma\|_{\bH^m(\Omega)} \\
            &= \|I_X \bff_\sigma - \bff_\sigma\|_{\bH^m(\Omega)} + \|\bff_\sigma\|_{\bH^m(\Omega)} \\
            &\leq C \|\bff_\sigma\|_{\widetilde\bH^m(\R^d)}.
        \end{aligned}
    \end{equation*}
    Here, the final inequality follows from the approximation property of the interpolant $I_X$.
    
    To further bound the right-hand side, we apply the Bernstein inequality \eqref{eq:band_4} and the stability estimate \eqref{eq:band_2}. This yields:
    \begin{equation*}
        \|\bff_\sigma\|_{\widetilde\bH^m(\R^d)} \leq C q_X^{-(m-\tau)} \|\bff_\sigma\|_{\widetilde\bH^\tau(\R^d)} \leq C q_X^{-(m-\tau)} \|\bu\|_{\bH^\tau(\Omega)}.
    \end{equation*}
    Combining these estimates gives the desired result.
\end{proof}

Building on \cref{thm:first_inv1}, we can derive a general inverse estimate relating the Sobolev norm of any order $\mu \in [0, m]$ to the $L_2$-norm. This result is particularly useful for stability and error analysis in time-dependent problems.

\begin{theorem}
    Under \cref{Assump_domain}, for any $\mu \in [0, m]$, there exists a constant $C$ independent of $q_X$ such that the Bernstein-type inequality
    \begin{equation}\label{eq:BernIneq}
        \|\bu\|_{\bH^\mu(\Omega)} \leq C q_X^{-\mu} \|\bu\|_{\bL_2(\Omega)}
    \end{equation}
    holds for all point sets $X \subset \Omega$ and all trial functions $\bu \in \trialsp$.
\end{theorem}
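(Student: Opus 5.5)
The plan is to first prove the endpoint case $\mu=m$, namely $\|\bu\|_{\bH^m(\Omega)}\le Cq_X^{-m}\|\bu\|_{\bL_2(\Omega)}$, and then recover every intermediate $\mu\in[0,m]$ by interpolation. For the interpolation step, note that $\bH^\mu(\Omega)$ is, up to equivalent norms, the interpolation space $[\bL_2(\Omega),\bH^m(\Omega)]_{\mu/m}$; this is legitimate for Lipschitz (indeed $C^{\lceil m\rceil,1}$) domains. Consequently
\[
\|\bu\|_{\bH^\mu(\Omega)}\le C\|\bu\|_{\bL_2(\Omega)}^{1-\mu/m}\|\bu\|_{\bH^m(\Omega)}^{\mu/m}.
\]
Inserting the endpoint bound gives $C q_X^{-\mu}\|\bu\|_{\bL_2(\Omega)}$. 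The whole task therefore reduces to the case $\mu=m$.

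\textbf{Step 1: reduce to an $\bH^\tau$ bound.} For the endpoint I would start from \cref{thm:first_inv1} with some fixed $\tau\in(d/2,m]$, which gives $\|\bu\|_{\bH^m(\Omega)}\le Cq_X^{-(m-\tau)}\|\bu\|_{\bH^\tau(\Omega)}$. It then suffices to show $\|\bu\|_{\bH^\tau(\Omega)}\le Cq_X^{-\tau}\|\bu\|_{\bL_2(\Omega)}$ for trial functions.

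\textbf{Step 2: the $\bH^\tau$ bound, a first attempt.} To get this I would use the same interpolation inequality with exponent $\theta=\tau/m$:
\[
\|\bu\|_{\bH^\tau}\le C\|\bu\|_{\bL_2}^{1-\theta}\|\bu\|_{\bH^m}^{\theta}\le C\|\bu\|_{\bL_2}^{1-\theta}\bigl(q_X^{-(m-\tau)}\|\bu\|_{\bH^\tau}\bigr)^{\theta}.
\]
Dividing by $\|\bu\|_{\bH^\tau}^{\theta}$ and raising to the power $1/(1-\theta)$ yields $\|\bu\|_{\bH^\tau}\le Cq_X^{-(m-\tau)\theta/(1-\theta)}\|\bu\|_{\bL_2}$. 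The resulting exponent is $(m-\tau)\tau/(m-\tau)=\tau$, which is exactly the exponent we need. This absorption trick is valid whenever $\tau<m$. Choosing $\tau$ strictly between $d/2$ and $m$ is possible because $m>d/2$ under \cref{Assump_domain}, since \eqref{eq:psi_hat_decay} requires it.

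\textbf{Step 2 revisited: the circularity problem.} Plugging the Step 2 bound back into Step 1 gives the endpoint bound with exponent $(m-\tau)+\tau=m$, as desired. On inspection, however, this argument is circular. It uses only \cref{thm:first_inv1} plus interpolation, and an inequality of the form $\|u\|_{\bH^m}\le A\|u\|_{\bH^\tau}$ together with interpolation does not by itself yield an $\bL_2$ bound. Concretely, the absorption step shows
\[
\|\bu\|_{\bH^\tau}^{1-\theta}\le C\|\bu\|_{\bL_2}^{1-\theta}q_X^{-(m-\tau)\theta},
\]
which is a genuine algebraic consequence. Rechecking it: from $\|\bu\|_{\bH^\tau}\le C\|\bu\|_{\bL_2}^{1-\theta}A^\theta\|\bu\|_{\bH^\tau}^\theta$ we get $\|\bu\|_{\bH^\tau}^{1-\theta}\le CA^\theta\|\bu\|_{\bL_2}^{1-\theta}$. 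So the inequality is valid after all, and the plan goes through, provided the interpolation inequality holds on $\Omega$ with constants independent of $X$. That independence is standard.

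\textbf{Main obstacle.} I expect the delicate point to be justifying the interpolation inequality for div-free fields on $\Omega$. Here I would simply apply the scalar Sobolev interpolation componentwise, because the $\bH^\mu$-norms are defined componentwise and the div-free constraint plays no role in this step. A second point to check is the case $\mu=0$, which is trivial.
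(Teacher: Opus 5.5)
Your argument is correct and is essentially the paper's proof. Fix $\tau\in(d/2,m)$, combine the interpolation inequality $\|\bu\|_{\bH^\tau(\Omega)}\le C\|\bu\|_{\bL_2(\Omega)}^{1-\tau/m}\|\bu\|_{\bH^m(\Omega)}^{\tau/m}$ with \cref{thm:first_inv1}, and absorb to obtain $\|\bu\|_{\bH^\tau(\Omega)}\le Cq_X^{-\tau}\|\bu\|_{\bL_2(\Omega)}$; then chain back through \cref{thm:first_inv1} for $\mu=m$. Your circularity worry was unfounded, as you verified yourself. The only difference is ordering: you settle $\mu=m$ first and interpolate once between $\bL_2(\Omega)$ and $\bH^m(\Omega)$, which covers every $\mu\in[0,m]$ uniformly, whereas the paper interpolates between $\bL_2(\Omega)$ and $\bH^\tau(\Omega)$ for $\mu<\tau$ and treats $\mu=m$ separately.
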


\begin{proof}
    We first establish the result for a fixed intermediate regularity $\tau \in (d/2, m)$. By the Gagliardo--Nirenberg interpolation inequality, there exists a constant $C$ such that
    \begin{equation*}
        \|\bu\|_{\bH^\tau(\Omega)} \leq C \|\bu\|_{\bL_2(\Omega)}^{1-\tau/m} \|\bu\|_{\bH^m(\Omega)}^{\tau/m}.
    \end{equation*}
    Substituting the inverse estimate from \cref{thm:first_inv1} into the term $\|\bu\|_{\bH^m(\Omega)}$, we have
    \begin{equation*}
        \begin{aligned}
            \|\bu\|_{\bH^\tau(\Omega)} &\leq C \|\bu\|_{\bL_2(\Omega)}^{1-\tau/m} \left( q_X^{-(m-\tau)} \|\bu\|_{\bH^\tau(\Omega)} \right)^{\tau/m} \\
            &= C q_X^{-\tau(1-\tau/m)} \|\bu\|_{\bL_2(\Omega)}^{1-\tau/m} \|\bu\|_{\bH^\tau(\Omega)}^{\tau/m}.
        \end{aligned}
    \end{equation*}
    Dividing both sides by $\|\bu\|_{\bH^\tau(\Omega)}^{\tau/m}$ (assuming $\|\bu\|_{\bH^\tau(\Omega)} \neq 0$) and raising the resulting expression to the power $1/(1-\tau/m)$ yields
    \begin{equation}\label{eq:L2_inv_tau}
        \|\bu\|_{\bH^\tau(\Omega)} \leq C q_X^{-\tau} \|\bu\|_{\bL_2(\Omega)}.
    \end{equation}
    
    Now consider an arbitrary $\mu \in [0, m)$. If $\mu < \tau$, we apply the Gagliardo--Nirenberg inequality again, interpolating between $L_2(\Omega)$ and $H^\tau(\Omega)$:
    \begin{equation*}
        \begin{aligned}
            \|\bu\|_{\bH^\mu(\Omega)} &\leq C \|\bu\|_{\bL_2(\Omega)}^{1-\mu/\tau} \|\bu\|_{\bH^\tau(\Omega)}^{\mu/\tau} \\
            &\leq C \|\bu\|_{\bL_2(\Omega)}^{1-\mu/\tau} \left( q_X^{-\tau} \|\bu\|_{\bL_2(\Omega)} \right)^{\mu/\tau} \\
            &= C q_X^{-\mu} \|\bu\|_{\bL_2(\Omega)}.
        \end{aligned}
    \end{equation*}
    Finally, for the case $\mu = m$, we select a specific $\tau_0$ such that $d/2 < \tau_0 < m$. Combining \cref{thm:first_inv1} with the estimate \eqref{eq:L2_inv_tau} derived for $\tau_0$, we obtain
    \begin{equation*}
        \|\bu\|_{\bH^m(\Omega)} \leq C q_X^{-(m-\tau_0)} \|\bu\|_{\bH^{\tau_0}(\Omega)} \leq C q_X^{-m+\tau_0} q_X^{-\tau_0} \|\bu\|_{\bL_2(\Omega)} = C q_X^{-m} \|\bu\|_{\bL_2(\Omega)}.
    \end{equation*}
    This completes the proof.
\end{proof}

\begin{theorem}
Under \cref{Assump_domain}, let $(X_n)_{n\in\mathbb{N}}\subset\Omega$ be a nested sequence of quasi-uniform point sets exhibiting geometric decay; specifically, assume there exist constants $c_0', c_0 > 0$ and $a \in (0,1)$ such that $c_0' a^{n} \le q_{X_n} \le h_{X_n} \le c_0 a^{n}$. Let $\bff \in \bL_2(\Omega)$ and assume there exists a sequence of approximants $(\bff_{X_n})_{n\in\mathbb{N}} \subset \calN_\bfK$, with $\bff_{X_n} \in \calV_{\bfK,X_n}$, satisfying
\begin{equation}\label{eq:Inv_cond}
    \|\bff -\bff_{X_n}\|_{\bL_2(\Omega)} \leq c_{\bff} h_{X_n, \Omega}^\tau
\end{equation}
for some $c_{\bff} > 0$ and $\tau \in (0, m]$. Then $\bff \in \bH^{\tau'}(\Omega)$ for all $\tau' \in (0, \tau)$.
\end{theorem}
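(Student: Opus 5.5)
This is an inverse (Bernstein-implies-smoothness) theorem, and the plan is the standard telescoping argument combined with the $K$-functional characterization of interpolation spaces. We set $\bff_{X_0}:=\mathbf 0$ (or start from the first approximant) and write, for $n\ge1$, the increments $\bgg_n:=\bff_{X_n}-\bff_{X_{n-1}}$.

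\textbf{Step 1: increments lie in a single trial space.} Because the $X_n$ are nested, $\calV_{\bfK,X_{n-1}}\subseteq\calV_{\bfK,X_n}$, so $\bgg_n\in\calV_{\bfK,X_n}$. By the triangle inequality and \eqref{eq:Inv_cond},
\[
\|\bgg_n\|_{\bL_2(\Omega)}\le c_{\bff}\bigl(h_{X_n}^\tau+h_{X_{n-1}}^\tau\bigr)\le C c_{\bff}\, a^{n\tau}.
\]

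\textbf{Step 2: Bernstein bound on each increment.} Apply the Bernstein inequality \eqref{eq:BernIneq} with $\mu=m$ on $X_n$. We will use it for the full kernel trial space $\calV_{\bfK,X_n}$; the div-free argument carries over verbatim, since the band-limited construction of \cref{lem:band_estimates} and \cref{thm:first_inv1} only use the native-space norm equivalence. This gives
\[
\|\bgg_n\|_{\bH^m(\Omega)}\le C q_{X_n}^{-m}\|\bgg_n\|_{\bL_2(\Omega)}\le C c_{\bff}\, a^{n(\tau-m)},
\]
using $q_{X_n}\ge c_0'a^n$.

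\textbf{Step 3: $K$-functional estimate.} Since $\bff_{X_n}\to\bff$ in $\bL_2(\Omega)$, we have $\bff=\sum_n\bgg_n$ in $\bL_2$. For $t\in(0,1]$, choose $N$ with $a^{N}\asymp t^{1/m}$, and split $\bff=\bff_{X_N}+(\bff-\bff_{X_N})$. Then
\[
K(\bff,t):=\inf_{\bgg}\Bigl(\|\bff-\bgg\|_{\bL_2}+t\|\bgg\|_{\bH^m}\Bigr)\le c_{\bff}a^{N\tau}+t\sum_{n\le N}C a^{n(\tau-m)}.
\]
The geometric sum is dominated by its last term because $\tau-m\le0$. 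When $\tau=m$ a factor $N\sim|\log t|$ appears, which is harmless. This yields $K(\bff,t)\le C t^{\tau/m}(1+|\log t|)$. Consequently, for every $\theta<\tau/m$, the integral $\int_0^1 (t^{-\theta}K(\bff,t))^2\,\dd t/t$ is finite, so $\bff\in(\bL_2(\Omega),\bH^m(\Omega))_{\theta,2}$.

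\textbf{Step 4: identification and expected obstacle.} We identify $(\bL_2(\Omega),\bH^m(\Omega))_{\theta,2}=\bH^{\theta m}(\Omega)$ componentwise. This holds for Lipschitz (here $C^{\lceil m\rceil,1}$) domains through a Stein-type extension operator, and it is consistent with the paper's definition of fractional Sobolev spaces via interpolation. Taking $\theta m=\tau'$ gives $\bff\in\bH^{\tau'}(\Omega)$ for every $\tau'<\tau$.

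The loss of the endpoint comes from two sources: the logarithm in the case $\tau=m$, and the fact that the estimate controls $(\cdot,\cdot)_{\tau/m,\infty}$ rather than $(\cdot,\cdot)_{\tau/m,2}$. The main obstacle is justifying Step 2, namely that the Bernstein inequality \eqref{eq:BernIneq} holds uniformly for the trial spaces $\calV_{\bfK,X_n}$ with constants independent of $n$ under the stated quasi-uniformity. We would address this by reusing \cref{thm:first_inv1}: its constants depend only on $\rho_{X}$, which is uniformly bounded here since $h_{X_n}/q_{X_n}\le c_0/c_0'$.
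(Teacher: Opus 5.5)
Your argument is correct, but it takes a different route from the paper. The paper never passes through $\bH^m$. It applies \eqref{eq:BernIneq} directly with $\mu=\tau'$ to $\bff_{X_{n+1}}-\bff_{X_n}$, which gives $\|\bff_{X_{n+1}}-\bff_{X_n}\|_{\bH^{\tau'}(\Omega)}\le C a^{n(\tau-\tau')}$. This is summable because $\tau'<\tau$, so $(\bff_{X_n})$ is Cauchy in $\bH^{\tau'}(\Omega)$, and its limit must coincide with the $\bL_2$-limit $\bff$. That route is shorter, has no logarithm at $\tau=m$, and needs no identification of interpolation spaces at the end. It does, however, use the Bernstein inequality at every intermediate order $\tau'$, which the paper itself derived via Gagliardo--Nirenberg, so interpolation theory is hidden rather than avoided.

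Your $K$-functional argument needs only the endpoint case $\mu=m$ together with $(\bL_2(\Omega),\bH^m(\Omega))_{\theta,2}=\bH^{\theta m}(\Omega)$, and it proves more. For $\tau<m$, the bound $K(\bff,t)\le Ct^{\tau/m}$ places $\bff$ in $(\bL_2(\Omega),\bH^m(\Omega))_{\tau/m,\infty}$, i.e.\ a Besov space $B^{\tau}_{2,\infty}(\Omega)$. That is strictly stronger than membership in every $\bH^{\tau'}(\Omega)$ with $\tau'<\tau$.

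Two minor remarks. First, you should also mention the part $\int_1^\infty$ of the interpolation norm; it is trivially finite since $K(\bff,t)\le\|\bff\|_{\bL_2(\Omega)}$ there. Second, the paper's proof also transfers \eqref{eq:BernIneq} from $\trialsp$ to $\calV_{\bfK,X_n}$ without comment, but this transfer is not quite verbatim. \cref{lem:Extension} is a div-free extension, and for the full kernel you need an extension into $\widetilde{\bH}^m(\R^d)$. In $d=2$ that extension requires a mean-zero correction, because of the weight $\|\bxi\|^{-2}$ near the origin.
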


\begin{proof}
Following the approach in \cite{Narcowich_2007FoCM_direct,Schaback_2002MCoM_inverse,Wenzel_2025MCoM_sharp}, we consider the difference between successive approximants, $\bff_{X_{n+1}} - \bff_{X_n}$. The nestedness assumption ensures that $\bff_{X_{n+1}} - \bff_{X_n} \in \calV_{\bfK,X_{n+1}}$. Applying the Bernstein inequality \eqref{eq:BernIneq} with $\tau' < \tau$ yields
\[
    \|\bff_{X_{n+1}} - \bff_{X_n}\|_{\bH^{\tau'}(\Omega)} \leq C q_{X_{n+1}}^{-\tau'} \|\bff_{X_{n+1}} - \bff_{X_n}\|_{\bL_2(\Omega)}.
\]
Here, we have used the property that $X_n \subset X_{n+1}$, which implies $q_{X_n \cup X_{n+1}} = q_{X_{n+1}}$. By the triangle inequality and the assumed decay rates, we obtain
\begin{align*}
    \|\bff_{X_{n+1}} - \bff_{X_n}\|_{\bH^{\tau'}(\Omega)} &\leq C q_{X_{n+1}}^{-\tau'} \|(\bff_{X_{n+1}} - \bff) - (\bff_{X_n} - \bff)\|_{\bL_2(\Omega)} \\
    &\leq C q_{X_{n+1}}^{-\tau'} \left( c_{\bff} h_{X_{n+1}, \Omega}^\tau + c_{\bff} h_{X_n, \Omega}^\tau \right) \\
    &\leq 2 c_{\bff} C q_{X_{n+1}}^{-\tau'} h_{X_n, \Omega}^\tau \\
    &\leq 2 c_{\bff} C c_0'^{-\tau'} c_0^\tau a^{-\tau'} a^{n(\tau - \tau')}.
\end{align*}
Standard arguments demonstrate that $(\bff_{X_n})_{n\in\mathbb{N}}$ is a Cauchy sequence in $\bH^{\tau'}(\Omega)$ and therefore converges to $\bff \in \bH^{\tau'}(\Omega)$ by the completeness of the space; see \cite{avesani_2025arXiv_sobolev,Narcowich_2007FoCM_direct,Wenzel_2025MCoM_sharp}.
\end{proof}

	\section{Numerical examples}
\label{sec:Numer_Examp}

    \definecolor{myred}{RGB}{195,0,0}
	\definecolor{myblue}{RGB}{0,90,170}
	\definecolor{mygreen}{RGB}{0,140,0}
	\definecolor{myreddark}{RGB}{140,0,0}
	\definecolor{myredlight}{RGB}{255,100,100}
	\definecolor{mybluedark}{RGB}{0,50,120}
	\definecolor{mybluelight}{RGB}{100,180,255}
	\definecolor{mygreendark}{RGB}{0,90,0}
	\definecolor{mygreenlight}{RGB}{100,220,100}
	\definecolor{mypurple}{RGB}{120,0,120}
	\definecolor{myorange}{RGB}{230,120,0}
	\definecolor{mycyan}{RGB}{0,150,150}
	\definecolor{myyellow}{RGB}{210,180,0}
	\definecolor{mybrown}{RGB}{150,100,50}
	\definecolor{mygray}{RGB}{120,120,120}

In this section, we present numerical experiments designed to validate the theoretical findings on convergence rates and stability for our generalized matrix-valued kernels. We compare the classical potential-based construction in \eqref{eq:kernel_form_derived}--\eqref{eq:pot_coeffs} with the more general isotropic ansatz
\[
\mathbf{K}(\boldsymbol{x},\boldsymbol{y})=\alpha(r)\mathbf{I}+\beta(r)(\boldsymbol{x}-\boldsymbol{y})(\boldsymbol{x}-\boldsymbol{y})^\top,
\quad r=\|\boldsymbol{x}-\boldsymbol{y}\|.
\]
To simplify the presentation, we parameterize all kernels by the choice of $\beta(r)$. Once $\beta$ is fixed, $\alpha(r)$ is uniquely determined by the div-free (or curl-free) constraint, so that each choice yields an admissible kernel. We consider three cases:
$$\mathbf{K}^{(0)}: \beta(r)=\phi(r);\quad \mathbf{K}^{(1)}: \beta(r)=\calD\phi(r);\quad \mathbf{K}^{(2)}: \beta(r)=\calD^2\phi(r).$$
Here, $\mathbf{K}^{(0)}$ and $\mathbf{K}^{(1)}$ are proposed in this work (see \cref{exam1} and \cref{exam2}), whereas $\mathbf{K}^{(2)}$ corresponds to the classical potential-based construction commonly used in the literature.

 We consider two standard families of scalar radial kernels that satisfy the decay condition \eqref{eq:psi_hat_decay}: Mat\'ern (MA) kernels and Wendland (WE) kernels. The Mat\'ern family is given by
$$\phi_{\nu}(r) =\frac{2^{1-(\nu-d/2)}}{\Gamma(\nu-d/2)} (\varepsilon r)^{\nu-d/2}K_{\nu-d/2}(\varepsilon r),$$ where  $K_\mu$ denotes the modified Bessel function of the second kind of order $\mu$, and $\varepsilon>0$ is a shape parameter. Mat\'ern kernels are strictly positive definite on $\RRR^d$, and their smoothness is controlled by the parameter $\nu$. For example, choosing $\nu=\frac{d+3}{2}$ yields $\phi_\nu\in C^{2}(\RRR^d)$, which $\nu=\frac{d+5}{2}$ gives $\phi_\nu\in C^{4}(\RRR^d)$. As a compactly supported alternative, we also employ Wendland functions $\phi_{d,\ell}:[0,\infty)\rightarrow \R$ from \cite{Wendland_1995Adv_piecewise,Wendland_2004book_scattered}. These kernels are strictly positive definite on $\RRR^d$ and satisfy $\phi_{d,\ell}\in C^{2\ell}(\R^d)$. Moreover, their native spaces are Sobolev spaces $H^{\ell+\frac{d}{2}+\frac{1}{2}}(\R^d)$.
In our experiments, we use the specific Wendland kernel
$$\phi_{5,2}(r) = (1-\varepsilon r)_{+}^7\big(16(\varepsilon r)^2+7\varepsilon r+1\big),$$
which is strictly positive definite on $\RRR^d$ for dimensions $d\leq 5$ and belongs to $C^4(\RRR^d)$.

	\subsection{Convergence test}
	
	In this example, we test the convergence and numerical stability of the three interpolation methods for a vector field on $\RRR^2$.  Following \cite{Wendland_2009SINUM_divergence}, we consider the target field
	\begin{equation} \label{eq:target_u}
		\boldsymbol{u}(\boldsymbol{x}) = \begin{pmatrix} 20x_1x_2^3+3x_1^2-3x_2^2 \\ 5x_1^4 - 5x_2^4-6x_1x_2 \end{pmatrix}.
	\end{equation}
	This field splits naturally into a div-free part $\boldsymbol{u}_1$ and a curl-free part $\boldsymbol{u}_2$:
	\begin{equation}
		\boldsymbol{u}_1(\boldsymbol{x}) = \begin{pmatrix} 20x_1x_2^3 \\ 5x_1^4 - 5x_2^4 \end{pmatrix}, \quad \boldsymbol{u}_2(\boldsymbol{x}) = \begin{pmatrix} 3x_1^2-3x_2^2 \\ -6x_1x_2 \end{pmatrix}.
	\end{equation}
	
	We first test uniform node sets $X \subset \Omega$ with fill distances $h_X \in \{0.4, 0.2, 0.1, 0.05, 0.025\}$. Errors are measured in the discrete $\ell_2$-norm on a dense uniform evaluation grid $Y$ with mesh size $h_Y=0.01$, namely
	\[
\mathcal{E}(X) := \|\boldsymbol{u}- I_X \boldsymbol{u}\|_{\bm{\ell}_2(Y)}.
	\]
	Interpolation is performed using the Mat\'ern kernel $\phi_{7/2}$ and the Wendland kernel $\phi_{5,2}$. In all runs the shape parameters are held fixed (no tuning is performed).  For the Mat\'ern kernel, we take $\varepsilon_1 = 2.7$, $\varepsilon_2 = 3$ and $\varepsilon_3= 3.5$; for the Wendland kernel we use $\varepsilon_1 = 0.37$, $\varepsilon_2 = 0.5$ and $\varepsilon_3= 0.6$.

	\cref{fig.Err_Field1} displays the interpolation errors for the full field $\bu$ and for its orthogonal components using three methods with both the Mat\'{e}rn and Wendland kernels. \cref{tab:conv_rates} summarizes the corresponding average convergence orders, which are calculated via a least-squares linear fit of the log-log error profiles from the coarsest to the finest point sets. The average convergence orders in \cref{tab:conv_rates} agree with the direct estimates proved in \Cref{sec:error}: $\mathbf{K}^{(0)}$ achieves the fastest asymptotic decay at approximately $\calO(h^{4.5})$, followed by $\mathbf{K}^{(1)}$ at $\calO(h^{3.5})$ and $\mathbf{K}^{(2)}$ at $\calO(h^{2.5})$. Furthermore, to assess robustness with respect to nonuniform sampling, we repeat the experiment on Halton points. With the same shape parameters, \cref{fig.Err_Field2} shows essentially the same convergence behavior, which indicates that the convergences persist for scattered data.  
	
	Next, we investigate the numerical stability of the three methods by calculating the minimum eigenvalues of the interpolation matrices using the Mat\'{e}rn kernel $\phi_{7/2}$ (see \cref{fig.cond1}). We observe that the decay rates with respect to the separation distances $q_X$ are $\mathcal{O}(q_X^7)$ for $\mathbf{K}^{(0)}$, $\mathcal{O}(q_X^5)$ for $\mathbf{K}^{(1)}$ and $\mathcal{O}(q_X^3)$ for $\mathbf{K}^{(2)}$. These rates perfectly corroborate the theoretical estimate $q_X^{2m-d-2k+2}$ provided in \cref{thm:min_eig} for $m=7/2$ and $d=2$. This phenomenon is also supported by Schaback’s uncertainty relation, which asserts that the spectral norm of the inverse of an interpolation matrix is inversely proportional to the approximation order \cite{Schaback_1995Adv_error}. Furthermore, we plot the minimum eigenvalues for $\mathbf{K}^{(0)}$ using the Wendland kernels $\phi_{3,2}$, $\phi_{5,2}$ and $\phi_{7,2}$ with the same shape parameter $\varepsilon=0.37$. Note that for uniform nodes at $h = 0.025$, the minimum eigenvalues associated with $\phi_{3,2}$ and $\phi_{5,2}$ become negative; hence, these data points are excluded from the figure. Additionally, for the Halton point set of $N = 1600$, the eigenvalues corresponding to $\phi_{3,2}$ also become negative. This behavior is entirely consistent with our theoretical framework, which dictates that $\mathbf{K}^{(0)}$ necessitates a kernel that is strictly positive definite in $\R^{d+4}$.

	\begin{figure}
		\centering
		\begin{tikzpicture}
			\begin{groupplot}[
				group style={
					group size=3 by 1, 
					horizontal sep=0pt, 
					vertical sep=0pt,
				},
				width=4cm, height=5cm, 
				scale only axis,           
				xmode=log, ymode=log,
				xmin=0.006, xmax=2,
				ymin=1e-7, ymax=10,
				grid=both,
				major grid style={line width=0.2pt, draw=gray!40, dashed},
				minor grid style={line width=0.1pt, draw=gray!15, dotted},
				minor x tick num=3, minor y tick num=3,
				ticklabel style={font=\tiny},
				xlabel={$h$},
				xlabel style={font=\small, yshift=6pt},
				legend style={
					at={(1,0)}, 
					anchor=south east,
					font=\tiny,
                    nodes={inner ysep=1pt},
					cells={anchor=west},
					legend columns=1,
					inner sep=1pt,
					outer sep=1pt,
					draw=none,
					fill opacity=0.8,
					text opacity=1
				},
				legend image post style={mark size=1.8pt},
				cycle list={
					{color={myred}, mark=triangle*, line width=0.5pt,mark size=1.5pt},
					{color={myblue}, mark=square*, line width=0.5pt,mark size=1.5pt},
					{color={mygreen}, mark=diamond*, line width=0.5pt,mark size=1.5pt},
					{color={myred}, mark=triangle*, line width=0.5pt,mark size=1.5pt,dashed},
					{color={myblue}, mark=square*, line width=0.5pt,mark size=1.5pt,dashed},
					{color={mygreen}, mark=diamond*, line width=0.5pt,mark size=1.5pt,dashed}
				},
				]
				
				\nextgroupplot[
				ylabel={$\ell_2$ error}, 
				title={combined},
				title style={font=\large, yshift=-3pt},
				]
				
				\addplot table[x index=0, y index=2, col sep=space] {field1comma.txt};
				\addlegendentry{MA, $\mathbf{K}^{(0)}$}
				
				\addplot table[x index=0, y index=3, col sep=space] {field1comma.txt};
				\addlegendentry{MA, $\mathbf{K}^{(1)}$}
                
				\addplot table[x index=0, y index=1, col sep=space] {field1comma.txt};
				\addlegendentry{MA, $\mathbf{K}^{(2)}$}
				
				\addplot table[x index=0, y index=2, col sep=space] {field1comwe52.txt};
				\addlegendentry{WE, $\mathbf{K}^{(0)}$}
				
				\addplot table[x index=0, y index=3, col sep=space] {field1comwe52.txt};
				\addlegendentry{WE, $\mathbf{K}^{(1)}$}
                
				\addplot table[x index=0, y index=1, col sep=space] {field1comwe52.txt};
				\addlegendentry{WE, $\mathbf{K}^{(2)}$}
				
				\addplot[
				domain=0.03:0.4,
				samples=2,
				color=black,
				line width=0.5pt,  %
				dashdotted,
				forget plot,
				] {20 * x^4.5};
				\node[anchor=south west, font=\tiny] at (axis cs:0.2,0.008) {$h^{4.5}$};
				\addplot[
				domain=0.025:0.3,
				samples=2,
				color=black,
				line width=0.5pt,  %
				dashdotted,
				forget plot,
				] {45 * x^2.5};
				\node[anchor=south west, font=\tiny] at (axis cs:0.04,0.1) {$h^{2.5}$};

				\nextgroupplot[
				title={div-free},
				title style={font=\large, yshift=-3pt},
				yticklabels={},                  
				ylabel={},
				axis y line*=right, 
				]

				\addplot table[x index=0, y index=2, col sep=space] {field1divma.txt};
				\addlegendentry{MA, $\mathbf{K}^{(0)}$}
				
				\addplot table[x index=0, y index=3, col sep=space] {field1divma.txt};
				\addlegendentry{MA, $\mathbf{K}^{(1)}$}
                
				\addplot table[x index=0, y index=1, col sep=space] {field1divma.txt};
				\addlegendentry{MA, $\mathbf{K}^{(2)}$}
				
				\addplot table[x index=0, y index=2, col sep=space] {field1divwe52.txt};
				\addlegendentry{WE, $\mathbf{K}^{(0)}$}
				
				\addplot table[x index=0, y index=3, col sep=space] {field1divwe52.txt};
				\addlegendentry{WE, $\mathbf{K}^{(1)}$}
                
				\addplot table[x index=0, y index=1, col sep=space] {field1divwe52.txt};
				\addlegendentry{WE, $\mathbf{K}^{(2)}$}
				
				\addplot[
				domain=0.03:0.4,
				samples=2,
				color=black,
				line width=0.5pt,  %
				dashdotted,
				forget plot,
				] {20 * x^4.5};
				\node[anchor=south west, font=\tiny] at (axis cs:0.2,0.008) {$h^{4.5}$};
				\addplot[
				domain=0.025:0.3,
				samples=2,
				color=black,
				line width=0.5pt,  %
				dashdotted,
				forget plot,
				] {45 * x^2.5};
				\node[anchor=south west, font=\tiny] at (axis cs:0.04,0.1) {$h^{2.5}$};
				
				%

				\nextgroupplot[
				title={curl-free},
				title style={font=\large, yshift=-3pt},
				yticklabels={},                 
				ylabel={},
				axis y line*=right, 
				]
				
				\addplot table[x index=0, y index=2, col sep=space] {field1curlma.txt};
				\addlegendentry{MA, $\mathbf{K}^{(0)}$}
				
				\addplot table[x index=0, y index=3, col sep=space] {field1curlma.txt};
				\addlegendentry{MA, $\mathbf{K}^{(1)}$}
                
				\addplot table[x index=0, y index=1, col sep=space] {field1curlma.txt};
				\addlegendentry{MA, $\mathbf{K}^{(2)}$}
				
				\addplot table[x index=0, y index=2, col sep=space] {field1curlwe52.txt};
				\addlegendentry{WE, $\mathbf{K}^{(0)}$}
				
				\addplot table[x index=0, y index=3, col sep=space] {field1curlwe52.txt};
				\addlegendentry{WE, $\mathbf{K}^{(1)}$}
                
				\addplot table[x index=0, y index=1, col sep=space] {field1curlwe52.txt};
				\addlegendentry{WE, $\mathbf{K}^{(2)}$}
				
				\addplot[
				domain=0.03:0.4,
				samples=2,
				color=black,
				line width=0.5pt,  %
				dashdotted,
				forget plot,
				] {5 * x^4.5};
				\node[anchor=south west, font=\tiny] at (axis cs:0.2,0.0025) {$h^{4.5}$};
				\addplot[
				domain=0.025:0.3,
				samples=2,
				color=black,
				line width=0.5pt,  %
				dashdotted,
				forget plot,
				] {10 * x^2.5};
				\node[anchor=south west, font=\tiny] at (axis cs:0.06,0.05) {$h^{2.5}$};
				
				%
				
			\end{groupplot}
		\end{tikzpicture}
		
		\captionsetup{font=normalsize}
		\caption{Discrete $\ell_2$ errors for the vector field $\boldsymbol{u}$ and its div-free and curl-free components $\boldsymbol{u}_1$ and $\boldsymbol{u}_2$, computed with matrix-valued kernel interpolation using Mat\'{e}rn and Wendland kernels on \textbf{uniform nodes} with $h=\{0.4,0.2,0.1,0.05,0.025\}$.}
		\label{fig.Err_Field1}
	\end{figure}

	\begin{table}
		\centering
		\caption{Averaged convergence orders of three cases ($\mathbf{K}^{(0)}$, $\mathbf{K}^{(1)}$, $\mathbf{K}^{(2)}$) for approximating $\bld{u}$, $\bld{u}_1$ and $\bld{u}_2$ on uniform nodes using Mat\'{e}rn and Wendland kernels.}
		\label{tab:conv_rates}
		\setlength{\tabcolsep}{4mm}
		\begin{tabular}{lccccc}
			\hline
			Kernel & Case  & $\boldsymbol{u}$ & $\boldsymbol{u}_1$ & $\boldsymbol{u}_2$& Theoretical rate \\ \hline
			& $\mathbf{K}^{(0)}$  & 4.50 & 4.49 & 4.55 & 4.5\\
			Mat\'{e}rn & $\mathbf{K}^{(1)}$  &3.53 & 3.52 & 3.57& 3.5 \\
            & $\mathbf{K}^{(2)}$  & 2.55 & 2.53 & 2.51 & 2.5\\ \hline
			& $\mathbf{K}^{(0)}$  & 4.51 & 4.50 & 4.59 & 4.5\\
			Wendland & $\mathbf{K}^{(1)}$  & 3.49 & 3.48 & 3.52& 3.5 \\
            & $\mathbf{K}^{(2)}$  & 2.53 & 2.50 & 2.54 & 2.5\\ \hline
		\end{tabular}
	\end{table}

\begin{figure}
	\centering
	\begin{tikzpicture}
		\begin{groupplot}[
			group style={
				group size=3 by 1, 
				horizontal sep=0pt, 
				vertical sep=0pt,
			},
			width=4cm, height=5cm, 
			scale only axis,           
			xmode=log, ymode=log,
			xmin=0.006, xmax=2,
			ymin=1e-6, ymax=10,
			grid=both,
			major grid style={line width=0.2pt, draw=gray!40, dashed},
			minor grid style={line width=0.1pt, draw=gray!15, dotted},
			minor x tick num=3, minor y tick num=3,
			ticklabel style={font=\tiny},
			xlabel={$h$},
			xlabel style={font=\small, yshift=6pt},
			legend style={
				at={(1,0)}, 
				anchor=south east,
				font=\tiny,
                nodes={inner ysep=1pt},
				cells={anchor=west},
				legend columns=1,
				inner sep=1pt,
				outer sep=1pt,
				draw=none,
				fill opacity=0.8,
				text opacity=1
			},
			legend image post style={mark size=1.8pt},
			cycle list={
				{color={myred}, mark=triangle*, line width=0.5pt,mark size=1.5pt},
				{color={myblue}, mark=square*, line width=0.5pt,mark size=1.5pt},
				{color={mygreen}, mark=diamond*, line width=0.5pt,mark size=1.5pt},
				{color={myred}, mark=triangle*, line width=0.5pt,mark size=1.5pt,dashed},
				{color={myblue}, mark=square*, line width=0.5pt,mark size=1.5pt,dashed},
				{color={mygreen}, mark=diamond*, line width=0.5pt,mark size=1.5pt,dashed}
			},
			]
			
			\nextgroupplot[
			ylabel={$\ell_2$ error}, 
			title={combined},
			title style={font=\large, yshift=-3pt},
			]
			
			\addplot table[x index=0, y index=2, col sep=space] {haltoncomma.txt};
			\addlegendentry{MA, $\mathbf{K}^{(0)}$}
			
			\addplot table[x index=0, y index=3, col sep=space] {haltoncomma.txt};
			\addlegendentry{MA, $\mathbf{K}^{(1)}$}
            
			\addplot table[x index=0, y index=1, col sep=space] {haltoncomma.txt};
			\addlegendentry{MA, $\mathbf{K}^{(2)}$}
			
			\addplot table[x index=0, y index=2, col sep=space] {haltoncomwe52.txt};
			\addlegendentry{WE, $\mathbf{K}^{(0)}$}
			
			\addplot table[x index=0, y index=3, col sep=space] {haltoncomwe52.txt};
			\addlegendentry{WE, $\mathbf{K}^{(1)}$}
            
			\addplot table[x index=0, y index=1, col sep=space] {haltoncomwe52.txt};
			\addlegendentry{WE, $\mathbf{K}^{(2)}$}
			
			\addplot[
			domain=0.04:0.3,
			samples=2,
			color=black,
			line width=0.5pt,  %
			dashdotted,
			forget plot,
			] {70 * x^4.5};
			\node[anchor=south west, font=\tiny] at (axis cs:0.2,0.018) {$h^{4.5}$};
			\addplot[
			domain=0.025:0.2,
			samples=2,
			color=black,
			line width=0.5pt,  %
			dashdotted,
			forget plot,
			] {200 * x^2.5};
			\node[anchor=south west, font=\tiny] at (axis cs:0.04,0.7) {$h^{2.5}$};

			\nextgroupplot[
			title={div-free},
			title style={font=\large, yshift=-3pt},
			yticklabels={},                  
			ylabel={},
			axis y line*=right, 
			]

			\addplot table[x index=0, y index=2, col sep=space] {haltondivma.txt};
			\addlegendentry{MA, $\mathbf{K}^{(0)}$}
			
			\addplot table[x index=0, y index=3, col sep=space] {haltondivma.txt};
			\addlegendentry{MA, $\mathbf{K}^{(1)}$}
            
			\addplot table[x index=0, y index=1, col sep=space] {haltondivma.txt};
			\addlegendentry{MA, $\mathbf{K}^{(2)}$}
			
			\addplot table[x index=0, y index=2, col sep=space] {haltondivwe52.txt};
			\addlegendentry{WE, $\mathbf{K}^{(0)}$}
			
			\addplot table[x index=0, y index=3, col sep=space] {haltondivwe52.txt};
			\addlegendentry{WE, $\mathbf{K}^{(1)}$}
            
			\addplot table[x index=0, y index=1, col sep=space] {haltondivwe52.txt};
			\addlegendentry{WE, $\mathbf{K}^{(2)}$}
			
			\addplot[
			domain=0.04:0.3,
			samples=2,
			color=black,
			line width=0.5pt,  %
			dashdotted,
			forget plot,
			] {70 * x^4.5};
			\node[anchor=south west, font=\tiny] at (axis cs:0.2,0.018) {$h^{4.5}$};
			\addplot[
			domain=0.025:0.2,
			samples=2,
			color=black,
			line width=0.5pt,  %
			dashdotted,
			forget plot,
			] {200 * x^2.5};
			\node[anchor=south west, font=\tiny] at (axis cs:0.04,0.7) {$h^{2.5}$};
			
			%

			\nextgroupplot[
			title={curl-free},
			title style={font=\large, yshift=-3pt},
			yticklabels={},                 
			ylabel={},
			axis y line*=right, 
			]
			
			\addplot table[x index=0, y index=2, col sep=space] {haltoncurlma.txt};
			\addlegendentry{MA, $\mathbf{K}^{(0)}$}
			
			\addplot table[x index=0, y index=3, col sep=space] {haltoncurlma.txt};
			\addlegendentry{MA, $\mathbf{K}^{(1)}$}
            
			\addplot table[x index=0, y index=1, col sep=space] {haltoncurlma.txt};
			\addlegendentry{MA, $\mathbf{K}^{(2)}$}
			
			\addplot table[x index=0, y index=2, col sep=space] {haltoncurlwe52.txt};
			\addlegendentry{WE, $\mathbf{K}^{(0)}$}
			
			\addplot table[x index=0, y index=3, col sep=space] {haltoncurlwe52.txt};
			\addlegendentry{WE, $\mathbf{K}^{(1)}$}
            
			\addplot table[x index=0, y index=1, col sep=space] {haltoncurlwe52.txt};
			\addlegendentry{WE, $\mathbf{K}^{(2)}$}
			
			\addplot[
			domain=0.03:0.3,
			samples=2,
			color=black,
			line width=0.5pt,  %
			dashdotted,
			forget plot,
			] {20 * x^4.5};
			\node[anchor=south west, font=\tiny] at (axis cs:0.2,0.008) {$h^{4.5}$};
			\addplot[
			domain=0.025:0.2,
			samples=2,
			color=black,
			line width=0.5pt,  %
			dashdotted,
			forget plot,
			] {70 * x^2.5};
			\node[anchor=south west, font=\tiny] at (axis cs:0.06,0.3) {$h^{2.5}$};
			
			%
			
		\end{groupplot}
	\end{tikzpicture}
	
	\captionsetup{font=normalsize}
	\caption{Discrete $\ell_2$ errors for the vector field $\boldsymbol{u}$ and its div-free and curl-free components $\boldsymbol{u}_1$ and $\boldsymbol{u}_2$, computed with matrix-valued kernel interpolation using Mat\'{e}rn and Wendland kernels on \textbf{Halton nodes} with $N=\{6,25,100,400,1600\}$ (with $h\sim 1/\sqrt{N}$).}
	\label{fig.Err_Field2}
\end{figure}

	\begin{figure}
		\centering
		\begin{tikzpicture}
			\begin{groupplot}[
				group style={
					group size=2 by 1, 
					horizontal sep=60pt, 
					vertical sep=0pt,
				},
				width=4.5cm, height=6cm, 
				scale only axis,           
				xmode=log, ymode=log,
				xmin=0.006, xmax=2,
				ymin=1e-15, ymax=100,
				grid=both,
				major grid style={line width=0.2pt, draw=gray!40, dashed},
				minor grid style={line width=0.1pt, draw=gray!15, dotted},
				minor x tick num=3, minor y tick num=3,
				ticklabel style={font=\tiny},
				xlabel={$h$},
				xlabel style={font=\small, yshift=6pt},
				legend style={
					at={(1,0)}, 
					anchor=south east,
					font=\tiny,
					cells={anchor=west},
					legend columns=1,
					inner sep=2pt,
					outer sep=1pt,
					draw=none,
					fill opacity=0.8,
					text opacity=1
				},
				legend image post style={mark size=1.8pt},
				cycle list={
					{color={myred}, mark=triangle*, line width=0.8pt,mark size=2.5pt},
					{color={myblue}, mark=square*, line width=0.8pt,mark size=1.5pt},
					{color={mygreen}, mark=diamond*, line width=0.8pt,mark size=2.5pt}
				},
				]
				
				\nextgroupplot[
				ylabel={\small Minimum eigenvalue}, 
				title={Uniform nodes},
				title style={font=\large, yshift=-3pt},
				]
				
				\addplot table[x index=0, y index=5, col sep=space] {field1divma.txt};
				\addlegendentry{$\mathbf{K}^{(0)}$}
				
				\addplot table[x index=0, y index=6, col sep=space] {field1divma.txt};
				\addlegendentry{$\mathbf{K}^{(1)}$}
                
				\addplot table[x index=0, y index=4, col sep=space] {field1divma.txt};
				\addlegendentry{$\mathbf{K}^{(2)}$}

                \addplot[
			domain=0.04:0.5,
			samples=2,
			color=black,
			line width=0.5pt,  %
			dashdotted,
			forget plot,
			] {0.001 * x^7};
			\node[anchor=south west, font=\tiny] at (axis cs:0.2,1e-9) {$q_X^{7}$};
			\addplot[
			domain=0.025:0.3,
			samples=2,
			color=black,
			line width=0.5pt,  %
			dashdotted,
			forget plot,
			] {30 * x^3};
			\node[anchor=south west, font=\tiny] at (axis cs:0.08,0.1) {$q_X^{3}$};

				\nextgroupplot[
                    title={Halton nodes},
                    title style={font=\large, yshift=-3pt},
                    ylabel={\small Minimum eigenvalue},
                ]

				\addplot table[x index=0, y index=5, col sep=space] {haltondivma_mineig.txt};
				\addlegendentry{$\mathbf{K}^{(0)}$}
				
				\addplot table[x index=0, y index=6, col sep=space] {haltondivma_mineig.txt};
				\addlegendentry{$\mathbf{K}^{(1)}$}
                
				\addplot table[x index=0, y index=4, col sep=space] {haltondivma_mineig.txt};
				\addlegendentry{$\mathbf{K}^{(2)}$}

                \addplot[
			domain=0.04:0.5,
			samples=2,
			color=black,
			line width=0.5pt,  %
			dashdotted,
			forget plot,
			] {0.001 * x^7};
			\node[anchor=south west, font=\tiny] at (axis cs:0.2,1e-9) {$q_X^{7}$};
			\addplot[
			domain=0.025:0.3,
			samples=2,
			color=black,
			line width=0.5pt,  %
			dashdotted,
			forget plot,
			] {30 * x^3};
			\node[anchor=south west, font=\tiny] at (axis cs:0.08,0.1) {$q_X^{3}$};

			\end{groupplot}
		\end{tikzpicture}
		
		\captionsetup{font=normalsize}
		\caption{Minimum eigenvalues of div-free interpolation matrices using Mat\'ern kernel $\phi_{7/2}$ on uniform and Halton nodes for three cases.}
		\label{fig.cond1}
	\end{figure}

\begin{figure}
		\centering
		\begin{tikzpicture}
			\begin{groupplot}[
				group style={
					group size=2 by 1, 
					horizontal sep=60pt, 
					vertical sep=0pt,
				},
				width=4.5cm, height=6cm, 
				scale only axis,           
				xmode=log, ymode=log,
				xmin=0.006, xmax=2,
				ymin=1e-14, ymax=1e-1,
				grid=both,
				major grid style={line width=0.2pt, draw=gray!40, dashed},
				minor grid style={line width=0.1pt, draw=gray!15, dotted},
				minor x tick num=3, minor y tick num=3,
				ticklabel style={font=\tiny},
				xlabel={$h$},
				xlabel style={font=\small, yshift=6pt},
				legend style={
					at={(1,0)}, 
					anchor=south east,
					font=\tiny,
					cells={anchor=west},
					legend columns=1,
					inner sep=2pt,
					outer sep=1pt,
					draw=none,
					fill opacity=0.8,
					text opacity=1
				},
				legend image post style={mark size=1.8pt},
				cycle list={
					{color={myred}, mark=triangle*, line width=0.8pt,mark size=2.5pt},
					{color={myblue}, mark=square*, line width=0.8pt,mark size=1.5pt},
					{color={mygreen}, mark=diamond*, line width=0.8pt,mark size=2.5pt}
				},
				]
				
				\nextgroupplot[
				ylabel={\small Minimum eigenvalue}, 
				title={$\mathbf{K}^{(0)}$, uniform nodes},
				title style={font=\large, yshift=-3pt},
				]
				
				\addplot table[x index=0, y index=1, col sep=space] {diffdivWEmineig.txt};
				\addlegendentry{$\phi_{3,2}$}
				
				\addplot table[x index=0, y index=2, col sep=space] {diffdivWEmineig.txt};
				\addlegendentry{$\phi_{5,2}$}
                
				\addplot table[x index=0, y index=3, col sep=space] {diffdivWEmineig.txt};
				\addlegendentry{$\phi_{7,2}$}

			\addplot[
			domain=0.05:0.6,
			samples=2,
			color=black,
			line width=0.5pt,  %
			dashdotted,
			forget plot,
			] {0.0001 * x^7};
			\node[anchor=south west, font=\tiny] at (axis cs:0.25,1e-9) {$q_X^{7}$};

				\nextgroupplot[
                    title={ $\mathbf{K}^{(0)}$, Halton nodes},
                    title style={font=\large, yshift=-3pt},
                    ylabel={\small Minimum eigenvalue},
                ]

				\addplot table[x index=0, y index=4, col sep=space] {diffdivWEmineig.txt};
				\addlegendentry{$\phi_{3,2}$}
				
				\addplot table[x index=0, y index=5, col sep=space] {diffdivWEmineig.txt};
				\addlegendentry{$\phi_{5,2}$}
                
				\addplot table[x index=0, y index=6, col sep=space] {diffdivWEmineig.txt};
				\addlegendentry{$\phi_{7,2}$}

                \addplot[
			domain=0.05:0.6,
			samples=2,
			color=black,
			line width=0.5pt,  %
			dashdotted,
			forget plot,
			] {0.0001 * x^7};
			\node[anchor=south west, font=\tiny] at (axis cs:0.25,1e-9) {$q_X^{7}$};
			
			\end{groupplot}
		\end{tikzpicture}
		
		\captionsetup{font=normalsize}
		\caption{Minimum eigenvalues of interpolation matrices for the div-free component $\boldsymbol{u}_1$  using Wendland kernels $\phi_{3,2}$, $\phi_{5,2}$, and $\phi_{7,2}$ with the same shape parameter $\epsilon = 0.37$ on uniform nodes and Halton nodes. }
		\label{fig.mineig}
	\end{figure}

	\subsection{Simulation of a div-free field}
	
	To further evaluate the proposed methods on a flow with nontrivial topology, we consider a div-free target field defined on the square domain $\Omega=[-1.5, 1.5]^2 $. The field is generated by a stream function $\psi (\boldsymbol {x})$ characterized by multiple local extrema:
\begin{equation}\label{eq:div_free_field}
        \psi(\boldsymbol{x}) = -2g\left(\frac{27}{2}\|\boldsymbol{x}\|^4\right) - \frac{1}{2}g(27\|\boldsymbol{x}\|^2) - 2\sum_{j=0}^4 g(9\|\boldsymbol{x}-\boldsymbol{\xi}_j\|^2),
    \end{equation}
	with $g(r) = \exp(r)/(1+\exp(r))^2$ and $$\boldsymbol{\xi}_j = \big(\cos(2\pi j/5 + 0.1), \sin(2\pi j/5 + 0.1)\big)^\top.$$
	The associated div-free velocity field is obtained via the standard planar rotation of the gradient, namely $\boldsymbol{u}(\boldsymbol{x}) = (-\partial_{x_2} \psi, \partial_{x_1} \psi)^\top$.
	
	We discretize $\Omega$ using a uniform set of nodes $X \subset \Omega$ with fill distance $h_X = 0.04$ and we use the Mat\'{e}rn kernel of order $\nu=7/2$. \cref{fig1}(a) shows contour lines of $\psi$, and \cref{fig1}(b) displays streamlines of the target field $\boldsymbol{u}$. \cref{fig1}(c) depicts the field reconstructed by using $\mathbf{K}^{(0)}$. The reconstruction captures the main topological features of the flow with high fidelity.
    \cref{fig2} compares the pointwise approximation errors for three cases using a common scale. Overall, $\mathbf{K}^{(0)}$ yields the smallest errors, with the most pronounced improvements in regions where $\bld{u}$ varies rapidly; in these areas, $\mathbf{K}^{(0)}$ better controls the localized approximation error.
	
	\begin{figure}
		\centering
		\begin{subfigure}{0.32\textwidth}
			\centering
			\includegraphics[width=\linewidth]{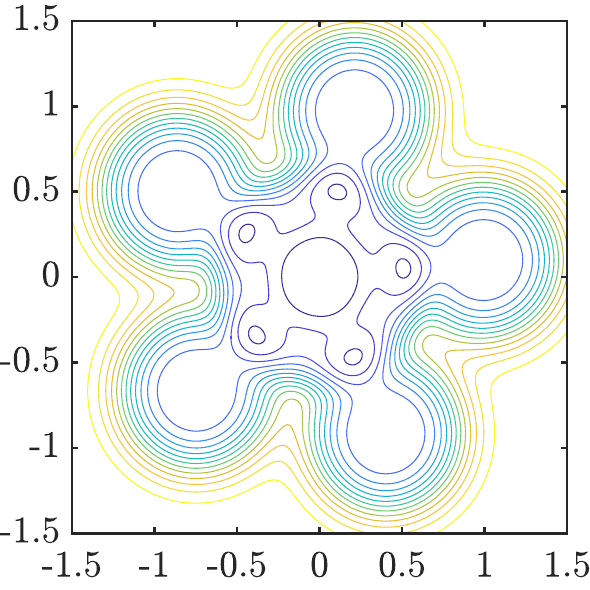}
			\caption{Contour of $\psi$}
			\label{fig:psi_contour}
		\end{subfigure}
		\begin{subfigure}{0.32\textwidth}
			\centering
			\includegraphics[width=\linewidth]{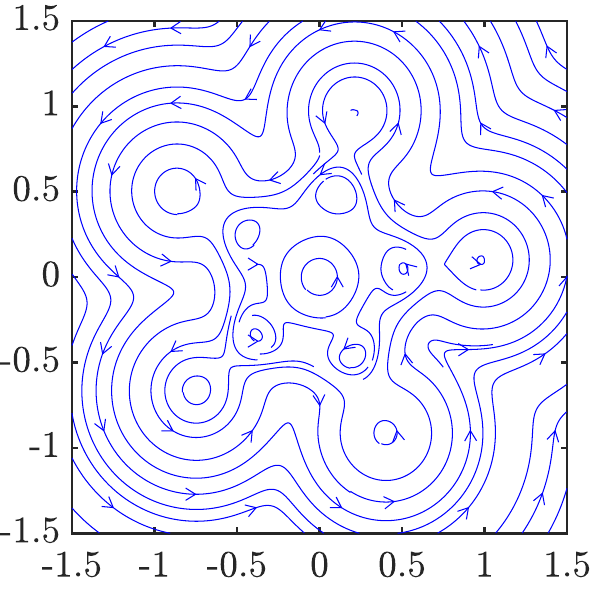}
			\caption{Streamlines of $\boldsymbol{u}$}
			\label{Figure:streamlines}
		\end{subfigure}
		\begin{subfigure}{0.32\textwidth}
			\centering
			\includegraphics[width=\linewidth]{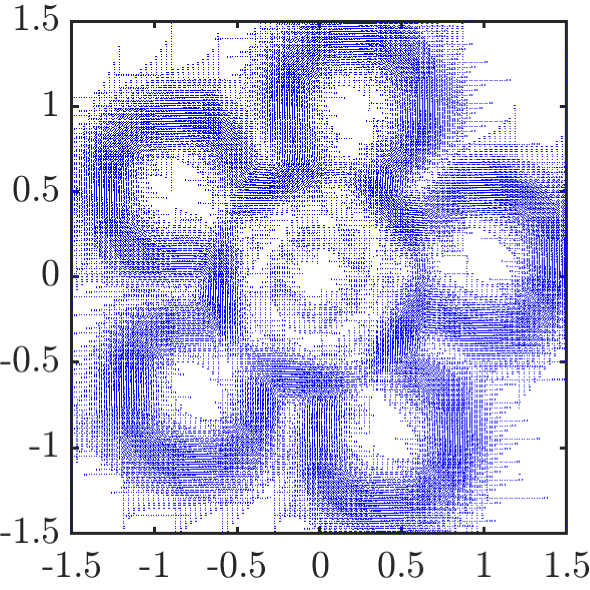}
			\caption{Reconstructed vector field}
			\label{fig:interpolated}
		\end{subfigure}
		\caption{Contour of the potential $\psi$, streamlines of the induced vector field $\boldsymbol{u}$, and the vector field reconstructed using $\mathbf{K}^{(0)}$.}
		\label{fig1}
	\end{figure}

	\begin{figure}
		\centering
        \makebox[0pt][r]{%
    \hspace{-1cm}
    \raisebox{2.7cm}{\rotatebox{90}{\small Error}}
    }
		\begin{subfigure}{0.28\textwidth}
			\centering
			\includegraphics[width=\linewidth]{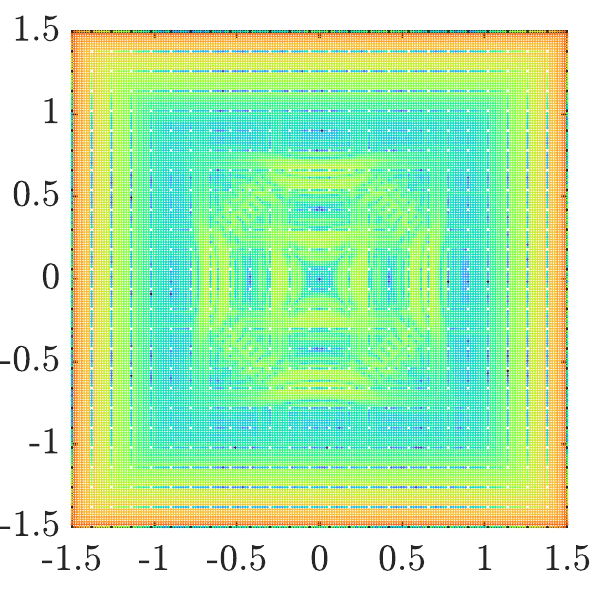}
			\caption{ $\mathbf{K}^{(0)}$}
		\end{subfigure}
        \hspace{0.005cm}
		\begin{subfigure}{0.28\textwidth}
			\centering
			\includegraphics[width=\linewidth]{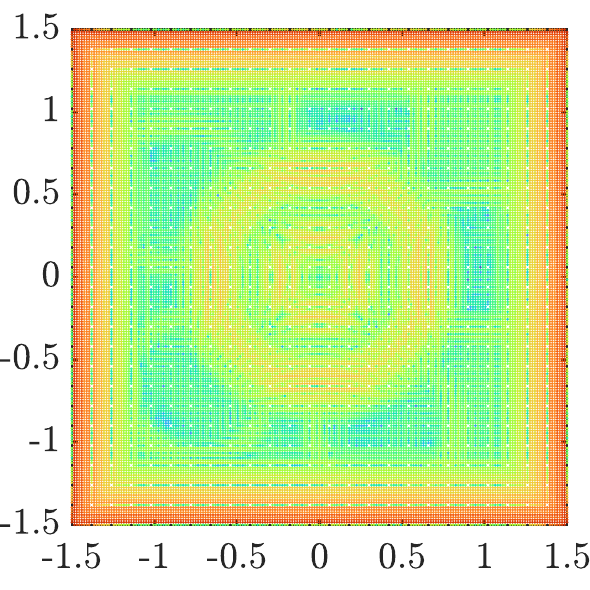}
			\caption{ $\mathbf{K}^{(1)}$}
		\end{subfigure}
		\raisebox{0.1cm}{
    \begin{subfigure}{0.365\textwidth}
	   \centering
	   \includegraphics[width=\linewidth]{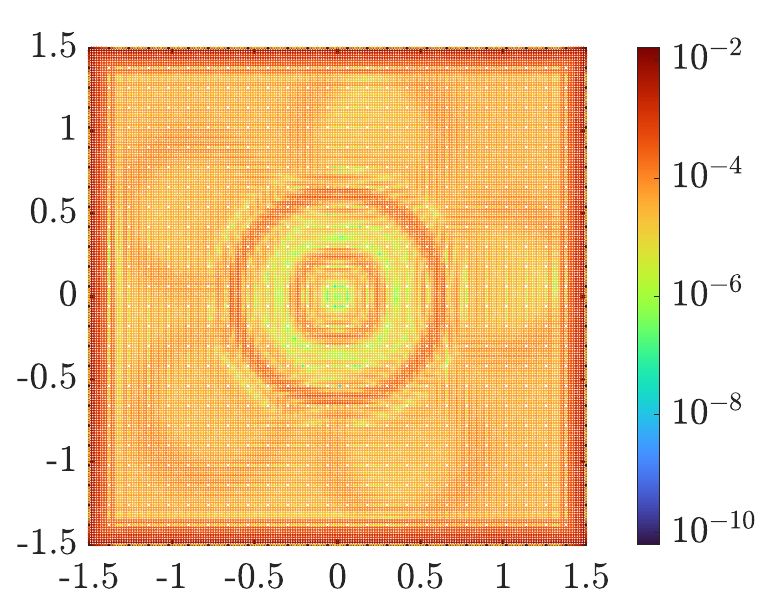}
       \vspace{-0.35cm}
	   \caption{$\mathbf{K}^{(2)}$}
    \end{subfigure}
    }
		\caption{Pointwise approximation errors for three cases in reconstructing the div-free field generated by the stream function $\psi$ given in \eqref{eq:div_free_field}.}
		\label{fig2}
	\end{figure}
	
	\subsection{Simulation on an annular domain}
	
	Finally, we consider the annulus $$\Omega = \{\boldsymbol{x} \in \mathbb{R}^2 : 0.75 \le \|\boldsymbol{x}\| \le 2\},$$
	and constructed a mixed vector field $\mathbf{f}$ by superposing a div-free component generated by the stream function $\psi(\boldsymbol{x}) = \cos(2\|\boldsymbol{x}\|^2)$ with a curl-free component given by the gradient of \textit{peaks} function $p(\boldsymbol{x})$. Specifically, 
\begin{equation}\label{eq:vec_field_annulus}
        \mathbf{f} = \begin{pmatrix} -\partial_{x_2} \psi \\ \partial_{x_1} \psi \end{pmatrix} + \nabla p.
    \end{equation}
	
	To approximate $\mathbf{f}$ and to recover its div-free and curl-free parts, we use a Mat\'ern kernel with smoothness $\nu=5$ and shape parameter $\varepsilon=4$, and we compare three cases. The discretization uses $N=1246$ interpolation centers and $M=2794$ evaluation points, both distributed uniformly in $\Omega$. \cref{fig3} visualizes the full field $\mathbf{f}$ via streamlines and shows contour plots of the scalar potentials $\psi$ and $p$ associated with its div-free and curl-free components, respectively. \cref{fig4} presents the reconstructed vector fields. All three cases capture the dominant flow patterns across the annulus, including near the curved boundaries. \cref{fig5} reports pointwise errors for the full field; the div-free and curl-free components shows the same qualitative behavior. In all cases, $\mathbf{K}^{(0)}$ delivers the smallest errors among the three approaches, which further verifies the theoretical results.

	\begin{figure}
		\centering
		\begin{subfigure}{0.32\textwidth}
			\centering
			\includegraphics[width=\linewidth]{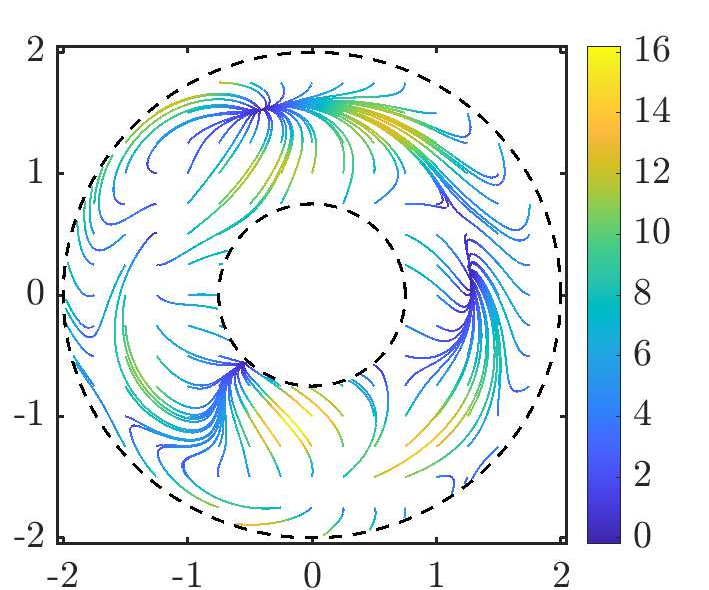}
			\caption{Streamlines of $\mathbf{f}$}
		\end{subfigure}
		\hfill
		\begin{subfigure}{0.32\textwidth}
			\centering
			\includegraphics[width=\linewidth]{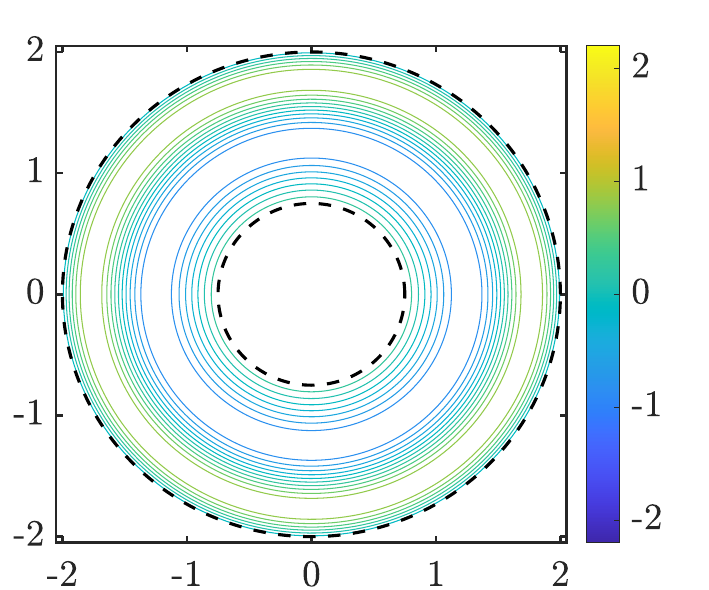}
			\caption{Contour of $\psi$}
		\end{subfigure}
		\hfill
		\begin{subfigure}{0.32\textwidth}
			\centering
			\includegraphics[width=\linewidth]{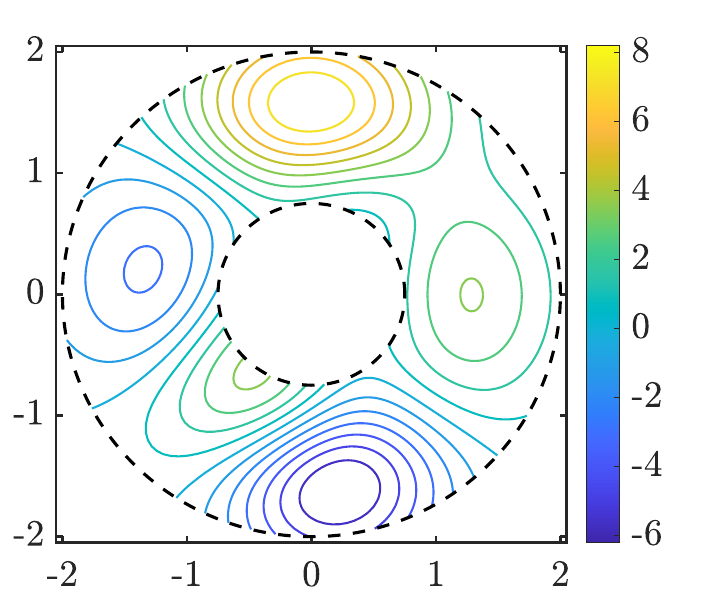}
			\caption{Contour of $p$}
		\end{subfigure}
		\caption{Target field \eqref{eq:vec_field_annulus} in the annulus: (a) streamlines of the field; (b)–(c) scalar potentials of its decomposed components.}
		\label{fig3}
	\end{figure}

	\begin{figure}
		\centering
		\begin{subfigure}{0.32\textwidth}
			\centering
			\includegraphics[width=\linewidth]{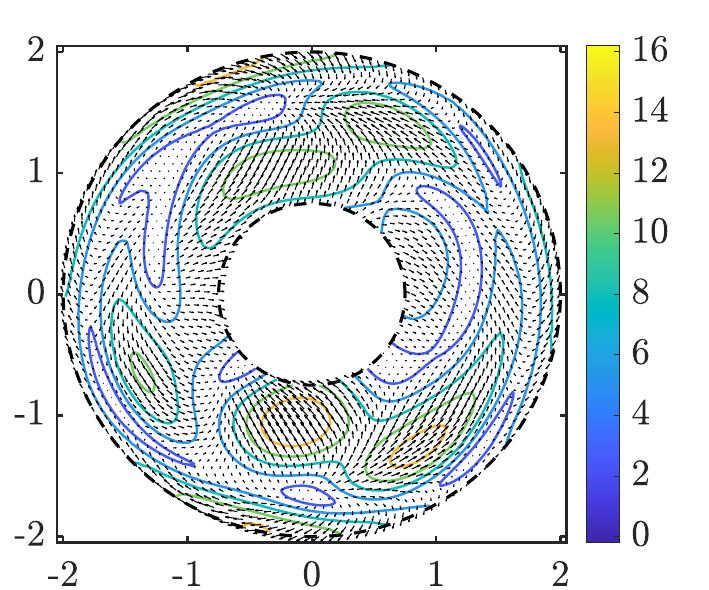}
			\caption{Interpolated $\mathbf{f}$}
		\end{subfigure}
        \hspace{0.02cm}
		\begin{subfigure}{0.32\textwidth}
			\centering
			\includegraphics[width=\linewidth]{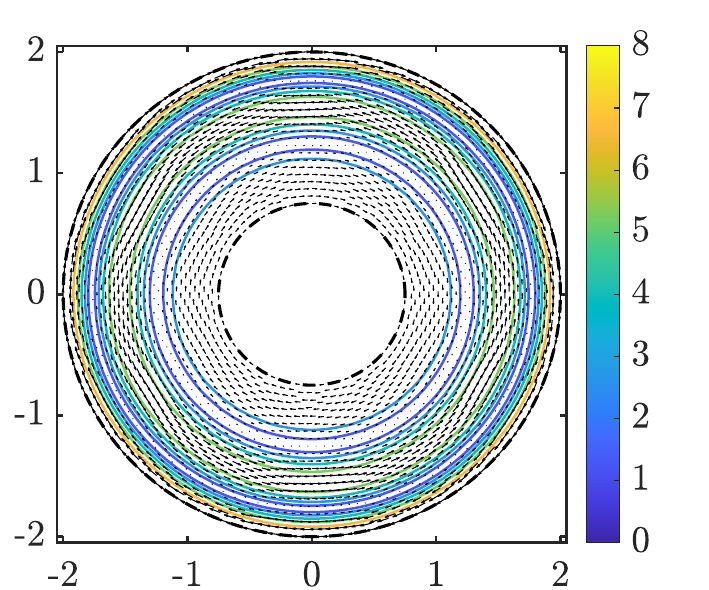}
			\caption{Div-free component}
		\end{subfigure}
        \hspace{0.08cm}
		\begin{subfigure}{0.32\textwidth}
			\centering
			\includegraphics[width=\linewidth]{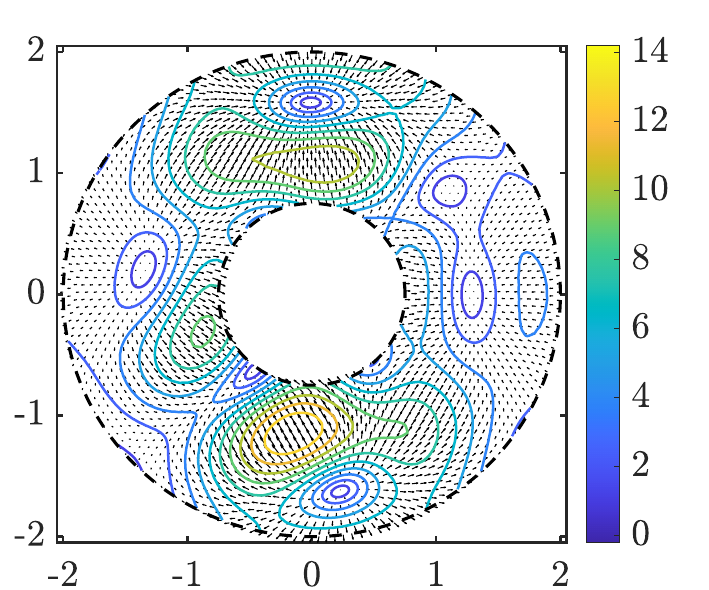}
			\caption{Curl-free component}
		\end{subfigure}
		\caption{Contour plots for matrix-valued kernel interpolation of $\mathbf{f}$ and its two components using $\mathbf{K}^{(0)}$.}
		\label{fig4}
	\end{figure}

	\begin{figure}
		\centering
        \makebox[0pt][r]{%
    \hspace{-1.2cm}
    \raisebox{2.7cm}{\rotatebox{90}{\small Error}}
    }
        \hspace{-0.2cm}
		\begin{subfigure}{0.28\textwidth}
			\centering
			\includegraphics[width=\linewidth]{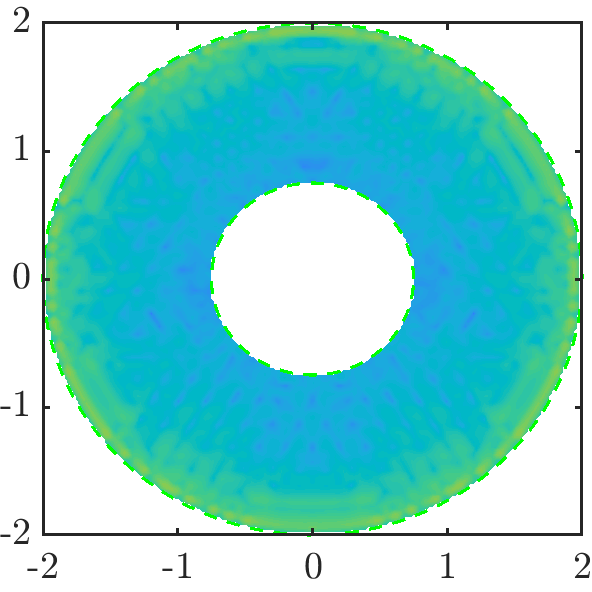}
			\caption{$\mathbf{K}^{(0)}$}
		\end{subfigure}
        \hspace{0.26cm}
		\begin{subfigure}{0.28\textwidth}
			\centering
			\includegraphics[width=\linewidth]{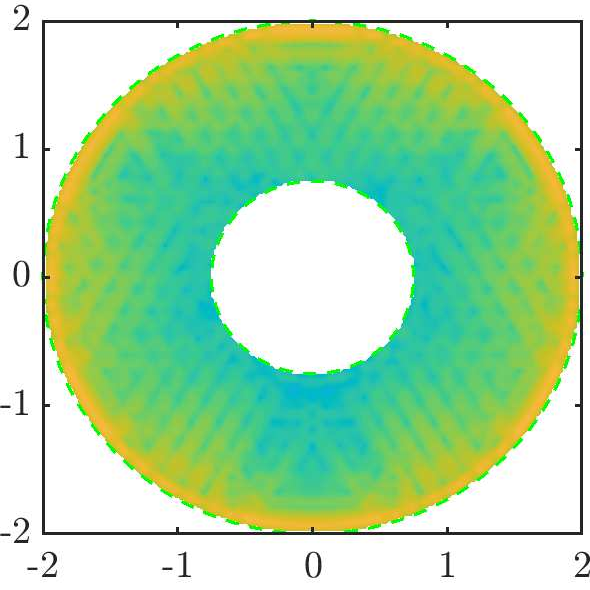}
			\caption{$\mathbf{K}^{(1)}$}
		\end{subfigure}
        \hspace{0.24cm}
        \raisebox{0.05cm}{
		\begin{subfigure}{0.336\textwidth}
			\centering
			\includegraphics[width=\linewidth]{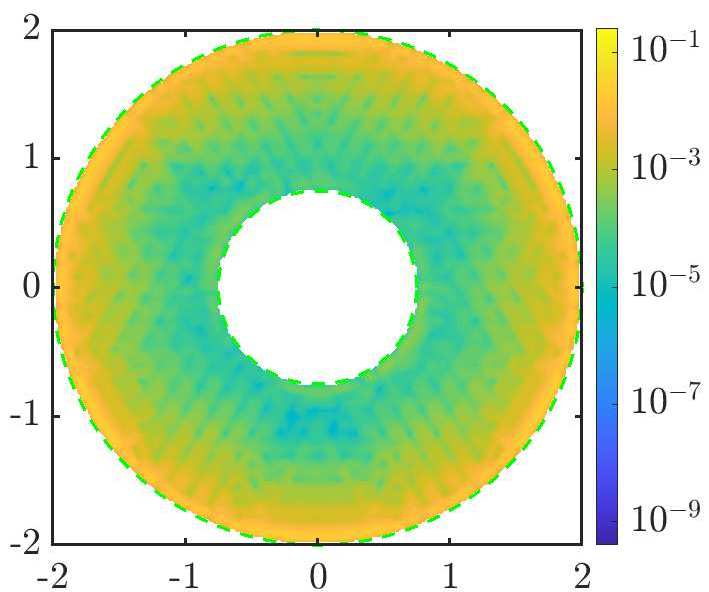}
			\caption{$\mathbf{K}^{(2)}$}
		\end{subfigure}}
		\caption{Pointwise approximation errors for three cases in approximating the combined field  \eqref{eq:vec_field_annulus} in the annulus.}
		\label{fig5}
	\end{figure}


\bibliographystyle{plain}
\bibliography{reference}

\end{document}